\theoremstyle{plain}
\newtheorem{theorem}{Theorem}[section]
\newtheorem{corollary}[theorem]{Corollary}
\newtheorem{lemma}[theorem]{Lemma}
\theoremstyle{definition}
\newtheorem{example}{Example}
\newtheorem{Remark}[theorem]{Remark}
\newcommand\bx{\boldsymbol{x}}
\newcommand\bbeta{{\boldsymbol{\beta}}}
\newcommand\calP{{\mathcal{P}}}
\newcommand\calF{\mathcal{F}}
\newcommand\frakh{\mathfrak{h}}
\title{A nodally bound-preserving finite element method for
  reaction-convection-diffusion equations}
\author{Abdolreza  Amiri$^1$ \and Gabriel R. Barrenechea$^1$ \and Tristan Pryer$^2$}
\date{
  $^1$ Department of Mathematics and Statistics, University of Strathclyde, 26
  Richmond Street, Glasgow G1 1XH, Scotland \\ \texttt{\{abdolreza.amiri, gabriel.barrenechea\}@strath.ac.uk}\\%
  $^2$Department of Mathematical Sciences, University of Bath, Claverton down,
  Bath BA2 7AY, UK \\ \texttt{tmp38@bath.ac.uk}\\[2ex]%
}
\begin{document}
\maketitle

\begin{abstract}
  This paper introduces a novel approach to approximate a broad range
  of reaction-convection-diffusion equations using conforming finite
  element methods while providing a discrete solution respecting the 
  physical bounds given by the underlying differential equation. The main result of
  this work demonstrates that the numerical solution achieves accuracy
  of $O(h^k)$ in the energy norm, where $k$ represents the underlying
  polynomial degree. To validate the approach, a series of numerical
  experiments is conducted for various problem instances.  Comparisons
  with the linear continuous interior penalty stabilised method, and
  the algebraic flux-correction scheme (for the piecewise linear
  finite element case) have been carried out, where we can observe
  the favourable performance of the current approach.
  
\end{abstract}

\section{Introduction}
\label{sec:intro}

In numerous fields of mathematical modelling, the partial differential
equations (PDEs) used to model the phenomenon in question respect the
basic laws that were used to derive them, and their solutions satisfy
these {\it first principles}. For example, in the modelling of
incompressible flows, the PDEs involved provide divergence-free
velocities, the PDEs modelling dissipative systems provide solutions
that are energy stable, and in phase-field modelling the solution
has strict global maxima and minima, just to give a few examples.
From a numerical approximation perspective, it is very desirable that
the finite element methods used to discretise these PDEs do respect
the respective laws. Unfortunately, in most instances this is not the
case.  For example, the most popular finite element methods for
incompressible flows are not pointwise divergence-free (see, e.g.,
\cite{JLMNR17} for a recent review on the topic), and proving that
schemes are energy stable is far from trivial (see, e.g.,
\cite{GiesselmannMakridakisPryer:2014,CelledoniJackaman:2021}).

In the particular case of bound-preservation, it was understood very
early that the solution of a standard finite element method does not,
in general, respect the physical bounds.  This was first formalised in
\cite{CR73} in the finite element context where it was shown that the
approximation using piecewise linear finite elements respects such
bounds only if the mesh satisfies certain assumptions about its
internal angles, and how refined it is. In addition to the mesh
restriction, for conservation laws finite element methods that respect
the physical bounds of the problem can either be first order accurate,
or nonlinear, due to the notorious Godunov order barrier theorem (se,
e.g., \cite{God59}). 

As a consequence of the above discussion, in the last few decades
numerous methods that respect global bounds have been proposed.  A
special attention has been given, in fact, to a stronger
property. Namely, numerous methods respecting the {\it Discrete
  Maximum Principle (DMP)} have been proposed, especially for
convection-dominated problems (see \cite{MH85,XZ99,BE05,Kuz07,BJK17},
just to name a few, and \cite{BJK23} for a recent review).  These
methods are, for the most part, nonlinearly stabilised methods. That
is, methods that add a nonlinear stabilising term to the Galerkin
scheme in a way that diffusion is added locally, thus making the
problem locally diffusion-dominated, and avoiding the spurious
oscillations and local violations of the maximum principle.
Interestingly, despite the above-mentioned discretisations being
nonlinear, in most cases the finite element methods proposed are based
on piecewise linear elements. In fact, the extension to higher order
elements imposes even stronger mesh conditions (for example, in
\cite{HM81} it is proven that for the Poisson equation in two space
dimensions a monotone discretisation using quadratic elements can be
built if the mesh is either equilateral, or consists of squares in
which the squares are divided by arbitrary diagonals), and, on the
other hand, there is not much analysis available for nonlinear schemes
using higher order polynomials.

In many situations, the stability of the numerical method does not
require the discrete solution to be devoid of local spurious
oscillations, but only needs to satisfy the global bounds. In such a
case, the problem is actually simpler and there are several
methodologies available. The first option one might think of is simply
cutting off the values that lie outside the admissible range.  This
approach is perhaps used in many simulations without being explicitly
mentioned, and it has been analysed for a linear reaction-diffusion
equation in \cite{Kreuzer14}, and for parabolic problems in
\cite{LHV13}.  A more radical approach consists on {\it reformulating
  the problem} in such a way that the discrete solution always
respects the corresponding bounds (see, e.g.  \cite{HuanChen23} for
the application to Chemotaxis, or \cite{FK04} for an extremely widely
used reformulation in non-Newtonian fluid mechanics).  Alternatively,
global bounds can be enforced by introducing them as inequality
constraints and then approximate a control problem as it has been
done, e.g., in \cite{EHS09}.  Similarly, inequality
constraints can be dealt with using Lagrange multipliers and solving
an extended system, as has been recently done in \cite{ChengShen22},
or \cite{vdVXX19},where a semi-smooth Newton method has also been
introduced to deal with the non-smoothness.

In the very recent work \cite{BGPV23} a different strategy was
followed to impose global bounds in the solution.  The first step is
to define the set, denoted $V_\calP^+$, of {\it admissible} finite
element functions as those satisfying the global bounds {\it at their
  degrees of freedom} (nodal values in the case of Lagrangian
elements); then, introduce an algebraic projection onto the admissible
set, {denote by $u_h^+$ the projection of $u_h^{}$ onto $V_\calP^+$}, 
and write a finite element problem for the projected object.
Since this process introduces a kernel, as the
projection is not injective, a stabilising term is added to remove the
singularity.  This last step allowed, in particular, to avoid the
introduction of Lagrange multipliers.  Moreover, when applied to the
linear reaction-diffusion equation, it turns out that $u_h^+$ is the
orthogonal projection onto $V_\calP^+$, and thus is independent of the
stabilisation used (this last property is lost when applied to more
complicated problems, such as problems with nonlinear reaction, also
addressed in \cite{BGPV23}).  So, the purpose of the present work is
to extend the methodology presented in \cite{BGPV23} to the
convection-diffusion equation. Although the driving principles are
similar, this work presents significant novelities with respect to
\cite{BGPV23}, namely
\begin{enumerate}
\item the starting point in the construction of the method is not the
  plain Galerkin scheme, but a stabilised finite element method
  instead. The reason for this is twofold:  first, in regions where the
  constraint is not active, local oscillations might still appear (and
  linear stabilisation helps with this), and second, our numerical
  experiments show that the addition of linear stabilisation in
  $u_h^+$ helps tremendously the good behaviour of the nonlinear
  solver;
\item the definition of the stabilisation form is different, since we
  need to control the convective term;
\item the analysis differs greatly from that of \cite{BGPV23}.  On the
  one hand, the well-posedness analysis is very different, as the
  discretisation is not driven by a monotone operator. On the other
  hand, since the problem is non-symmetric the solution $u_h^+$ is no
  longer the orthogonal projection of $u$ onto $V_\calP^+$, and thus
  the error analysis follows an alternative path.
\end{enumerate}
	
The remainder of the paper is organised as follows: In \S
\ref{sec:pre} we introduce the notation, the model problem, and all
the preliminary material for the setup of the method (including the
choice of linearly stabilised method to be used).  In
Section~\ref{Sec:FEM} we present the finite element method and show
its well-posedness. The error analysis is carried out in
Section~\ref{Sec:Error}, and in \S \ref{sec:numerics} we test the
performance of the method via numerical experiments, comparing also
with previously existing alterantives. Finally, some conclusions and
future directions are drawn in \S \ref{sec:conc}.
\section{General Setting And The Model Problem}
\label{sec:pre}	
We will adopt standard notations for Sobolev spaces, in line with,
e.g., \cite{EG21-I}. For $D\subseteq\mathbb{R}^{d}$, we denote by
$\|\cdot\|_{0,p,D} $ the $L^{p}(D)$-norm; when $p=2$ the subscript $p$
will be omitted and we only write $\|\cdot\|_{0,D} $. In addition, for
$s\geq 0$, $p\in [1,\infty]$, we denote by $\| \cdot \|_{s,p,D}$ ($|
\cdot |_{s,p,D}$) the norm (seminorm) in $W^{s,p}(D)$; when $p=2$, we
will again omit the subscript $p$ and only write $\|\cdot \|_{s,D}$
($| \cdot |_{s,D}$). In addition, we denote by $H^{-1}(D)$ the dual of
$H_{0}^{1}(D)$ while identifying $L^2(D)$ with its dual. Thus, writing
$\langle\cdot,\cdot\rangle_D^{}$ for the duality pairing, we have
\begin{equation*}
  \langle f,v\rangle_D^{}=\int_D f(\boldsymbol{x})v(\boldsymbol{x})\textrm{d}\boldsymbol{x}\qquad
  \forall\, v\in H^1_0(D)\,,
\end{equation*}
whenever $f\in H^{-1}(D)$ is regular enough.  We do not distinguish
between inner product and duality pairing for scalar or vector-valued
functions.
			
\subsection{The Model Problem} 

Let $\Omega$ be an open bounded Lipschitz domain in $\mathbb{R}^{d}$ ($d=2,3$) with polyhedral boundary $\partial \Omega$.
For a given $f\in H^{-1}(\Omega)$, we consider the following convection-diffusion problem:
\begin{align}
   -{\rm div} (\mathcal{D}\nabla u)+\bbeta\cdot\nabla u+\mu u&= f  \hspace{1cm}{\rm in}\;\Omega, \label{CDR}\\  
   u&= 0  \hspace{1cm}{\rm on}\; \partial \Omega\,,\nonumber
\end{align}
where $\mathcal{D}=(d_{ij})_{i,j=1}^{d}\in L^{\infty}(\Omega)^{d\times d}$, $\bbeta=( \beta_{i})_{i=1}^{d}\in L^{\infty}(\Omega)^{d}$,  and {$\mu\in \mathbb{R}^+$},   
respectively, are the diffusion tensor, the convective field, and the reaction coefficient. We will assume that $ {\rm div} \bbeta=0$, and the diffusion tensor $\mathcal{D}$ is symmetric and uniformly strictly positive definite in $\Omega$;  in other words, there exists a positive constant $d_{0}>0$ such that for almost all $\boldsymbol{x}\in \Omega$, we have  
\begin{align}
  \sum_{i,j=1}^{d}y_{i}^{}d_{ij}(\bx)y_{j}^{}\geq d_{0}\sum_{i=1}^{d}y_{i}^{2} \hspace{1cm}\forall (y_{1},\ldots, y_{d})\in \mathbb{R}^{d}.\label{eq6}
\end{align}

{
\begin{Remark} We have assumed $ {\rm div} \bbeta=0$ only to avoid technical diversions.
The results presented in this paper remind essentially unchanged under the milder condition 
$\mu- {\rm div} \bbeta/2>0$. Moreover, all the results presented below also are applicable to
the case when $\mu\in L^\infty(\Omega)$ is a function that is strictly positive in $\overline{\Omega}$.
\end{Remark}}

The standard weak formulation of  \eqref{CDR} reads as follows: find $u\in H_{0}^{1}(\Omega)$, such that 
\begin{equation}
  a(u,v)=\langle f,v\rangle_{\Omega}\hspace{1cm}\forall v\in H_{0}^{1}(\Omega),\label{eq7}
\end{equation}
where $a(\cdot,\cdot)$ is the bilinear form defined by 
\begin{equation}
  a(w,v):=\left(\mathcal{D}\nabla w,\nabla v\right)_{\Omega}+(\bbeta\cdot\nabla w,v)_{\Omega}+(\mu w,v)_{\Omega}\hspace{1cm}\forall v,w\in H_{0}^{1}(\Omega).\label{eq8}
\end{equation}
The bilinear form  $a(\cdot.\cdot)$ induces the following ``energy'' norm in $H^1_0(\Omega)$
\begin{equation*}
  \|v \|_{a}=\sqrt{a(v,v)}\,,
\end{equation*}
and thus \eqref{eq7} is well-posed thanks to the Lax-Milgram Lemma
(see, e.g. \cite[Lemma~25.2]{EG21-II}).

As it was mentioned in the introduction, our aim is to look for
discrete solutions that respect the same bounds as the solution of
\eqref{eq7}.  Thus, we make the following assumption on $u$, solution
of \eqref{eq7}.

\noindent\underline{Assumption (A1):} We will suppose that the weak solution of \eqref{eq7} satisfies 
\begin{equation}
  0\leq u(\bx)\leq \kappa, \hspace{0.5cm}\text{for almost all}\ \  \bx\in \Omega,\label{eq14}
\end{equation}
where $\kappa$ is a known positive constant.

\begin{Remark} 
Assumption (A1) is, in fact, a re-statement of one of the consequences of the maximum principle for elliptic partial differential equations, see e.g. \cite{evans2010partial}.
The lower bound in \eqref{eq14} is not required to be equal to zero, but we set it as zero for clarity of exposition.  In addition, the same results 
proven in this work hold if $\kappa$ is replaced by a non-negative {continuous} function $\kappa(\bx)$.  {In general, sharp bounds for the constant $\kappa$ are not available, but in some cases
they can be obtained.  For example,  as a consequence of maximum and comparison principles (see 
e.g,  \cite[Corollary~4.4]{renardy2006introduction})
the following bounds can be proven: for almost all $\bx\in \Omega$
the solution $u$ of \eqref{eq7} satisfies 
\begin{equation}
-\frac{\|f\|_{0,\infty,\Omega}^{}}{\mu}\le u(\bx) \le \frac{\|f\|_{0,\infty,\Omega}^{}}{\mu}\,.
\end{equation}
Moreover, if $f\ge 0$ in $\Omega$ we can sharpen the above bound to:
\begin{equation}
0\le u(\bx) \le \frac{\|f\|_{0,\infty,\Omega}^{}}{\mu}\,,
\end{equation}
for almost all $\bx\in\Omega$. Hence, a reasonable estimate for $\kappa$ is $\frac{\|f\|_{0,\infty,\Omega}^{}}{\mu}$
(and this is, in fact, the estimate we have used in our numerical experiments). 
}
\end{Remark}

\subsection{Triangulations, finite element spaces, and preliminary results}	
Let $\calP$ be a conforming, shape-regular, quasi-uniform partition of $\Omega$ into {closed}
simplices (or affine quadrilateral/hexahedra). 
Over $\calP$, and for $k\geq 1$, we define the finite element spaces
{\begin{align}
\tilde{V}_\calP^{}:=&\, \{v_{h}\in C^{0}(\overline{\Omega}):v_{h}^{}|_{K}\in \mathfrak{R}(K)\ \ \  \forall K\in \calP \}\,,\\
  V_\calP^{}:=&\, \tilde{V}_\calP^{}\cap H_{0}^{1}(\Omega)\,,
\end{align}}
where 
\begin{align}
  \mathfrak{R}(K)= \left\{ \begin{array}{ll} \mathbb{P}_{k}(K),  \hspace{1cm}\text{ if $K$ is a simplex},\\  \mathbb{Q}_{k}(K) , \hspace{1cm}\text{ if $K$ is an affine quadrilateral/hexahedral},\end{array} \right.
\end{align}  
with $\mathbb{P}_{k}(K)$ denoting the polynomials of total degree $k$ on $K$ and $\mathbb{Q}_{k}(K)$ denotes the mapped space of polynomial of degree of at most $k$ in each variable.

{\begin{Remark} The results presented below can, in
    principle, be extended to more general quadrilateral meshes.
    Nevertheless, that would require technical diversions due to the
    need to prove norm-equivalences (that are classical for mapped
    elements). To avoid these diversions and keep the presentation
    focused on the bound-preservation aspects, we restrict the
    presentation to affine simplices and quadrilateral/hexahedral
    meshes.
\end{Remark}}

For a mesh $\mathcal{P}$, the following notations are used:
\begin{itemize}
\item let $\{\bx_{1}^{},\bx_{2}^{},\ldots, \bx_{N}^{}\}$ denote the
  set of internal nodes, and the usual Lagrangian basis functions
  associated to these nodes, spanning the space $V_{\calP}^{}$, are
  denoted by $\phi_{1}^{},\ldots,\phi_{N}^{}$;
			
\item let $\calF_{I}^{}$ denote the set of internal facets,
  $\calF_{\partial}^{}$ denotes the set of boundary facets, and
  $\calF_{h}^{}=\calF_{I}^{}\cup \calF_{\partial}^{}$ denotes the set
  of all facets of $\calP$; for an element $K\in\calP$ the set of its
  facets is denoted by $\calF_{K}^{}$;
			
\item for $K\in \calP$, $F\in \calF_{h}^{}$, and a node $\bx_{i}^{}$, we define the following neighbourhoods:
  \begin{align*}
    \omega_{K}^{}&=\bigcup\{K'\in \calP: K\cap K'\neq \emptyset\},
    \\
    \omega_{F}^{}&=\bigcup\{K\in \calP: F\subset K\},
    \\
    \omega_{i}^{}&=\bigcup\{K\in \calP: \boldsymbol{x}_{i}\in K\};
  \end{align*}
\item for a facet $F\in \mathcal{F}_{I}$, $\llbracket \cdot
  \rrbracket$ denotes the jump of a function across $F$.
\end{itemize}

The diameter of a set $G\subset\mathbb{R}^d$ is denoted by $h_G^{}$,
and $h=\max\{h_{K}:K\in \mathcal{P}\}$ stands for the mesh {size}.
We also define the mesh function $\frakh$ as a continuous,
element-wise linear function defined as a local average of meshes
commonly used in finite element analysis \cite[c.f.]{Makridakis:2018}.
{For this, we introduce the set of {\sl vertices} of the mesh,
$\boldsymbol{v}_1^{},\ldots,\boldsymbol{v}_M^{}$,  and 
define $\frakh$ as
the piecewise linear function prescribed by the nodal
values
\begin{equation}\label{mesh-function-definition}
\frakh(\boldsymbol{v}_i^{})=\dfrac{\sum_{K: \boldsymbol{v}_i^{}\in K}h_K^{}}{\# \{K: \boldsymbol{v}_i^{}\in K\}}\,.
\end{equation} }

In the construction of the method, and its analysis, the following
mass-lumped $L^{2}$-inner product will be of importance: for every
polynomial function defined on $K$, in the space $V_\calP^{}$, we
define
\begin{align}
  (v_{h},w_{h})_{h}=\sum_{i=1}^{N}\frakh (\boldsymbol{x}_{i})^{d}v_{h}(\boldsymbol{x}_{i})w_h^{}(\boldsymbol{x}_{i}),
\end{align}  
which induces the norm $\lvert
v_{h}\lvert_{h}=(v_{h},v_{h})_{h}^{\frac{1}{2}}$ in $V_\calP^{}$. This
norm is, in fact, equivalent to the standard $L^2(\Omega)$-norm.  More
precisely, the following result, whose proof can be found in
\cite{EG21-I}, will be used repeatedly in our analysis
below: There exist $C,c>0$, independent of $h$, such that
{
\begin{equation}
  c\,\sum_{i: \bx_i^{}\in K}h_K^d v_h^2(\bx_i^{})\leq \|v_h^{}\|_{0,K}^{2}\leq C\,\sum_{i: \bx_i^{}\in K}h_K^d v_h^2(\bx_i^{})
  \qquad\forall\, K\in \calP,\label{eq5-1}
\end{equation}
and thus, as a consequence of the mesh-regularity  of the mesh, the following holds:}
\begin{equation}
  c\,|v_{h}|_h^{2}\leq \|
  v_h^{}\|_{0,\Omega}^{2}\leq C\,|
  v_h^{}|_h^{2},\label{eq5}
\end{equation}
for all $v_h^{}\in V_\calP^{}$. 

\begin{Remark}
  Let $\mathcal{M}$ denote the mass matrix, whose its entries are
  defined as
  \begin{align*}
    \mathcal{M}_{i,j}=\int_{\Omega}\phi_{j}\phi_{i}\mathrm{d}\bx
    \quad{\rm for }\hspace{0.1cm}i,j=1,\ldots,N.
  \end{align*}
  Then,  setting $v_h^{}=\sum_{i=1}^{N}v_{i}^{}\phi_i^{}$ and $\boldsymbol{v}:=(v_1^{},v_2^{},\ldots,v_N^{})$, 
  as a consequence of \eqref{eq5} and  \cite[Theorem~9.8]{EG21-I}, the following equivalence also holds:
  \begin{equation}
    c\,| v_h^{}|_h^{2}\leq \sum_{i,j=1}^{N}v_i^{}\mathcal{M}_{ij}^{} v_j^{}=\boldsymbol{v}^{T}\mathcal{M}\boldsymbol{v}\leq C\lvert v_{h}\lvert_{h}^{2}\,,\label{eq51}
  \end{equation} 
  where $C,c$ are positive constants independent of $h$ and $v_h^{}$.
\end{Remark}


Next, we recall that the Lagrange interpolation operator is defined by
(see, e.g., \cite[Chapter~11]{EG21-I})
\begin{align}
  i_{h}&:\mathcal{C}^{0}(\overline{\Omega}){\cap H^1_0(\Omega)}\longrightarrow V_{\mathcal{P}},\nonumber\\
  v&\longmapsto i_{h}v=\sum_{j=1}^{N}v(x_{j})\phi_{j}. \label{lagrange}
\end{align}
In addition, the $L^{2}(\Omega)$ -orthogonal projection operator (see
\cite[Chapter~22]{EG21-I}) $\pi:L^{2}(\Omega)\longrightarrow
V_\calP^{}$ is defined as follows
\begin{align}
  \pi: L^{2}(\Omega)&\longrightarrow V_{\mathcal{P}}, \nonumber\\
  w&\longmapsto \pi(w)\;\textrm{where}\;\left(\pi(w),v_h^{}\right)_\Omega^{}=(w,v_h^{})_\Omega^{} \quad\forall\, v_h^{}\in V_\calP^{}\,. \label{def:projection}
\end{align}

With the above ingredients, we now state some inequalities and
properties that will be useful in what follows:
\begin{list}{ }{ }
\item \textbf{a) Inverse inequality:} (\cite[Lemma ~12.1]{EG21-I}) For
  all $m,\ell\in \mathbb{N}_0^{},\,0\le m\leq \ell$ and all $p,q \in
  [1,\infty]$, there exists a constant $C$, independent of $h$, such
  that
  \begin{align}
    | v_h^{}|_{\ell,p,K}^{}\leq Ch_K^{m-\ell+d\left(\frac{1}{p}-\frac{1}{q}\right)}\,| v_h^{}|_{m,q,K}^{}\qquad\forall\, v_h^{}\in V_\calP^{}\,.\label{inverse}
  \end{align}
\item \textbf{b) Discrete Trace inequality:}
  (\cite[Lemma~2.15]{EG21-I}) There exists $C>0$ independent of $h$
  such that, for every $v\in H^{1}(K)$ the following holds
  \begin{equation}
    \|v \|^{2}_{0,\partial K} \leqslant   C \left(h_K^{-1}\|v \|^2_{0,K}+h_K^{}|v |^{2}_{1,K}\right).\label{trace}
  \end{equation}
\item \textbf{c) Approximation property of the Lagrange interpolant:}
  (\cite[Proposition~1.12]{EG21-I}) Let {$1\leq \ell \leq k$} and
  $i_{h}$ be the Lagrange interpolant. Then, there exists $C>0$,
  independent of $h$, for all $h$ and $v\in H^{\ell+1}(\Omega){\cap H^1_0(\Omega)}$ the
  following holds:
  \begin{equation}
    \|v- i_{h}v \|_{0,\Omega}+h |v-i_{h}v |_{1,\Omega}\leq Ch^{\ell +1}|v |_{\ell +1,\Omega}\,.\label{lagranges}
  \end{equation} 
\item \textbf{d) Approximation property of the $L^2$-orthogonal projection
  operator:} (\cite[Section~22.5]{EG21-I}) Let $0\leq \ell \leq k$ and
  $\pi$ be the $L^2(\Omega)$-orthogonal projection. Then, there exists
  $C>0$, independent of $h$, for all $h$ and $v\in H^{\ell+1}(\Omega){\cap H^1_0(\Omega)}$
  the following holds:
  \begin{equation}
    \|v- \pi (v )\|_{0,\Omega}^{} + h\, |v- \pi (v )|_{1,\Omega}^{} \leq Ch^{\ell +1}|v |_{\ell +1,\Omega}^{}\,.\label{proj1}
  \end{equation} 
\end{list}	

\subsection{The algebraic projection onto the admissible set}
With Assumption~(A1) in mind, we define the following subset of finite
element functions that satisfy the bound \eqref{eq14} at the degrees
of freedom:
\begin{align}
  V_{\mathcal{P}}^{+}:=\{v_{h}\in V_{\mathcal{P}}: v_h^{}(\bx_{i})\in[0,\kappa]\ \  \text{for all}\ \ i=1,\ldots,N  \}.\label{eq16}
\end{align} 
Every element $v_{h}\in V_\calP$ can be split as the sum
$v_{h}=v_h^{+}+v_h^{-}$, where $v_h^{+}$ and $v_h^{-}$ are given by
\begin{align}
  v_{h}^{+}=\sum_{i=1}^{N}\max\Big\{0,\min\{v_h(\bx_i),\kappa\}\Big\}\,\phi_i\,,\label{eq17}
\end{align}
and 
\begin{align}
  v_h^{-}=v_h-v_h^{+}. \label{eq18}
\end{align}
We refer to $v_{h}^{+}$ and $v_{h}^{-}$ as the \textit{constrained}
and \textit{complementary} parts of $v_{h}$, respectively.  Using this
decomposition we define the following algebraic projection
\begin{equation}
  (\cdot)^{+}:V_{\mathcal{P}}\rightarrow V_{\mathcal{P}}^{+}\quad,\quad
  v_{h}\rightarrow v_{h}^{+}\,. \label{posoperator}
\end{equation}

 {
 \begin{Remark} If $\kappa$ is not a constant value, but a non-negative continuous
 function, the only difference in the definition of the projection is that in such a case
 the constrained part is given by
 \begin{equation*}
  v_{h}^{+}=\sum_{i=1}^{N}\max\Big\{0,\min\{v_h(\bx_i),\kappa(\bx_i^{})\}\Big\}\,\phi_i\,.
\end{equation*}
 To avoid technical diversions we will not detail such a case, but the results proven in this paper
 remain unchanged.
 \end{Remark}}
 
 The following results concerning this projection will be used repeatedly.
\begin{lemma}\label{lem32}
  Let the operator $(\cdot)^{+}$ be defined in \eqref{posoperator}.
  There exists a constant $C>0$, independent of $h$, such that 
  \begin{align}
    \|w^{+}_{h}-v^{+}_{h} \|_{0,\Omega}^{}&\leq C \|w_{h}-v_{h} \|_{0,\Omega},\label{Lip1}\\
    \|v_{h}^{+} \|_{0,\Omega}&\leq C\kappa,		\label{bound}
  \end{align}
  for all $w_{h},v_{h}\in V_{\mathcal{P}}$.
\end{lemma}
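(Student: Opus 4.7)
The plan is to reduce both inequalities to pointwise estimates at the Lagrange nodes and then invoke the mass-lumped $L^2$-norm equivalence \eqref{eq5}. The key algebraic observation is that the nodal rule defining $(\cdot)^+$ in \eqref{eq17}, namely $g(t):=\max\{0,\min\{t,\kappa\}\}$, is $1$-Lipschitz on $\mathbb{R}$, as the composition of the two truncations $t\mapsto\min\{t,\kappa\}$ and $s\mapsto\max\{0,s\}$, each of which is non-expansive.

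For \eqref{Lip1} I would fix the internal nodes $\bx_i$ and use this $1$-Lipschitz property to write $|w_h^+(\bx_i)-v_h^+(\bx_i)|=|g(w_h(\bx_i))-g(v_h(\bx_i))|\leq |w_h(\bx_i)-v_h(\bx_i)|$. Squaring, multiplying by $\frakh(\bx_i)^d$, summing over $i=1,\ldots,N$, and taking square roots then yields the lumped estimate $|w_h^+-v_h^+|_h\leq |w_h-v_h|_h$. The $h$-uniform equivalence \eqref{eq5} transfers this to the $L^2$-norm on both sides, giving \eqref{Lip1} with a constant depending only on the two equivalence constants in \eqref{eq5}.

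For \eqref{bound} I would use that, by construction, $v_h^+(\bx_i)\in[0,\kappa]$ at every node, so that
\begin{equation*}
|v_h^+|_h^2 \;=\; \sum_{i=1}^N \frakh(\bx_i)^d\,v_h^+(\bx_i)^2 \;\leq\; \kappa^2 \sum_{i=1}^N \frakh(\bx_i)^d.
\end{equation*}
Regrouping this sum element-by-element and using the quasi-uniformity of $\calP$ together with the definition of $\frakh$ in \eqref{mesh-function-definition} gives $\sum_i \frakh(\bx_i)^d \leq C\sum_{K\in\calP} h_K^d \leq C'|\Omega|$, a bound which is independent of $h$. Combining this with the right-hand inequality in \eqref{eq5} yields \eqref{bound} with a constant absorbing $|\Omega|^{1/2}$.

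There is no serious obstacle here: both statements are essentially algebraic consequences of the $1$-Lipschitz character of the componentwise clipping map. The only point that requires some care is verifying that the constants coming out of \eqref{eq5} and the element-wise regrouping of the nodal weights $\frakh(\bx_i)^d$ are indeed independent of $h$, which is exactly what the shape-regularity and quasi-uniformity assumptions on $\calP$, and the averaging in \eqref{mesh-function-definition}, are designed to guarantee.
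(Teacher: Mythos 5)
Your proposal is correct and follows essentially the same route as the paper: both proofs rest on the observation that the nodal clipping map is $1$-Lipschitz, so $|w_h^+(\bx_i)-v_h^+(\bx_i)|\leq|w_h(\bx_i)-v_h(\bx_i)|$ at every node, and then transfer the resulting nodal estimate to the $L^2$-norm via the mass-lumped norm equivalence \eqref{eq5} (the paper routes the first inequality through the mass-matrix bound \eqref{eq51} instead of working purely in $|\cdot|_h$, but this is a cosmetic difference). Your treatment of \eqref{bound}, bounding $\sum_i\frakh(\bx_i)^d$ by a constant multiple of $|\Omega|$ using shape regularity, is exactly the paper's argument.
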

\begin{proof}
{During this proof we drop the subindex $h$ to lighten the notation. Let $w,v\in V_\calP^{}$.
  We start noticing that if $w(\bx_i)\leq v(\bx_i)$, then
  $v^{+}(\bx_i)-w^{+}(\bx_i) \leq v(\bx_i)-w(\bx_i)$, and when
  $v(\bx_i)\leq w(\bx_i) $, we have $v^{+}(\bx_i)-w^{+}(\bx_i) \leq
  -(v(\bx_i)-w(\bx_i))$. So,
  $|v^{+}(\boldsymbol{x}_{i})-w^{+}(\boldsymbol{x}_{i})| \leq |
  v(\boldsymbol{x}_{i})-w(\boldsymbol{x}_{i})|$.  Now, set $e_{i}:=|
  v(\boldsymbol{x}_{i})-w(\boldsymbol{x}_{i})|$, and $\boldsymbol
  e=(e_{j})_{j=1}^{N}$. Then, using \eqref{eq51} we obtain
  \begin{align*}
    \|v^{+}(\boldsymbol{x}_{i})-w^{+}(\boldsymbol{x}_{i})\|_{0,\Omega}^{2}&=\int_{\Omega}\left(\sum_{i=1}^{N} (v^{+}(\boldsymbol{x}_{i})-w^{+}(\boldsymbol{x}_{i}))\phi_{i}\right)^{2}\mathrm{d}\bx \leq \int_{\Omega}\left(\sum_{i=1}^{N}e_{i}\phi_{i} \right)^{2}\mathrm{d}\bx\\
    &=\sum_{i,j=1}^{N} e_{j}\mathcal{M}_{ij}e_{i}\leq C\|v-w \|_{0,\Omega}^{2},
  \end{align*} 
  which proves \eqref{Lip1}.
  To prove \eqref{bound} we use \eqref{eq5} and the mesh regularity to obtain that, for all $v\in V_\calP^{}$, we have
  \begin{align*}
    \| v^{+}\|_{0,\Omega}^{2}\leq C\sum_{i=1}^{N}\frakh (\boldsymbol{x}_{i})^{d}(v^{+}\left(\boldsymbol{x}_{i})\right)^{2}\leq C\left(\sum_{i=1}^{N}\frakh (\boldsymbol{x}_{i})^{d}\right) \kappa^{2}\leq C\kappa^{2},
  \end{align*}
  concluding the proof.}
\end{proof}	
		
\subsection{A linear stabilised method}

As it was mentioned in the introduction, in the method's definition we
need to introduce a linear stabilising term aimed at dampening the
oscillations caused by the dominating convection.  In this work we
have chosen to use the Continuous Interior Penalty (CIP) method
originally proposed in \cite{Burman}. This method adds the following
stabilising term to the Galerkin scheme:
{\begin{align}
  J(u_h^{},v_h^{})=\gamma \sum_{F\in\calF_I^{}}\int_F\|\bbeta \|_{0,\infty,F}^{} h_{F}^{2}\,\llbracket \nabla u_{h} \rrbracket \cdot \llbracket \nabla v_{h} \rrbracket \,\mathrm{d}s\,.
\label{eq10}
\end{align}}
Here,  
$\gamma\ge 0$ is a non-dimensional constant.  Using this stabilising
term, the CIP stabilised method proposed in \cite{Burman} reads as
follows: find $u_h\in V_\calP$ such that
\begin{equation}
  a_{J}(u_{h},v_{h}):=a(u_{h},v_{h})+	J(u_{h},v_{h})
  =
  \langle f,v_{h}\rangle_{\Omega}\hspace{1cm}\forall v_h^{}\in V_\calP\,.\label{eq12}
\end{equation}
The bilinear form $a_{J}(\cdot,\cdot)$ induces the following norm on $V_\calP$ 
\begin{align}
  \|v_{h} \|_{h}
  :=
  a_{J}(v_{h},v_{h})^{\frac{1}{2}}
  =
  \left(\|\mathcal{D}^{\frac{1}{2}} \nabla v_{h}\|^{2}_{0,\Omega}
  +
  \lVert \mu^{\frac{1}{2}} v_{h}\lVert_{0,\Omega}^{2}
  +
  J(v_{h},v_{h})\right)^{\frac{1}{2}}.
  \label{norm}
\end{align}
The following result will be of use in the error analysis.

\begin{lemma}\label{lem33}
  There exists $C>0$ such that for $v_{h}\in V_{\mathcal{P}}$, the
  penalty term (\ref{eq10}) satisfies the following property
  \begin{align}
    \frac{\gamma}{\|\bbeta\|_{0,\infty,\Omega}}\left\|h^{\frac{1}{2}}\left(\bbeta\cdot\nabla v_{h} -\pi(\bbeta\cdot\nabla v_{h})\right)\right\|^{2}_{0,\Omega}\leq   C\,J(v_{h},v_{h}).\label{equ32}
  \end{align}  
  Moreover, the stabilising  term \eqref{eq10} satisfies the following bounds: there exists $C>0$, independent of $h$ and any
  physical constant, such that for all  $w_h,v_h\in V_{\mathcal{P}}$ the following holds
  \begin{align}
    J(v_{h},w_{h})&\leq C\gamma\,h\,\|\bbeta\|_{0,\infty,\Omega}|v_{h} |_{1,\Omega}|w_{h} |_{1,\Omega},\label{equ35}\\
    J(v_{h},w_{h})&\leq C\gamma\left(\sum_{K\in \mathcal{P}} h_{ K}^{-1}\|\bbeta \|_{0,\infty,K}\|  v_{h}\|_{0,K}^{2}\right)^{\frac{1}{2}}\left(\sum_{K\in \mathcal{P}} h_{ K}^{-1}\|\bbeta \|_{0,\infty,K}\|  w_{h}\|_{0,K}^{2}\right)^{\frac{1}{2}}. \label{equ36}
  \end{align}  
\end{lemma}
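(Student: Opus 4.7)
The plan is to prove the three estimates in reverse difficulty order: \eqref{equ35} and \eqref{equ36} are essentially facet-by-facet applications of Cauchy--Schwarz combined with discrete trace and inverse inequalities, while \eqref{equ32} is the substantive one and requires using the best-approximation property of $\pi$ together with an Oswald-type averaging argument. I would assume $\bbeta$ is regular enough that $\llbracket \bbeta\cdot\nabla v_h\rrbracket = \bbeta\cdot\llbracket\nabla v_h\rrbracket$ a.e.\ on facets, which is consistent with the $L^\infty$ setting once the two one-sided traces of $\bbeta$ coincide.

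For \eqref{equ35}, I would apply Cauchy--Schwarz inside each facet integral of $J(v_h,w_h)$, pull out one factor of $\|\bbeta\|_{0,\infty,\Omega}$, and pair one $h_F$ with each jump term. For each internal facet $F$ shared by $K_1,K_2$, the jump is controlled by the one-sided traces of $\nabla v_h$; the discrete trace inequality \eqref{trace}, applied to the polynomial $\nabla v_h$, yields $h_F\|\nabla v_h|_{K_j}\|_{0,F}^2 \le C|v_h|_{1,K_j}^2$. Summing and using the local-finiteness of the facet-to-element map, a final Cauchy--Schwarz in the sum over $F$, and the fact that an extra $h_F\le h$ can be extracted, delivers the required bound $C\gamma h\|\bbeta\|_{0,\infty,\Omega}|v_h|_{1,\Omega}|w_h|_{1,\Omega}$. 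For \eqref{equ36} the argument is identical, except that after the discrete trace inequality I would additionally apply the inverse inequality \eqref{inverse} of the form $|v_h|_{1,K}\le Ch_K^{-1}\|v_h\|_{0,K}$ to convert gradients into $L^2$-norms; bounding $\|\bbeta\|_{0,\infty,F}$ by the element values $\|\bbeta\|_{0,\infty,K}$ and splitting them symmetrically between the two sides of the Cauchy--Schwarz finishes the estimate.

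The main obstacle is \eqref{equ32}. Here I would exploit that $\pi(\bbeta\cdot\nabla v_h)$ is the $L^2$-best approximation in $V_\calP$, hence for any continuous interpolant $I_{os}(\bbeta\cdot\nabla v_h)\in \tilde V_\calP$,
\begin{equation*}
\|\bbeta\cdot\nabla v_h - \pi(\bbeta\cdot\nabla v_h)\|_{0,K} \le \|\bbeta\cdot\nabla v_h - I_{os}(\bbeta\cdot\nabla v_h)\|_{0,K}.
\end{equation*}
I would choose $I_{os}$ as an Oswald-type averaging operator: at each Lagrange node, replace the multi-valued nodal value of $\bbeta\cdot\nabla v_h$ by the arithmetic mean over the elements meeting that node. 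A classical scaling argument (mapping to a reference element and noting that the error vanishes when the elementwise polynomial is globally continuous) gives the local estimate
\begin{equation*}
\|\bbeta\cdot\nabla v_h - I_{os}(\bbeta\cdot\nabla v_h)\|_{0,K}^{2} \le C\sum_{F\in\calF_{K'},\,K'\subset\omega_K^{}} h_{K'}\|\llbracket\bbeta\cdot\nabla v_h\rrbracket\|_{0,F}^{2},
\end{equation*}
and the identity $\llbracket\bbeta\cdot\nabla v_h\rrbracket = \bbeta\cdot\llbracket\nabla v_h\rrbracket$ lets me pull $\|\bbeta\|_{0,\infty,F}$ out of the norm.

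It then only remains bookkeeping: multiplying by $h$, summing over $K$, using the mesh regularity to replace $h$ by $h_F$ (up to a constant) and to control the number of facets touching each element, one obtains
\begin{equation*}
h\|\bbeta\cdot\nabla v_h - \pi(\bbeta\cdot\nabla v_h)\|_{0,\Omega}^{2} \le C\|\bbeta\|_{0,\infty,\Omega}\sum_{F\in\calF_I^{}} h_F^{2}\,\|\bbeta\|_{0,\infty,F}\,\|\llbracket\nabla v_h\rrbracket\|_{0,F}^{2} = \frac{C\|\bbeta\|_{0,\infty,\Omega}}{\gamma}J(v_h,v_h),
\end{equation*}
which is exactly \eqref{equ32} after dividing by $\|\bbeta\|_{0,\infty,\Omega}$. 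The only delicate point in this last step is ensuring that the Oswald averaging is compatible with the homogeneous Dirichlet condition (so that $I_{os}$ maps into $\tilde V_\calP$ rather than $V_\calP$, which is why I use the best-approximation inequality against a function in the larger space $\tilde V_\calP$ intersected with mean-value constraints).
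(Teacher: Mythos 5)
For \eqref{equ35} and \eqref{equ36} your argument is the same as the paper's: Cauchy--Schwarz over the facets, the discrete trace inequality \eqref{trace} applied to $\nabla v_h^{}$ elementwise, and the inverse inequality \eqref{inverse} to pass from gradients to $L^2$-norms. Nothing to add there. For \eqref{equ32} the paper does not give a proof at all --- it simply invokes Lemma~5 of \cite{Burman} --- whereas you reconstruct the underlying argument (best approximation of the $L^2$-projection beaten by an Oswald-type averaging interpolant, whose deviation from a piecewise polynomial is controlled by its facet jumps). That is indeed the standard mechanism behind the cited result, so your route is more self-contained; it also makes explicit the regularity you need on $\bbeta$ for $\bbeta\cdot\nabla v_h^{}$ to have single-valued nodal values, an issue the paper inherits silently from the citation.

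Two steps in your write-up of \eqref{equ32} would fail as literally stated and need repair. First, the elementwise inequality $\|\bbeta\cdot\nabla v_h^{} - \pi(\bbeta\cdot\nabla v_h^{})\|_{0,K}^{}\le\|\bbeta\cdot\nabla v_h^{} - I_{os}(\bbeta\cdot\nabla v_h^{})\|_{0,K}^{}$ is false: $\pi$ minimises the \emph{global} $L^2$-distance to $V_\calP^{}$, not the local one. This is harmless here because the mesh is quasi-uniform, so the weight $h^{1/2}$ is a global constant up to equivalence and the global best-approximation property suffices --- but you should argue globally. Second, and more substantively, the best-approximation property of $\pi$ only allows competitors in the space $\pi$ projects onto, which in this paper is $V_\calP^{}=\tilde V_\calP^{}\cap H^1_0(\Omega)$; an Oswald average of $\bbeta\cdot\nabla v_h^{}$ does not vanish on $\partial\Omega$, so it is not an admissible competitor, and your closing parenthesis about ``mean-value constraints'' does not resolve this. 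The two standard fixes are either to take $\pi$ as the projection onto the unconstrained space $\tilde V_\calP^{}$ (which is effectively what \cite{Burman} does), or to zero out $I_{os}$ at boundary nodes, in which case the local Oswald estimate picks up boundary-facet terms of the form $h_F^{}\|\bbeta\cdot\nabla v_h^{}\|_{0,F}^2$ that are \emph{not} present in $J(v_h^{},v_h^{})$ and must be handled separately. You should state explicitly which of these you adopt; as written, the chain from the Oswald estimate to $J(v_h^{},v_h^{})$ has a hole at the boundary.
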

\begin{proof}
  The inequality \eqref{equ32} {is a direct consequence of the result proven in}
  \cite[Lemma~5]{Burman}.  To prove \eqref{equ35} we use the
  Cauchy-Schwarz inequality, the local trace result \eqref{trace} and
  the inverse inequality \eqref{inverse} to obtain
  {\begin{align*}
    J(v_{h},w_{h})&= \sum_{F\in\calF_I^{}}\int_F\gamma \|\bbeta \|_{0,\infty,F} h_F^{2}\llbracket\nabla v_{h}^{} \rrbracket \cdot\llbracket\nabla w_{h} \rrbracket \mathrm{d}s\\
    &\le \left\{ \sum_{F\in\calF_I^{}} \gamma \|\bbeta \|_{0,\infty,F} h_F^{2}
    \|\llbracket\nabla v_{h}^{} \rrbracket\|_{0,F}^{2}\right\}^{\frac{1}{2}}
    \left\{ \sum_{F\in\calF_I^{}} \gamma \|\bbeta \|_{0,\infty,F} h_F^{2}
    \|\llbracket\nabla w_{h} \rrbracket\|_{0,F}^2\right\}^{\frac{1}{2}}\\ 
    &\leq  C\gamma\left(\sum_{K\in \calP}\|\bbeta \|_{0,\infty,K}h_{ K}^{2}\| \nabla \big(v_h^{}|_K^{}\big) \|_{0,\partial K}^{2}\right)^{\frac{1}{2}}\left(\sum_{K\in \calP}\|\bbeta \|_{0,\infty,K}^{}h_{ K}^{2} \|\nabla \big(w_h^{}|_K^{}\big) \|_{0,\partial K}^{2}\right)^{\frac{1}{2}} \\
    &\leq C\gamma \left(\sum_{K\in \mathcal{P}}h_{ K}\|\bbeta \|_{0,\infty,K}^{} \| \nabla v_{h}^{}\|_{0,K}^{2}\right)^{\frac{1}{2}}\left(\sum_{K\in \mathcal{P}} h_{ K}\|\bbeta \|_{0,\infty,K}\| \nabla w_{h}\|_{0,K}^{2}\right)^{\frac{1}{2}}\,,
  \end{align*} }
  which proves \eqref{equ35}.  The proof of \eqref{equ36} follows from
  the last inequality above and one further the application of the
  inverse inequality \eqref{inverse}.
\end{proof}	
	
\begin{Remark}
  An alternative definition of the CIP stabilising term, that we will
  also use in some of our numerical results, is given by penalising
  the upwind gradient jumps rather than the normal gradient, that is,
  \begin{equation}
    {J(u_h,v_h)
    =
    \gamma \sum_{F\in\calF_I^{}}\int_F\frac{\gamma_{\bbeta}}{\|\bbeta
      \|_{0,\infty,F}}  h_{F}^{2}\,\llbracket\bbeta\cdot\nabla
    u_{h}\rrbracket \llbracket\bbeta\cdot\nabla
    v_{h}\rrbracket \textrm{d}s\,,\label{eq121}}
  \end{equation}
  where $\gamma_\bbeta^{}\ge 0$ is a non-dimensional constant. In all
  our proofs we have used the term given by \eqref{eq10}, but the
  proofs remain valid if we use \eqref{eq121} instead.
\end{Remark}

\begin{Remark}
  The choice of CIP stabilisation has been made for convinience and
  simplicity of the presentation. In the numerical experiments we will
  show that the addition of the linear stabilising term has a positive
  effect on the performance of the method, more specifically, it will
  improve the performance of the nonlinear solver greatly.  From a
  stability/error estimates point of view, it is also worth mentioning
  that the exact same results proven in this work are also valid for
  other choices of linear stabilisation, e.g., local projection
  stabilisation \cite{KL09}, or subgrid viscosity
  \cite{Guermond_1999}, for example.
\end{Remark}
\section{The finite element method}\label{Sec:FEM}
				
The finite element method proposed in this work reads as follows: find
$u_h\in V_\calP$ such that
\begin{equation}
  a_{h}(u_{h};v_{h})
  =
  \langle f,v_{h}\rangle_{\Omega}\hspace{1cm}\forall v_{h}\in V_\calP\,,\label{eq19}
\end{equation}	
where the nonlinear form $a_{h}(\cdot;\cdot)$ is defined by
\begin{align}
  a_{h}(u_{h};v_{h}):=a_{J}(u_{h}^{+},v_{h})+s(u_{h}^{-},v_{h})\,.\label{FEMethod}
\end{align}
Here, $a_J(\cdot,\cdot)$ is the bilinear form defined in \eqref{eq12},
$u_h^+$ and $u_h^-$ are defined in \eqref{eq17},\eqref{eq18}. The
bilinear form $s(\cdot,\cdot)$ is added in order to control the
complementary part $u_h^-$, and is defined as follows:
\begin{align}
  s(v_{h},w_{h})=\alpha\sum_{i=1}^{N}\left(\|\mathcal{D} \|_{0,\infty,\omega_{i}}\frakh (\boldsymbol{x}_{i})^{d-2}+\|\bbeta \|_{0,\infty,\omega_{i}}\frakh (\boldsymbol{x}_{i})^{d-1}+ {\mu} 
  \frakh (\boldsymbol{x}_{i})^{d}\right)v_{h}(\boldsymbol{x}_{i})w_{h}(\boldsymbol{x}_{i})\,,\label{eq20}
\end{align}	
where the parameter $\alpha>0$ is a non-dimensional constant.  The
stabilising form $s(\cdot,\cdot)$ induces the following norm in
$V_\calP^{}$:
\begin{align}
  \| v_{h}\|_{s}=\sqrt{s(v_{h},v_{h})}\,.\label{eq21}
\end{align}

The following result, that appears as a consequence of \eqref{eq5},
shows that the stabilising bilinear form $s(\cdot,\cdot)$ indeed
controls $u_h^-$, more specifically it controls the kernel of the
projection $(\cdot)^+$.
\begin{lemma}\label{Lem:s}
  There exists a constant $C_{\rm equiv}^{}>0$, depending only on the shape regularity of $\calP$, such that
  \begin{align}
    \| v_{h}\|_{h}^{2}\leq \frac{C_{\rm equiv}}{\alpha}\|v_{h} \|_{s}^{2} \qquad\forall v_{h}\in V_\calP\,, \label{eq22}
  \end{align}
  where $\| \cdot\|_h$ is the norm defined in \eqref{norm}.
\end{lemma}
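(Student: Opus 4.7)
The plan is to split $\|v_h\|_h^2$ into its three constituent contributions (diffusion, reaction and CIP stabilisation) and bound each one, elementwise, by the corresponding term appearing in the nodal sum $\|v_h\|_s^2$. The overall idea is simply: use an inverse inequality to pass from $H^1$-type norms to $L^2$-type norms on each element, then use the mass-lumping equivalence \eqref{eq5-1} to pass from the $L^2(K)$-norm to a weighted sum of squared nodal values, and finally regroup the resulting sums by nodes, absorbing the local powers $h_K^{}$ into $\frakh(\boldsymbol{x}_i)^{}$ via shape-regularity and quasi-uniformity.

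More concretely, for the diffusion term I would estimate, on each $K\in\calP$,
\begin{equation*}
\|\mathcal{D}^{1/2}\nabla v_h^{}\|_{0,K}^2 \,\le\, \|\mathcal{D}\|_{0,\infty,K}^{}|v_h^{}|_{1,K}^{2} \,\le\, C\,\|\mathcal{D}\|_{0,\infty,K}^{}\,h_K^{-2}\,\|v_h^{}\|_{0,K}^{2},
\end{equation*}
by \eqref{inverse}, then apply \eqref{eq5-1} to dominate $\|v_h^{}\|_{0,K}^2$ by $C\sum_{i:\,\boldsymbol{x}_i\in K} h_K^d\, v_h^{2}(\boldsymbol{x}_i)$, so that after summing over $K$ and switching the order of summation (each node $\boldsymbol{x}_i$ contributes through the elements of $\omega_i$) I obtain, using shape-regularity to replace $h_K^{}$ by $\frakh(\boldsymbol{x}_i)^{}$ and $\|\mathcal{D}\|_{0,\infty,K}^{}$ by $\|\mathcal{D}\|_{0,\infty,\omega_i}^{}$,
\begin{equation*}
\|\mathcal{D}^{1/2}\nabla v_h^{}\|_{0,\Omega}^2 \,\le\, C\sum_{i=1}^{N}\|\mathcal{D}\|_{0,\infty,\omega_i}^{}\,\frakh(\boldsymbol{x}_i)^{d-2}\,v_h^{2}(\boldsymbol{x}_i)\,.
\end{equation*}
The reaction term $\|\mu^{1/2}v_h^{}\|_{0,\Omega}^{2}$ is treated in exactly the same way but without invoking \eqref{inverse}, producing the weight $\frakh(\boldsymbol{x}_i)^{d}$.

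The last and only mildly subtle piece is the CIP contribution $J(v_h^{},v_h^{})$. Here I would apply \eqref{equ35} of Lemma~\ref{lem33} to replace the facet jumps by element gradients, giving $J(v_h^{},v_h^{})\le C\gamma \sum_K h_K^{}\|\bbeta\|_{0,\infty,K}^{}|v_h^{}|_{1,K}^{2}$, and then repeat the inverse-inequality plus mass-lumping argument used for the diffusion term, producing the weight $h_K^{d-1}\|\bbeta\|_{0,\infty,K}^{}$ per element and hence, after regrouping, $\frakh(\boldsymbol{x}_i)^{d-1}\|\bbeta\|_{0,\infty,\omega_i}^{}$ per node.

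Adding the three bounds gives $\|v_h^{}\|_h^{2}\le C_{\rm equiv}^{}\,\alpha^{-1}\|v_h^{}\|_s^{2}$ with $C_{\rm equiv}^{}$ depending only on the shape-regularity constant of $\calP$, as claimed. The main (minor) obstacle is purely bookkeeping: tracking that the constants on the right-hand sides of each local estimate can be uniformly absorbed into $C_{\rm equiv}^{}$ independently of $h$, $\mathcal{D}$, $\bbeta$, $\mu$ and $\gamma$, which is what allows the single prefactor $C_{\rm equiv}^{}/\alpha$ to appear.
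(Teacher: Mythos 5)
Your proposal is correct and follows essentially the same route as the paper: inverse inequality plus the nodal equivalence \eqref{eq5-1} for the diffusion and reaction terms, and Lemma~\ref{lem33} (you use \eqref{equ35} plus one more inverse inequality, the paper uses \eqref{equ36}, which is the same thing) followed by node-wise regrouping for the CIP term. The only quibble is your closing claim that the constant is independent of $\gamma$: the paper's proof yields $C_{\rm equiv}=(1+\gamma)C$, so the dependence on the (fixed) stabilisation parameter $\gamma$ is there, though harmless.
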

\begin{proof}
  Using the inverse inequality \eqref{inverse},  {\eqref{eq5-1}, and the mesh regularity }we obtain
  \begin{align}
    \|\mathcal{D}^{\frac{1}{2}} \nabla v_{h}\|^{2}_{0,\Omega}+\lVert \mu^{\frac{1}{2}} v_{h}\lVert_{0,\Omega}^{2}\leq C\sum_{i=1}^{N}\left(\|\mathcal{D} \|_{0,\infty,\omega_{i}}\frakh (\boldsymbol{x}_{i})^{d-2}+{\mu}
    \,\frakh (\boldsymbol{x}_{i})^{d}\right)v_{h}(\boldsymbol{x}_{i})^{2}.  
  \end{align}
  Also, \eqref{equ36} and \eqref{eq5-1} yield
  \begin{align*}
    J(v_{h},v_{h})&\leq C\gamma\sum_{K\in \mathcal{P}}h_{ K}^{-1}\|\bbeta \|_{0,\infty,K} \|  v_{h}\|_{0,K}^{2}\leq C\gamma\sum_{K\in \calP}h_{ K}^{-1}h_{ K}^{d}\|\bbeta \|_{0,\infty,K}\sum_{\bx_i\in K}  v_h(\bx_i)^{2}\\&\leq C\gamma \sum_{i=1}^{N}\|\bbeta \|_{0,\infty,\omega_{i}}\frakh (\bx_i)^{d-1}v_{h}(\boldsymbol{x}_{i})^{2}.
  \end{align*}  
  Gathering the last two bounds proves \eqref{eq22} with $C_{\rm equiv}=(1+\gamma)C$.
\end{proof}
\begin{Remark}
  { The result of Lemma \ref{Lem:s} explains the
    scaling factors chosen for defining \( s(\cdot,\cdot) \).
    Additionally, our formula \eqref{eq20} differs from those used in
    reaction diffusion equations (as seen in \cite{BGPV23}) by
    including a specific term, \( \|\bbeta
    \|_{0,\infty,\omega_{i}}\frakh (\boldsymbol{x}_{i})^{d-1} \). This
    inclusion is crucial for proving \eqref{eq22}, which is important
    for both ensuring the problem is well-posed and for the error
    analysis. From our practical experience, this term also enhances
    the performance of the nonlinear solver. As for the structure of
    the stabilisation, we opted for a mass-lumped approach in defining
    \( s(\cdot,\cdot) \), largely because of the monotonicity we
    established in Lemma~\ref{l31}.
  }
\end{Remark}	
\subsection{Well-posedness}
In this section, we analyse the existence and uniqueness of solutions
for \eqref{FEMethod}. The first step is given by the following
monotonicity result, whose proof is identical to that of
\cite[Lemma~3.1]{BGPV23}.
\begin{lemma}\label{l31}
  The bilinear form $s(\cdot,\cdot)$ defined in \eqref{eq20} satisfies
  the following inequalities:
  \begin{eqnarray}
    s(v_{h}^{-}-w_{h}^{-},v_{h}^{+}-w_{h}^{+})\geq 0\qquad \forall v_{h},w_{h} \in V_{\mathcal{P}},\hspace{0.8cm}\label{eq23}\\
    s(v_{h}^{-},w_{h}-v_{h}^{+})\leq 0\qquad \forall v_{h} \in V_{\mathcal{P}},w_{h} \in V_{\mathcal{P}}^{+}\label{eq24}.
  \end{eqnarray} 
\end{lemma}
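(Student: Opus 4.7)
The plan is to exploit the mass-lumped structure of $s(\cdot,\cdot)$ to reduce both inequalities to pointwise statements at each node, and then deal with those by a short sign analysis based on the truncation map. Writing $\sigma_i := \alpha\bigl(\|\mathcal{D}\|_{0,\infty,\omega_i}\frakh(\bx_i)^{d-2}+\|\bbeta\|_{0,\infty,\omega_i}\frakh(\bx_i)^{d-1}+\mu\,\frakh(\bx_i)^{d}\bigr)>0$, the form reads
\begin{equation*}
s(v_h,w_h)=\sum_{i=1}^{N}\sigma_i\, v_h(\bx_i)\,w_h(\bx_i),
\end{equation*}
so both \eqref{eq23} and \eqref{eq24} will follow provided that the corresponding nodewise products are non-negative (respectively, non-positive).

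For this nodewise analysis, introduce the scalar maps
\begin{equation*}
f(t):=\max\{0,\min\{t,\kappa\}\},\qquad g(t):=t-f(t),
\end{equation*}
so that $v_h^+(\bx_i)=f(v_h(\bx_i))$ and $v_h^-(\bx_i)=g(v_h(\bx_i))$ by \eqref{eq17}--\eqref{eq18}. The key observation is that both $f$ and $g$ are non-decreasing (and continuous) on $\mathbb{R}$: indeed, $f$ is the truncation onto $[0,\kappa]$, and $g$ is piecewise affine with slope $1$ on $(-\infty,0)$ and $(\kappa,\infty)$ and identically zero on $[0,\kappa]$.

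To prove \eqref{eq23}, I would argue that for any $a,b\in\mathbb{R}$, the monotonicity of both $f$ and $g$ forces $f(a)-f(b)$ and $g(a)-g(b)$ to share the same sign (or be zero). Applying this with $a=v_h(\bx_i)$, $b=w_h(\bx_i)$ and multiplying by $\sigma_i>0$ yields \eqref{eq23} after summation. For \eqref{eq24}, fix $v_h\in V_\calP$ and $w_h\in V_\calP^{+}$ and analyse the three cases at each node according to whether $v_h(\bx_i)$ lies in $[0,\kappa]$, below $0$, or above $\kappa$: in the first case $g(v_h(\bx_i))=0$; in the second, $g(v_h(\bx_i))<0$ while $w_h(\bx_i)-f(v_h(\bx_i))=w_h(\bx_i)\ge 0$; in the third, $g(v_h(\bx_i))>0$ while $w_h(\bx_i)-f(v_h(\bx_i))=w_h(\bx_i)-\kappa\le 0$. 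In every case the nodal product is non-positive, so summing against $\sigma_i>0$ gives \eqref{eq24}.

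There is no real obstacle: the only slightly delicate point is recognising that $g$ (not only $f$) is monotone, which is what makes the short proof of \eqref{eq23} possible and avoids a nine-case analysis on the signs of $v_h(\bx_i)$ and $w_h(\bx_i)$. Once this is observed, both claims are immediate.
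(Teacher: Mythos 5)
Your proof is correct, and it is essentially the argument the paper intends: the paper itself gives no details, stating only that the proof is identical to that of Lemma~3.1 in \cite{BGPV23}, and that proof is precisely the reduction you carry out — the mass-lumped form $s(\cdot,\cdot)$ is diagonal in the nodal values with positive weights, so everything follows from the nodewise monotonicity of the truncation $f(t)=\max\{0,\min\{t,\kappa\}\}$ and of its complement $g(t)=t-f(t)$, together with the three-case sign check for \eqref{eq24}. Your observation that $g$ (not just $f$) is non-decreasing is exactly the point that makes \eqref{eq23} immediate, so nothing is missing.
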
	

Despite the fact that $s(\cdot,\cdot)$ is monotone, the discrete
problem \eqref{eq19} is not driven by a monotone nonlinear
mapping. So, the well-posedness of \eqref{FEMethod} needs to be proven
using different arguments to those used in \cite{BGPV23}.  We will
first prove existence that will appear as a consequence of Brouwer's
fixed point theorem, and only after linking any solution $u_h^+$ of
\eqref{eq19} to a variational inequality, we will be able to prove a
uniqueness result for $u_h^{}$.


\begin{theorem}
  \label{Theorem14}
  {Suppose that $\alpha\ge C_{\rm equiv}$. Then, }
  there exists $u_{h}\in V_{\mathcal{P}}$ that solves \eqref{eq19}.
\end{theorem}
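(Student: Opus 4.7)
The plan is to invoke Brouwer's fixed point theorem in its zero-finding form: if $\Psi$ is a continuous self-map of a finite-dimensional inner product space and $s(\Psi(u),u) > 0$ on some sphere of radius $R$, then $\Psi$ must vanish in the open ball. Since $s(\cdot,\cdot)$ is itself an inner product on $V_\calP$, I would take it as the ambient inner product and define $\Psi:V_\calP\to V_\calP$ via the Riesz relation
\begin{equation*}
s(\Psi(u_h), v_h) := a_h(u_h; v_h) - \langle f, v_h\rangle_\Omega \qquad\forall\, v_h\in V_\calP,
\end{equation*}
so that zeros of $\Psi$ are exactly the solutions of \eqref{eq19}. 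Continuity of $\Psi$ in finite dimensions is immediate since $a_J$ and $s$ are bilinear and $u_h\mapsto u_h^+$ is Lipschitz by Lemma~\ref{lem32}.

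The heart of the argument is the sphere-positivity estimate. Testing with $v_h=u_h$ and splitting $u_h=u_h^+ + u_h^-$ I would expand
\begin{equation*}
a_h(u_h; u_h) = \|u_h^+\|_h^2 + \|u_h^-\|_s^2 + a_J(u_h^+, u_h^-) + s(u_h^-, u_h^+).
\end{equation*}
The term $s(u_h^-, u_h^+)$ is non-negative by Lemma~\ref{l31} applied with $w_h=0$. For the non-symmetric cross term $a_J(u_h^+, u_h^-)$, I would proceed by termwise Cauchy-Schwarz, using $\|u_h^-\|_{0,\Omega}\leq\mu^{-1/2}\|u_h^-\|_h$ to absorb the $L^2$-factor coming from the convective contribution, thereby obtaining $|a_J(u_h^+, u_h^-)|\leq C_a \|u_h^+\|_h \|u_h^-\|_h$ with $C_a$ depending on $d_0,\bbeta,\mu,\gamma$. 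Lemma~\ref{Lem:s} then upgrades this to $C_a\sqrt{C_{\rm equiv}/\alpha}\,\|u_h^+\|_h\|u_h^-\|_s$, so that, provided $\alpha$ is sufficiently large (possibly larger than just $C_{\rm equiv}$), a Young inequality yields
\begin{equation*}
a_h(u_h; u_h) \geq c\bigl(\|u_h^+\|_h^2 + \|u_h^-\|_s^2\bigr).
\end{equation*}

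To close the argument I would bound the linear term via $|\langle f, u_h\rangle_\Omega|\leq C\|f\|_{-1,\Omega}(\|u_h^+\|_h+\|u_h^-\|_s)$, using the control of $\|\cdot\|_{1,\Omega}$ by $\|\cdot\|_h$ (modulo $d_0,\mu$) and again Lemma~\ref{Lem:s}. The final ingredient is to relate the norm on the sphere to the right-hand side of the coercivity estimate: since $u_h^+$ has nodal values in $[0,\kappa]$, an estimate analogous to \eqref{bound} produces a uniform bound $\|u_h^+\|_s\leq C(h,\kappa)$, and the reverse triangle inequality forces $\|u_h^-\|_s\geq\|u_h\|_s - C(h,\kappa)$. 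Hence taking $\|u_h\|_s=R$ large enough makes the quadratic lower bound on $a_h(u_h;u_h)$ dominate the linear contribution from $\langle f, u_h\rangle_\Omega$, yielding $s(\Psi(u_h),u_h)>0$ on the sphere, and Brouwer then supplies the required zero.

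The principal obstacle is the absence of a fixed sign for the convective cross term $a_J(u_h^+, u_h^-)$: its control is exactly what makes the weight $\|\bbeta\|_{0,\infty,\omega_i}\frakh(\bx_i)^{d-1}$ in the definition of $s(\cdot,\cdot)$ essential, since it is via the corresponding norm equivalence of Lemma~\ref{Lem:s} that $\|u_h^-\|_h$ can be absorbed into $\|u_h^-\|_s$, allowing the Brouwer coercivity to close.
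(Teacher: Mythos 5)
Your route is genuinely different from the paper's: you apply Brouwer directly in its zero-finding form to the residual map $\Psi$ defined through the $s$-inner product, whereas the paper builds a fixed-point map $T$ in which the convection is lagged (the term $(\bbeta\cdot\nabla\widehat u_h^+,v_h)_\Omega$ is moved to the right-hand side), so that each iterate solves the monotone reaction--diffusion problem of \cite{BGPV23}; well-definedness and continuity of $T$ then follow from the monotonicity result of that reference (this is precisely where the hypothesis $\alpha\ge C_{\rm equiv}$ enters), and the invariant ball is obtained by testing with $z_h^+$ and $z_h^-$ separately. Your strategy is viable and arguably more direct, but as written it contains one genuine gap.

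The gap is in the treatment of the cross term $a_J(u_h^+,u_h^-)$. Bounding it by $C_a\|u_h^+\|_h\|u_h^-\|_h$ with $C_a$ proportional to $\|\bbeta\|_{0,\infty,\Omega}\,d_0^{-1/2}\mu^{-1/2}$ and then absorbing via Lemma~\ref{Lem:s} and Young's inequality requires $\alpha\gtrsim C_a^2\,C_{\rm equiv}$, a condition depending on the physical data that is far stronger than the stated hypothesis $\alpha\ge C_{\rm equiv}$ in the convection-dominated regime where $d_0$ is small; you acknowledge this yourself, which means the argument as written proves a weaker theorem than the one stated. The repair stays entirely within your framework: do not aim for the quadratic coercivity $a_h(u_h;u_h)\ge c\bigl(\|u_h^+\|_h^2+\|u_h^-\|_s^2\bigr)$ at all. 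Since $u_h^+$ has nodal values in $[0,\kappa]$, inequality \eqref{bound} of Lemma~\ref{lem32} and its analogue for $\|u_h^+\|_s$ give $\|u_h^+\|_h\le C(h,\kappa)$ uniformly over all of $V_\calP$, and the convective part of the cross term should be integrated by parts so that $|(\bbeta\cdot\nabla u_h^+,u_h^-)_\Omega|=|(u_h^+,\bbeta\cdot\nabla u_h^-)_\Omega|\le C\kappa\|\bbeta\|_{0,\infty,\Omega}d_0^{-1/2}\|u_h^-\|_h$; this is exactly the manipulation the paper performs when testing with $z_h^-$. Every term involving $u_h^+$ is then bounded by a constant times $\|u_h^-\|_s$ with a constant independent of $u_h$, so on the sphere $\|u_h\|_s=R$ the positive term $\|u_h^-\|_s^2\ge(R-C(h,\kappa))^2$ dominates all remaining contributions, which grow at most linearly in $R$, and Brouwer applies for $R$ large without any strengthening of the hypothesis on $\alpha$.
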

\begin{proof}
  We begin by defining the bilinear form
  \begin{align*}
    \tilde{a}_{J}(v_{h},w_{h})
    :=
    \left(\mathcal{D}\nabla v_{h},\nabla w_{h}\right)_{\Omega}
    +
    \mu(v_{h},w_{h})_{\Omega}
    +
    J(v_{h},w_{h})\hspace{1cm}\forall v_{h},w_{h}\in V_{\mathcal{P}},
  \end{align*}
  and the mapping 
  \begin{align*}
    T:&V_{\mathcal{P}}\longrightarrow V_{\mathcal{P}},\\
    \widehat{u}_{h}&\longrightarrow u_{h}=T(\widehat{u}_{h}),
  \end{align*}
  where $u_{h}=T(\widehat{u}_{h})$ solves the following equation
  \begin{align}
    \tilde{a}_{J}(u_{h}^{+},v_{h})+s(u_{h}^{-},v_{h})=\langle f,v_{h}\rangle_{\Omega} -
    (\bbeta \cdot \nabla {\widehat{u}_{h}^+},v_{h})_{\Omega}\qquad\forall v_{h}\in V_{\mathcal{P}}.\label{eq49}
  \end{align}
  We observe that $u_h$ solves \eqref{eq19} if and only if
  $T(u_h)=u_h$. So, the proof will consist on proving that $T$
  satisfies the hypotheses of Brouwer's fixed point Theorem
  \cite[Theorem~10.41]{renardy2006introduction}.

  \noindent i) \underline{$T$ is well-defined:} To prove that $T$ is
  well-defined, we see that \eqref{eq49} is a particular example of
  the method proposed in \cite{BGPV23}. So, using
  \cite[Theorem~3.2]{BGPV23}, there exists a unique solution $u_{h}\in
  V_\calP$ of \eqref{eq49}, and thus $T$ is well-defined.
  
\noindent ii) \underline{ $T$ is continuous:} 
{Since we have supposed that $\alpha$ is large enough, we can use the
monotonicity result proven in \cite[Theorem~3.2]{BGPV23} and obtain that, for all
$v_h,w_h\in V_\calP$
\begin{equation*}
  \tilde{a}_{J}(v_{h}^{+}-w_{h}^{+},v_{h}-w_{h})+s(v_{h}^{-}-w_{h}^{-},v_{h}-w_{h})\geq 
  C\|v_{h}-w_{h} \|_{h}^{2}\,,
\end{equation*}
where $C>0$ is independent of $h$.}
Next, suppose that for $\widehat{v}_{h}, \widehat{w}_{h}\in V_{\mathcal{P}}$
and let $v_{h}=T(\widehat{v}_h)$ and $w_{h}=T(\widehat{w}_h)$.  Integrating by
parts, using {H\"older's} inequality, Lemma \ref{lem32}, and \eqref{eq22}
we obtain
\begin{align*}
  C\|v_{h}-w_{h} \|_{h}^{2}&\leq	\tilde{a}_{J}(v_{h}^{+}-w_{h}^{+},v_{h}-w_{h})+s({v_{h}^{-}-w_{h}^{-}},v_{h}-w_{h})\\                                      
  &= -(\bbeta\cdot \nabla (\widehat{v}_{h}^{+}-\widehat{w}_{h}^{+}),v_{h}-w_{h})_{\Omega}\\
  & =(\widehat{v}_{h}^{+}-\widehat{w}_{h}^{+},\bbeta\cdot \nabla(v_{h}-w_{h}) )_{\Omega}\\
  &\leq C\| \bbeta\|_{0,\infty,\Omega}\|\widehat{v}_{h}-\widehat{w}_{h} \|_{0,\Omega}|v_{h}-w_{h}  |_{1,\Omega}\\&\leq 
  C\|\bbeta\|_{0,\infty,\Omega}\|\widehat{v}_{h}-\widehat{w}_{h} \|_{0,\Omega}\|\mathcal {D}^{-\frac{1}{2}}\|_{0,\infty,\Omega}\|\mathcal {D}^{\frac{1}{2}}\nabla(v_{h}-w_{h} ) \|_{0,\Omega}
  \\&\leq C\frac{\|\bbeta\|_{0,\infty,\Omega}}{d_{0}^{\frac{1}{2}}} \|\widehat{v}_{h}-\widehat{w}_{h}\|_{0,\Omega}\|v_{h}-w_{h} \|_{h}.
\end{align*}
Therefore
\begin{align*}
  \|T(\widehat{v}_{h})-T(\widehat{w}_{h}) \|_{h}\leq C\frac{\|\bbeta\|_{0,\infty,\Omega}}{d_{0}^{\frac{1}{2}}}\|\widehat{v}_{h}-\widehat{w}_{h}\|_{0,\Omega},
\end{align*}
and $T$ is Lipschitz continuous.

\noindent iii) \underline{There exists $R>0$, such that
  $T(B(0,R))\subseteq B(0,R)$:} Let $\widehat{z}_{h}\in V_\calP$ be
arbitrary and $z_{h}=T(\widehat{z}_{h})$. By using $v_{h}=z_{h}^{+}$ in
\eqref{eq49}, Cauchy-Schwarz's and H\"older's inequalities, and \eqref{bound} we get
{
\begin{align*}
  \tilde{a}_{J}(z_{h}^{+},z_{h}^{+})+\underbrace{s(z_{h}^{-},z_{h}^{+})}_{\ge 0} &=\langle f,z_{h}^{+}\rangle_{\Omega} - (\bbeta \cdot \nabla \widehat{z}_{h}^{+},z_{h}^{+})_{\Omega}^{}\\
 & \le \|f \|_{0,\Omega}\|z_{h}^{+} \|_{0,\Omega} 
+\, (\widehat{z}_{h}^{+},\bbeta \cdot \nabla z_{h}^{+})_{\Omega}^{}\\
  &\le C\, \|f \|_{0,\Omega}\,\mu^{-\frac{1}{2}}\|z_h^+\|_h^{}+ 
 \|\bbeta\|_{0,\infty,\Omega}^{}\|\widehat{z}_{h}^{+}\|_{0,\Omega}^{}d_0^{-\frac{1}{2}}
  \|z_h^+\|_h^{}\\
  &\le C\,\left\{ \frac{\|f \|_{0,\Omega}}{\mu^{\frac{1}{2}}} + 
  \frac{\|\bbeta\|_{0,\infty,\Omega}^{}\,\kappa}{d_0^{\frac{1}{2}}}\right\}\, \|z_h^+\|_h^{}\,,
\end{align*}
and so $z_{h}^{+}$ satisfies
\begin{equation}\label{Bound-zh+}
  \|z_{h}^{+} \|_{h}\leq C\,\left\{ \frac{\|f \|_{0,\Omega}}{\mu^{\frac{1}{2}}} + 
  \frac{\|\bbeta\|_{0,\infty,\Omega}^{}\,\kappa}{d_0^{\frac{1}{2}}}\right\}\,.
\end{equation}}
Next, we take $v_{h}=z_{h}^{-}$ in \eqref{eq49}.  Integrating by
parts and using H\"older's inequality we get
\begin{align*}
  \tilde{a}_{J}(z_{h}^{+},z_{h}^{-})+s(z_{h}^{-},z_{h}^{-})
  &=
  \langle f,z_{h}^{-}\rangle_{\Omega} -(\bbeta \cdot \nabla {\widehat{z}_{h}^{+}},z_{h}^{-})_{\Omega}
  \\
  &\leq
  \|f \|_{0,\Omega}\|z_{h}^{-} \|_{0,\Omega}
  +
  \|{\widehat{z}_{h}^{+}} \|_{0,\Omega}\| \bbeta\|_{0,\infty,\Omega}|z_{h}^{-} |_{1,\Omega}.
\end{align*}
Now using \eqref{bound} we have
\begin{align*}
  \tilde{a}_{J}(z_{h}^{+},z_{h}^{-})+s(z_{h}^{-},z_{h}^{-})
  &\leq
  C\left(\|f \|_{0,\Omega}\|z_{h}^{-} \|_{0,\Omega}
  +
  \kappa \| \bbeta\|_{0,\infty,\Omega}|z_{h}^{-} |_{1,\Omega}\right)
  \\&\leq 
  C \left(\frac{\|f\|_{0,\Omega}}{\mu^{{1}/{2}}}
  +
  \kappa\frac{\| \bbeta\|_{0,\infty,\Omega}}{d_{0}^{{1}/{2}}}\right)\|z_{h}^{-} \|_{h},
\end{align*}
and {applying \eqref{eq22} and  Young's inequality we get}
\begin{align*}
  \tilde{a}_{J}(z_{h}^{+},z_{h}^{-})+s(z_{h}^{-},z_{h}^{-})
  &\leq
  C \left(\frac{\|f\|_{0,\Omega}}{\mu^{{1}/{2}}}
  +
  \kappa\frac{\| \bbeta\|_{0,\infty,\Omega}}{d_{0}^{{1}/{2}}}\right)^{2}
  +
  \frac{s(z_{h}^{-},z_{h}^{-})}{2}
  \\
  &=:
  \frac M2
  +
  \frac{s(z_{h}^{-},z_{h}^{-})}{2}.
\end{align*}
Using Young's and Cauchy-Schwarz's inequalities for
$\tilde{a}_{J}(z_{h}^{+},z_{h}^{-})$, and \eqref{Bound-zh+}, yields
\begin{align*}
  -\delta 	\tilde{a}_{J}(z_{h}^{-},z_{h}^{-})+s(z_{h}^{-},z_{h}^{-})\leq M +C \delta^{-1}\tilde{a}_{J}(z_{h}^{+},z_{h}^{+})\leq M+{C\,\, \delta^{-1} \left\{ \frac{\|f \|_{0,\Omega}}{\mu^{\frac{1}{2}}} + 
  \frac{\|\bbeta\|_{0,\infty,\Omega}^{}\,\kappa}{d_0^{\frac{1}{2}}}\right\}}\,,
\end{align*}
for any $\delta>0$. Then, choosing $\delta$ small enough, and
Lemma~\ref{Lem:s} we get
\begin{align*}
  \|z_{h}^{-} \|_{h}
  \leq
  C \big(-\delta	\tilde{a}_{J}(z_{h}^{-},z_{h}^{-})+s(z_{h}^{-},z_{h}^{-}) \big)
  \leq C_{2}(f,\mu,\mathcal {D},\bbeta,\kappa),
\end{align*}
where 
\begin{equation*}{
C_{2}(f,\mu,\mathcal {D},\bbeta,\kappa)=M+ C\, \left\{ \frac{\|f \|_{0,\Omega}}{\mu^{\frac{1}{2}}} + 
  \frac{\|\bbeta\|_{0,\infty,\Omega}^{}\,\kappa}{d_0^{\frac{1}{2}}}\right\}\,.}
  \end{equation*}
  Hence, $z_h=T(\widehat{z}_h)$ satisfies the
following (uniform) bound
\begin{align*}
  \|z_{h} \|_{h}&\leq \|z_{h}^{-} \|_{h} +\|z_{h}^{+} \|_{h} \leq  
C\, \left\{ \frac{\|f \|_{0,\Omega}^{}}{\mu^{\frac{1}{2}}} + 
  \frac{\|\bbeta\|_{0,\infty,\Omega}^{}\,\kappa}{d_0^{\frac{1}{2}}}\right\}
    +C_{2}^{}(f,\mu,\mathcal {D},\bbeta,\kappa)=:R.
\end{align*}
Therefore, $z_{h}=T(\widehat{z}_{h})\in B(0,R)$, for every $\widehat{z}_{h}\in
V_{\mathcal{P}}$, which shows that $T(B(0,R))\subseteq B(0,R)$.

Hence, using Brouwer's fixed point theorem, there exists $u_{h}\in
V_{\mathcal{P}}$ such that $T(u_{h})=u_{h}$. In other words, problem
\eqref{eq19} has at least one solution.
\end{proof}


The proof of the last result does not imply uniqueness of
solutions. The next two results will close that gap, whilst at the
same time providing a very useful characterisation for $u_h^+$.
 
\begin{lemma}\label{lemma5}
  Let $u_{h}\in V_\calP$ solve \eqref{eq19}. Then, $u_{h}^{+}\in V_\calP^+$ satisfies
  \begin{align}
    a_{J}(u_{h}^{+},v_{h}-u_{h}^{+})\geq \langle f,v_{h}-u_{h}^{+}\rangle_{\Omega}\qquad\forall v_{h}\in V_{\mathcal{P}}^{+},\label{variational}
  \end{align}
  where $a_{J}(\cdot,\cdot)$ is defined in (\ref{eq12}). In addition, $u^{-}_{h}$ is the unique solution of
  \begin{align}
    s(u_{h}^{-},v_{h})=\langle f,v_{h}\rangle-a_{J}(u_{h}^{+},v_{h}) \qquad\forall v_{h}\in V_{\mathcal{P}}.\label{Sunique}
  \end{align}  
\end{lemma}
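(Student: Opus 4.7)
The plan is to obtain both statements essentially by rearranging \eqref{eq19} and invoking the monotonicity property \eqref{eq24} from Lemma~\ref{l31}. I would handle the equation for $u_h^-$ first, since it is the most direct, and then deduce the variational inequality for $u_h^+$ from it.

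For \eqref{Sunique}, the identity is immediate: since $u_h$ solves \eqref{eq19}, by the definition \eqref{FEMethod} of $a_h$ we have $a_J(u_h^+,v_h)+s(u_h^-,v_h)=\langle f,v_h\rangle_\Omega$ for every $v_h\in V_\calP$, and rearranging yields \eqref{Sunique}. For uniqueness, I would note that $s(\cdot,\cdot)$ induces a genuine norm on $V_\calP$ (see \eqref{eq21}); hence if $w_h\in V_\calP$ satisfies $s(w_h,v_h)=0$ for all $v_h\in V_\calP$, testing with $v_h=w_h$ forces $\|w_h\|_s=0$, so $w_h=0$. Applied to the difference of two putative solutions of \eqref{Sunique} (with $u_h^+$ held fixed), this gives the claimed uniqueness.

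For the variational inequality \eqref{variational}, given any $v_h\in V_\calP^+$, the difference $v_h-u_h^+$ belongs to $V_\calP$ and is therefore an admissible test function in \eqref{eq19}. Substituting it yields
\begin{equation*}
a_J(u_h^+,v_h-u_h^+)+s(u_h^-,v_h-u_h^+)=\langle f,v_h-u_h^+\rangle_\Omega.
\end{equation*}
The inequality \eqref{eq24} of Lemma~\ref{l31}, applied with $v_h$ there equal to our $u_h$ and $w_h$ there equal to our $v_h\in V_\calP^+$, gives precisely $s(u_h^-,v_h-u_h^+)\le 0$. Moving this non-positive term to the right-hand side produces \eqref{variational}.

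No real obstacle is expected: the whole argument is a bookkeeping exercise that exploits the decomposition $u_h=u_h^++u_h^-$ and the sign property \eqref{eq24}, which is the key structural feature of the mass-lumped stabilisation $s(\cdot,\cdot)$. The only point worth being careful about is to make sure that in invoking \eqref{eq24} the test function lies in $V_\calP^+$ (as required), which is precisely our hypothesis on $v_h$, and that in proving uniqueness for \eqref{Sunique} one fixes $u_h^+$ first so that the problem becomes linear in the unknown complementary part.
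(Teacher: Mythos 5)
Your proposal is correct and follows essentially the same route as the paper: test \eqref{eq19} against $v_h-u_h^+$ (the paper tests against $v_h$ and $u_h^+$ separately and subtracts, which is the same thing by linearity), then discard the term $s(u_h^-,v_h-u_h^+)\le 0$ via \eqref{eq24}, and obtain \eqref{Sunique} by rearranging \eqref{eq19} together with the ellipticity of $s(\cdot,\cdot)$ in the $\|\cdot\|_s$-norm.
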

\begin{proof}
  Testing \eqref{eq19} with $v_{h}^{}\in V_\calP^+$ and $u_h^+$  as test functions gives
  \begin{align*}
    a_{J}(u_{h}^{+},v_h)+s(u_{h}^{-},v_h)= \langle f,v_h\rangle_{\Omega},\\
    a_{J}(u_{h}^{+},u_{h}^{+})+s(u_{h}^{-},u_{h}^{+})= \langle f,u_{h}^{+}\rangle_{\Omega}.
  \end{align*} 
  Subtracting the second equation from the first one we arrive at
  \begin{align*}
    a_J(u_{h}^{+},v_h-u_{h}^{+})+s(u_{h}^{-},v_h-u_{h}^{+})=\langle f,v_h-u_{h}^{+}\rangle_{\Omega}\hspace{0.5cm}\forall v_{h}\in V_{\mathcal{P}}^{+},
  \end{align*} 
  and then, using (\ref{eq24}) in Lemma \ref{l31} we get that
  $u_{h}^{+}\in V_{\mathcal{P}}^{+}$ satisfies (\ref{variational}).
  Finally, since $s(\cdot,\cdot)$ is an elliptic bilinear form in
  $V_\calP$, $u_h^-$ is the unique solution of \eqref{Sunique}, thus
  concluding the proof.
\end{proof}		

The last result provides a characterisation of any solution of
\eqref{eq19} as the solution of the two successive problems
\eqref{variational}, \eqref{Sunique}.  

\begin{corollary}\label{corollary}
  Problem \eqref{eq19} has a unique solution.	
\end{corollary}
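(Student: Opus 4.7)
The plan is to package the existence theorem together with Lemma~\ref{lemma5} and invoke the Lions--Stampacchia theorem for variational inequalities. Existence was already established in Theorem~\ref{Theorem14}, so only uniqueness remains to be proven. The key observation is that Lemma~\ref{lemma5} provides a ``decoupling'' of any solution: if $u_h$ solves \eqref{eq19}, then its constrained part $u_h^+$ solves the variational inequality \eqref{variational} on the convex set $V_\calP^+$, and its complementary part $u_h^-$ is then uniquely determined from $u_h^+$ by \eqref{Sunique}. So uniqueness of $u_h$ reduces to uniqueness of $u_h^+$, which in turn reduces to uniqueness for \eqref{variational}.

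For the uniqueness of the variational inequality, I would verify the hypotheses of the Lions--Stampacchia theorem (see, e.g., Chapter~II of Kinderlehrer--Stampacchia, or \cite[Lemma~25.2]{EG21-II} and its variational-inequality counterpart) applied to the ambient Hilbert space $V_\calP^{}$ equipped with the $\|\cdot\|_h^{}$ norm: the set $V_\calP^+$ is non-empty (contains $0$), convex, and closed; the bilinear form $a_J^{}$ is continuous (standard bound using $\|\mathcal{D}\|_{0,\infty,\Omega}^{}$, $\|\bbeta\|_{0,\infty,\Omega}^{}$, $\mu$, together with \eqref{equ35}); and coercive since, by the skew-symmetry coming from $\mathrm{div}\,\bbeta=0$ together with homogeneous Dirichlet data, one has $a_J^{}(v_h^{},v_h^{})=\|v_h^{}\|_h^2$. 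The Lions--Stampacchia theorem then yields a unique $u_h^+\in V_\calP^+$ solving \eqref{variational}, independent of the particular solution $u_h^{}$ of \eqref{eq19} that generated it.

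To finish, suppose $u_h^{}$ and $\tilde u_h^{}$ both solve \eqref{eq19}. By Lemma~\ref{lemma5}, both $u_h^+$ and $\tilde u_h^+$ solve \eqref{variational}, so uniqueness of that variational inequality forces $u_h^+=\tilde u_h^+$. Consequently the right-hand side of \eqref{Sunique} is the same for both, and since $s(\cdot,\cdot)$ is an inner product on $V_\calP^{}$, the unique solvability stated in Lemma~\ref{lemma5} gives $u_h^-=\tilde u_h^-$. Hence $u_h^{}=u_h^++u_h^-=\tilde u_h^++\tilde u_h^-=\tilde u_h^{}$.

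The mildly non-trivial step is the appeal to Lions--Stampacchia for the non-symmetric form $a_J^{}$; the elementary Stampacchia projection argument does not apply directly, but the classical extension to continuous coercive (not necessarily symmetric) bilinear forms, proven via a Banach fixed-point argument on a suitable contraction built from the projection onto $V_\calP^+$, does. Everything else (convexity of $V_\calP^+$, coercivity of $a_J^{}$, ellipticity of $s$) has already been recorded earlier in the paper.
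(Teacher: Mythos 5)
Your argument is correct and follows essentially the same route as the paper: both reduce uniqueness of $u_h^{}$ to uniqueness of the variational inequality \eqref{variational} via Lemma~\ref{lemma5}, invoke Stampacchia's theorem (in its non-symmetric, coercive form) for $u_h^+$, and then recover $u_h^-$ uniquely from \eqref{Sunique} by ellipticity of $s(\cdot,\cdot)$. Your explicit verification of the Lions--Stampacchia hypotheses is a welcome elaboration of what the paper leaves to a citation.
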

\begin{proof}
  If $u_{1},u_{2}\in V_{\mathcal{P}}$ satisfy (\ref{eq19}), then
  $u_{1}^{+}$ and $u_{2}^{+}$ satisfy (\ref{variational}). But the
  solution of (\ref{variational}) is unique, thanks to Stampacchia's
  Theorem (see \cite[Theorem~2.1]{Stampacchia}). So,
  $u_{1}^{+}=u_{2}^{+}$. Therefore, (\ref{Sunique}) holds for
  $u_{1}^{-}$ and $u_{2}^{-}$, but $u_{1}^{+}=u_{2}^{+}$ shows that
  the right hand side of both problems are the same, and since
  $s(\cdot,\cdot)$ is an elliptic bilinear form, then
  $u_{1}^{-}=u_{2}^{-}$. Thus, $u_1=u_1^++u_1^- = u_2^++u_2^- = u_2$.
\end{proof}

\begin{Remark}\label{Rem-uh-}	
  We finish this section by remarking that the complementary part
  $u_h^-$ of $u_h^{}$ has a local support. In fact, we first remark that
  \begin{align*}
    (u_{h}^{+}+u_{h}^{-})^{+}=(u_{h}^{+}+u_{h}-u_{h}^{+})^{+}=u_{h}^{+}.
  \end{align*}
  This implies that $u_{h}^{-}(\boldsymbol{x}_{i})\neq 0$ if and only
  if $u_{h}^{+}(\boldsymbol{x}_{i})=\kappa$ or
  $u_{h}^{+}(\boldsymbol{x}_{i})=0$.  This shows that the support of
  $u_h^-$ is contained in the region where $u_h^+=0$ or
  $u_h^+=\kappa$. In other words, $u_h^-$ has a localised support,
  restricted to the regions where the constraint imposed in the
  definition of $V_\calP^+$ is active.
\end{Remark}

\section{Error analysis}\label{Sec:Error}

This section is devoted to the error analysis of the method
\eqref{eq19}.  Since our interest in this work is to provide a
discrete solution that respects the bounds given by the continuous
problem, error estimates will be proven for the constrained part
$u_h^+$.
\begin{theorem}\label{Th:error}
  Let $u\in H^{k+1}(\Omega)\cap H^{1}_{0}(\Omega)$ be the solution of
  \eqref{CDR} and $u_{h}\in V_{\mathcal{P}}$ be the solution of
  \eqref{eq19}. Then, there exists $C>0$ independent of $\mathcal{D}$,
  $\mu$, $\bbeta$, and $h$, such that
  \begin{equation}
    \|u-u_{h}^{+}\|_{h}	\leq  Ch^{k}\left(\|\mathcal{D} \|_{0,\infty,\Omega}^{\frac{1}{2}}+\mu^{-\frac{1}{2}}\|\bbeta\|_{0,\infty,\Omega}+h^{\frac{1}{2}}\|\bbeta\|_{0,\infty,\Omega}^{\frac{1}{2}}+h\mu^{\frac{1}{2}}\right)|u |_{k+1,\Omega}.\label{error}
  \end{equation} 
\end{theorem}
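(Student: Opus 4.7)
The plan is to exploit the variational characterisation of $u_h^+$ provided by Lemma~\ref{lemma5}, namely $a_J(u_h^+, v_h - u_h^+) \ge \langle f, v_h - u_h^+\rangle_\Omega^{}$ for every $v_h \in V_\calP^+$. Under Assumption~(A1) and the Sobolev embedding $H^{k+1}(\Omega)\hookrightarrow C^0(\overline\Omega)$ (which holds for $k\ge 1$ and $d\le 3$), the nodal values of $u$ lie in $[0,\kappa]$, so $i_h u\in V_\calP^+$ is an admissible test function. Combining this inequality with the continuous identity $\langle f,w\rangle_\Omega^{}=a(u,w)$ for $w\in H^1_0(\Omega)$, and with the strong consistency $J(u,\cdot)=0$ (since $\nabla u$ has continuous traces across interior facets), I would rewrite it as $a_J(u-u_h^+, i_h u - u_h^+)\le 0$, and then split $i_h u - u_h^+ = -(u-u_h^+) + (u-i_h u)$ to obtain the key inequality
\begin{equation*}
  a_J(u-u_h^+, u-u_h^+) \le a_J(u-u_h^+, u-i_h u)\,.
\end{equation*}

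Next, I would identify the left-hand side with $\|u-u_h^+\|_h^2$, using the fact that $(\bbeta\cdot\nabla w, w)_\Omega^{}=0$ for $w\in H^1_0(\Omega)$ (by ${\rm div}\,\bbeta=0$ and integration by parts). It then remains to bound the four contributions of $a_J(u-u_h^+, u-i_h u)$ individually. The diffusion and reaction pieces are handled by Cauchy--Schwarz and \eqref{lagranges}, delivering contributions of the form $C\|\mathcal{D}\|_{0,\infty,\Omega}^{1/2} h^k |u|_{k+1,\Omega}^{}$ and $C\mu^{1/2} h^{k+1}|u|_{k+1,\Omega}^{}$ respectively, each multiplied by $\|u-u_h^+\|_h^{}$. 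For the penalty term $J(u-u_h^+, u-i_h u)$, I would use the Cauchy--Schwarz inequality for the semi-inner product $J$: the factor $J(u-u_h^+, u-u_h^+)^{1/2}$ is controlled by $\|u-u_h^+\|_h^{}$; strong consistency gives $J(u-i_h u, u-i_h u)=J(i_h u, i_h u)$, which combined with \eqref{equ35} and \eqref{lagranges} yields $C h^{k+1/2}\|\bbeta\|_{0,\infty,\Omega}^{1/2}|u|_{k+1,\Omega}^{}$.

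The main obstacle is the convective term $(\bbeta\cdot\nabla(u-u_h^+), u-i_h u)_\Omega^{}$. A naive Cauchy--Schwarz against $|u-u_h^+|_{1,\Omega}^{}$ would introduce a spurious factor $d_0^{-1/2}$ that is absent from the target bound. To sidestep this, I would integrate by parts, which is legitimate since ${\rm div}\,\bbeta=0$ and both arguments vanish on $\partial\Omega$, rewriting the term as $-(u-u_h^+,\bbeta\cdot\nabla(u-i_h u))_\Omega^{}$. The estimate $\|u-u_h^+\|_{0,\Omega}^{}\le \mu^{-1/2}\|u-u_h^+\|_h^{}$ together with $|u-i_h u|_{1,\Omega}^{}\le C h^k|u|_{k+1,\Omega}^{}$ then produces precisely the $\mu^{-1/2}\|\bbeta\|_{0,\infty,\Omega}$ dependence appearing in the statement. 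Summing the four bounds, factoring out $\|u-u_h^+\|_h^{}$ and dividing yields \eqref{error}.
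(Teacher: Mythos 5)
Your argument is correct, and it reaches \eqref{error} by a genuinely shorter route than the paper. The paper first splits $u-u_h^+=\eta_h+e_h$ with $\eta_h=u-\pi(u)$ the $L^2$-projection error, writes $\|e_h\|_h^2={\rm I}+{\rm II}$, bounds the convective part of ${\rm I}$ via the orthogonality of $\pi$ and the CIP property \eqref{equ32} (obtaining the $h^{k+1/2}\|\bbeta\|_{0,\infty,\Omega}^{1/2}$ contribution), and handles ${\rm II}$ by testing the variational inequality \eqref{variational} with $(\pi(u))^+$ — which forces it to control $(\pi(u))^--(i_h(u))^-$ through the Lipschitz continuity of $(\cdot)^\pm$ (Lemma~\ref{lem32}) and inverse inequalities; the $\mu^{-1/2}\|\bbeta\|_{0,\infty,\Omega}h^k$ term enters there. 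You instead test \eqref{variational} directly with the admissible function $i_hu\in V_\calP^+$ and obtain the one-line C\'ea-type inequality $a_J(u-u_h^+,u-u_h^+)\le a_J(u-u_h^+,u-i_hu)$, after which all four terms are bounded exactly as you describe; the integration by parts of the convective term against $\mu^{-1/2}\|u-u_h^+\|_h$ reproduces the same $\mu^{-1/2}\|\bbeta\|_{0,\infty,\Omega}$ factor that the paper incurs in its term (f), so the final bounds coincide. Your route dispenses entirely with Lemma~\ref{lem32} and the $(\pi(u))^\pm$ bookkeeping; what the paper's structure buys is a clean separation showing that the sharper convection-dominated rate $h^{k+1/2}\|\bbeta\|_{0,\infty,\Omega}^{1/2}$ is attained by the approximation part and that the $\mu$-dependent term originates solely from the constraint/consistency part ${\rm II}$, which is informative if one hopes to sharpen the estimate later. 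Two small remarks: your displayed split ``$i_hu-u_h^+=-(u-u_h^+)+(u-i_hu)$'' has the signs reversed (it should be $(u-u_h^+)-(u-i_hu)$), though the key inequality you then state is the correct one; and the application of the $J$-bound \eqref{equ35} to $u-i_hu$, which is not a finite element function, needs the trace inequality \eqref{trace} with the $|u|_{2,K}$ term retained rather than the inverse estimate — the same (harmless) liberty the paper takes when bounding $J(\eta_h,\eta_h)$.
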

\begin{proof}
  As usual we decompose the error $ u-u_{h}^{+}$ as follows
  \begin{align}
    u-u_{h}^{+}=(u-\pi(u))+(\pi(u)-u_{h}^{+})=:\eta_{h}+e_{h}\,,
  \end{align}
  where $\pi$ is the $L^2(\Omega)$-orthogonal projection defined in
  \eqref{def:projection}.
  
  The bound for $\eta_h$ is a direct consequence of \eqref{proj1} and
  \eqref{equ35}. In fact, using the Cauchy-Schwarz and Young
  inequalities we get
  \begin{align*}
    \|\eta_{h}\|_{h}^{2}&=a_{J}(\eta_{h},\eta_{h})= (\mathcal{D}\nabla\eta_{h},\nabla\eta_{h})_{\Omega}+\mu (\eta_{h},\eta_{h})_{\Omega}+J(\eta_{h},\eta_{h})\\
    &\leq C \left(\|\mathcal{D} \|_{0,\infty,\Omega}|\eta_{h} |_{1,\Omega}^{2}+\mu \|\eta_{h} \|_{0,\Omega}^{2}+h\|\bbeta \|_{0,\infty,\Omega}|\eta_h |_{1,\Omega}^{2}\right)\\
    &\leq C h^{2k}\left(\|\mathcal{D} \|_{0,\infty,\Omega}+h\|\bbeta \|_{0,\infty,\Omega}+h^{2}\mu \right)|u |_{k+1,\Omega}^{2}.
  \end{align*}		
  
  Next, to bound $\|e_{h}\|_{h}$  we use the ellipticity of $a_J(\cdot,\cdot)$ to get
  \begin{equation}\label{eh=I+II}
    \|e_{h} \|_{h}^{2}=a_{J}^{}(\eta_{h},e_{h})-a_{J}^{}(u-u_{h}^{+},\pi(u)-u_{h}^{+})=:{\rm I}+{\rm II}.
  \end{equation}
  We start decomposing I as follows:
  \begin{equation}\label{aux-error-1}
    {\rm I}
    =
    (\mathcal{D}\nabla\eta_{h},\nabla e_{h})_{\Omega}+(\bbeta\cdot \nabla\eta_{h},e_{h})_{\Omega}+\mu (\eta_{h},e_{h})_{\Omega}+J(\eta_{h},e_{h})
    =:
    {\rm (a)+(b)+(c)+(d)}\,,
  \end{equation}	
  and bound each one of the above terms separately. Firstly, by
  Cauchy-Schwarz inequality, then using \eqref{proj1} we have
  \begin{equation}
    \label{eq:a-bound}
      {\rm (a)}
      \leq
      \| \mathcal{D}\|_{0,\infty,\Omega}^{\frac{1}{2}}|\eta_{h} |_{1,\Omega}\|\mathcal{D}^{\frac{1}{2}}\nabla e_{h} \|_{0,\Omega}
     \leq
      Ch^{k}\| \mathcal{D}\|_{0,\infty,\Omega}^{\frac{1}{2}}|u |_{k+1,\Omega}\| e_{h} \|_{h}\,.
  \end{equation}
  To bound (b) we first integrate by parts,
  use the orthogonality of $\pi$, and Lemma~\ref{lem33}.
  \begin{equation}
    \label{eq:b-bound}
    \begin{split}
      {\rm (b)}
      &=
      (\pi(u)-u,\bbeta\cdot\nabla e_{h})_{\Omega}
      \\
      &=
      \big(\pi(u)-u, \bbeta\cdot\nabla e_{h} -\pi(\bbeta\cdot\nabla e_{h})\big)_{\Omega}
      \\
      &\leq
      \|\eta_{h} \|_{0,\Omega}\| \bbeta\cdot\nabla e_{h} -\pi(\bbeta\cdot\nabla e_{h})\|_{0,\Omega}
      \\
      &\leq Ch^{k+\frac{1}{2}}\|\bbeta\|_{0,\infty,\Omega}^{\frac{1}{2}}|u |_{k+1,\Omega}\,\|e_h\|_h\,.
    \end{split}
  \end{equation}
  The term (c) is handled analagously to (a)
  \begin{equation}
    \label{eq:c-bound}
      {\rm (c)}
      \leq
      \mu^{\frac{1}{2}}\|\eta_{h} \|_{0,\Omega}\| \mu^{\frac{1}{2}}e_{h}\|_{0,\Omega}
      \leq
      C h^{k+1}\mu^{\frac{1}{2}}|u |_{k+1,\Omega}\| e_{h}\|_h\,.
  \end{equation}
  Finally, for (d), since $J(\cdot,\cdot)$ is semi-positive definite
  and symmetric we apply Cauchy-Schwarz's inequality followed by
  Lemma~\ref{lem33} and \eqref{proj1}
  \begin{equation}
    \label{eq:d-bound}
    \begin{split}
      {\rm (d)}
      &\leq
      J(\eta_{h},\eta_{h})^{\frac{1}{2}}J(e_{h},e_{h})^{\frac{1}{2}}
      \\
      &\leq Ch^{k+\frac{1}{2}}\|\bbeta\|_{0,\infty,\Omega}^{\frac{1}{2}}|u |_{k+1,\Omega}\,\|e_h\|_h\,.
    \end{split}
  \end{equation}
  Substituting \eqref{eq:a-bound}, \eqref{eq:b-bound},
  \eqref{eq:c-bound}, and \eqref{eq:d-bound} into \eqref{aux-error-1} we
  obtain the following bound for I:
  \begin{equation}\label{I1}
    \textrm{I}
    \leq
    Ch^{k}\left( \|\mathcal{D} \|_{0,\infty,\Omega}^{\frac{1}{2}}
    +
    h^{\frac{1}{2}}\|\bbeta\|_{0,\infty,\Omega}^{\frac{1}{2}}
    +
    h\mu^{\frac{1}{2}} \right)|u |_{k+1,\Omega}\|e_h\|_h\,.
  \end{equation} 
  To bound II we first recall that $(\pi(u))^{-}=\pi(u)-(\pi(u))^{+}$ and then
  \begin{align*}
    {\rm II}
    =
    a_{J}(u-u_{h}^{+},e_{h})
    =
    a_{J}(u-u_{h}^{+},(\pi(u))^{+}-u_{h}^{+})+a_{J}(u-u_{h}^{+},(\pi(u))^{-}).
  \end{align*}
  Now, thanks to the regularity of $u$, we note that
  $J(u,(\pi(u))^{+}-u_{h}^{+})=0$. Hence, using that $u_h^{+}$ solves
  \eqref{variational}, we have
  \begin{align*}
    a_{J}(u-u_{h}^{+},(\pi(u))^{+}-u_{h}^{+})&=a_{J}(u,(\pi(u))^{+}-u_{h}^{+})-a_{J}(u_{h}^{+},(\pi(u))^{+}-u_{h}^{+})\\
    &=\langle f,(\pi(u))^{+}-u_{h}^{+}\rangle_{\Omega}-a_{J}(u_{h}^{+},(\pi(u))^{+}-u_{h}^{+})\leq 0.
  \end{align*}
  Therefore, 
  \begin{align*}
    {\rm II}=a_{J}(u-u_{h}^{+},e_{h})\leq a_{J}(u-u_{h}^{+},(\pi(u))^{-}).
  \end{align*}
  The term in the right-hand side of the above inequality is, in
  essence, a consistency error, and will require special treatment.
  Let $i_{h} $ be the Lagrange interpolant defined in
  \eqref{lagrange}. Since $u(x)\in [0,\kappa]$ a.e. in $\Omega$, then
  $i_{h}(u)\in V_{\mathcal{P}}^{+}$. So, $i_{h}(u)=(i_{h}(u))^{+}\in
  V_{\mathcal{P}}^{+}$, which implies $(i_{h}(u))^{-}=0$. Therefore,
  \begin{align*}		 						
    a_{J}(u-u^{+}_{h},(\pi(u))^{-})=		a_{J}(u-u^{+}_{h},(\pi(u))^{-}-(i_{h}(u))^{-}).
  \end{align*}	 								
  So, using the definition of $a_J(\cdot,\cdot)$ we bound II as follows	
  \begin{equation}
    \begin{split}
      {\rm II}
      &\leq
      a_{J}(u-{u_h^{+}}  ,(\pi(u))^{-}-(i_{h}(u))^{-})
      \\
      &= (\mathcal{D}\nabla(u-u_{h}^{+}),\nabla ((\pi(u))^{-}-(i_{h}(u))^{-})_{\Omega} 
      +(\bbeta\cdot \nabla (u-u_{h}^{+}),(\pi(u))^{-}-(i_{h}(u))^{-})_{\Omega}
      \\
      &\quad +\mu (u-u_{h}^{+},(\pi(u))^{-}-(i_{h}(u))^{-})_{\Omega}
      +J(u-u_{h}^{+},(\pi(u))^{-}-(i_{h}(u))^{-})
      \\ 
      &={\rm (e)+(f)+(g)+(h)}.
    \end{split}
    \label{II45}
  \end{equation}
  We begin with (e), which can be bounded by Cauchy-Schwarz
  inequality
  \begin{equation}
    \label{eq:e-bound}
    \begin{split}
      {\rm (e)}
      &\leq
      \|\mathcal{D} \|_{0,\infty,\Omega}^{\frac{1}{2}}
      \|\mathcal{D}^{\frac{1}{2}}\nabla(u-u_{h}^{+}) \|_{0,\Omega}|(\pi(u))^{-}-(i_{h}(u))^{-} |_{1,\Omega}
      \\
      &\leq Ch^{-1}\|\mathcal{D} \|_{0,\infty,\Omega}^{\frac{1}{2}}
      \|\mathcal{D} ^{\frac{1}{2}}\nabla(u-u_{h}^{+}) \|_{0,\Omega}\|(\pi(u))^{-}-(i_{h}(u))^{-} \|_{0,\Omega},
    \end{split}
  \end{equation}
  using an inverse inequality. Then, as $(\cdot)^-$ is Lipschitz
  continuous by Lemma~\ref{lem32}
  \begin{equation*}
    \begin{split}
      {\rm (e)}
      &\leq
      Ch^{-1}\|\mathcal{D}
      \|_{0,\infty,\Omega}^{\frac{1}{2}}\|\mathcal{D}^{\frac{1}{2}}\nabla(u-u_{h}^{+}) \|_{0,\Omega}\|\pi(u)-i_{h}(u)\|_{0,\Omega}
      \\
      &\leq
      Ch^{k}\|\mathcal{D} \|_{0,\infty,\Omega}^{\frac{1}{2}}|u |_{k+1,\Omega}	\|u-u_h^+\|_h^{}\,,
    \end{split}
  \end{equation*}
  by the approximation properties for $i_h$ and $\pi$ given in
  \eqref{lagranges} and \eqref{proj1}. For (f) we begin by integrating
  by parts
  \begin{equation*}
    \begin{split}
      {\rm (f)}
      &=
      -\big(  u-u_{h}^{+},\bbeta\cdot\nabla((\pi(u))^{-}-(i_{h}(u))^{-})\big)_\Omega^{}
      \\
      &\leq \|\bbeta\|_{0,\infty,\Omega}\| u-u_{h}^{+} \|_{0,\Omega} |(\pi(u))^{-}-(i_{h}(u))^{-} |_{1,\Omega}^{}\,,
    \end{split}
  \end{equation*}
  using the Cauchy-Schwarz. inequality Then again, by an inverse estimate
  \begin{equation}
    \label{eq:f-bound}
    \begin{split}
      {\rm (f)}
      & \leq Ch^{-1}\|\bbeta\|_{0,\infty,\Omega} \| u-u_{h}^{+} \|_{0,\Omega} \|(\pi(u))^{-}-(i_{h}(u))^{-} \|_{0,\Omega}
      \\
      &\leq Ch^{k}\|\bbeta\|_{0,\infty,\Omega}\,\mu^{-\frac{1}{2}} |u |_{k+1,\Omega}\| u-u_{h}^{+}\|_h^{}\,,
    \end{split}
  \end{equation}
  again using Lipschitz continuiuty of $(\cdot)^-$ and the
  approximation properties for $i_h$ and $\pi$. Now (g) is controlled
  in much the same way using the Cauchy-Schwarz inequality
  \begin{equation}
    \label{eq:g-bound}
    \begin{split}
      {\rm (g)}
      &\leq
      \mu^{\frac{1}{2}}\|\mu^{\frac{1}{2}}(u-u_{h}^{+}) \|_{0,\Omega}\|(\pi(u))^{-}-(i_{h}(u))^{-} \|_{0,\Omega}
      \\
      &\leq Ch^{k+1}\mu^{\frac{1}{2}}|u |_{k+1,\Omega}
			\|u-u_{h}^{+} \|_h^{}\,,
    \end{split}
  \end{equation}
  and by Lipschitz continuiuty of $(\cdot)^-$ and the approximation
  properties for $i_h$ and $\pi$. Finally for (h) Cauchy-Schwarz implies
  \begin{equation*}
    \begin{split}
      {\rm (h)}
      &\leq
      J(u-u_{h}^{+},u-u_{h}^{+})^{\frac{1}{2}}J((\pi(u))^{-}-(i_{h}(u))^{-},(\pi(u))^{-}-(i_{h}(u))^{-})^{\frac{1}{2}}
      \\
      &\leq
      C\|\bbeta\|_{0,\infty,\Omega}^{\frac{1}{2}}\,h^{\frac{1}{2}} |(\pi(u))^{-}-(i_{h}(u))^{-} |^{}_{1,\Omega}\,
      J(u-u_{h}^{+},u-u_{h}^{+})^{\frac{1}{2}},
    \end{split}
  \end{equation*}
  using Lemma \ref{lem33}. Now by an inverse inequality
  \begin{equation}
    \label{eq:h-bound}
    \begin{split}
      {\rm (h)}
      &\leq
      C\|\bbeta\|_{0,\infty,\Omega}^{\frac{1}{2}}\, h^{-\frac{1}{2}} \|(\pi(u))^{-}-(i_{h}(u))^{-} \|^{}_{0,\Omega}\,
      J(u-u_{h}^{+},u-u_{h}^{+})^{\frac{1}{2}}
      \\
      &\leq
      Ch^{k+\frac{1}{2}}\|\bbeta\|_{0,\infty,\Omega}^{\frac{1}{2}}|u |_{k+1,\Omega}\|u-u_h^+\|_h^{},
    \end{split}
  \end{equation}
  through the approximability of $i_h$ and $\pi$ and by Lipschitz
  continuiuty of $(\cdot)^-$.
  
  Combining (\ref{eq:e-bound}), (\ref{eq:f-bound}), (\ref{eq:g-bound})
  and (\ref{eq:h-bound}) we arrive to the following bound for II:
			 \begin{equation}		 						
			 {\rm II} \leq Ch^{k}\left(\|\mathcal{D} \|_{0,\infty,\Omega}^{\frac{1}{2}}+\mu^{-\frac{1}{2}}\|\bbeta\|_{0,\infty,\Omega}+h^{\frac{1}{2}}\|\bbeta\|_{0,\infty,\Omega}^{\frac{1}{2}}+h \mu^{\frac{1}{2}}\right)\|u-u_{h}^{+} \|_{h}|u |_{k+1,\Omega}.\label{II2}
			 \end{equation}

			Hence, inserting \eqref{I1}  and \eqref{II2} into \eqref{eh=I+II}, and using Young's inequality, we obtain the following bound for $\|e_{h} \|_{h}$:
			\begin{equation*}
			\|e_{h} \|_{h}^{2}
			\leq Ch^{2k}\big( \|\mathcal{D} \|_{0,\infty,\Omega}^{\frac{1}{2}} + h^{\frac{1}{2}}\|\bbeta\|_{0,\infty,\Omega}^{\frac{1}{2}}+
			\mu^{-\frac{1}{2}}\|\bbeta\|_{0,\infty,\Omega}+
			h\mu^{\frac{1}{2}}\big)^2\,|u|_{k+1,\Omega}^2 +\frac{1}{2}\|e_{h}\|_{h}^{2}+\frac{1}{8}\|u-u_{h}^{+}\|_{h}^{2}.
			\end{equation*} 
	Finally, collecting the bounds that have been obtained for $\|e_{h} \|_{h}$	and $	\|\eta_{h} \|_{h}$ yields		
				\begin{align*}
		\|u-u_{h}^{+}\|_{h}	&\leq \|e_{h} \|_{h}+\|\eta_{h} \|_{h}\\
		&\leq {Ch^k}\, \left(\|\mathcal{D} \|_{0,\infty,\Omega}^{\frac{1}{2}}+\mu^{-\frac{1}{2}}\|\bbeta\|_{0,\infty,\Omega}+h^{\frac{1}{2}}\|\bbeta\|_{0,\infty,\Omega}^{\frac{1}{2}}+h\mu^{\frac{1}{2}}\right)|u |_{k+1,\Omega}+\frac{1}{2}\|u-u_{h}^{+}\|_{h}\,,
				\end{align*} 
				and \eqref{error} follows rearranging terms.
		\end{proof}	

{
\subsection{The extension to problems with non-homogeneous boundary conditions}
Although the presentation of the method and its analysis have been
done assuming that the Dirichlet boundary conditions are homogeneous,
many problems of practical interest involve non-homogeneous boundary
conditions. So, in this section we briefly describe the method for a
non-homogeneous Dirichlet problem. Let us assume that, instead of
\eqref{CDR} we are interested in solving
\begin{align}
   -{\rm div} (\mathcal{D}\nabla u)+\bbeta\cdot\nabla u+\mu u&= f  \hspace{1cm}{\rm in}\;\Omega, \label{CDR-NH}\\  
   u&= g  \hspace{1cm}{\rm on}\; \partial \Omega\,,\nonumber
\end{align}
where $g\in H^{\frac{1}{2}}(\partial\Omega)$, $g\ge 0$ on
$\partial\Omega$.  Thanks to the definition of $\kappa$ and the
maximum and comparison principles for partial differential equations,
$\kappa\ge \|g\|_{0,\infty,\partial\Omega}^{}$.  For simplicity, we
will assume that $g$ is the trace of a function belonging to
$\tilde{V}_\calP^{}$.  }

{Let us consider the set of nodes of the triangulation
  (including boundary nodes): $\bx_1^{},\ldots,\bx_P^{}$, and, as
  earlier in the manuscript, we denote the interior ones by
  $\bx_1^{},\ldots,\bx_N^{}, N<P$.  Let us define the following
  extension of $g$ into $\Omega$: $u_{h,g}^{}\in \tilde{V}_\calP^{}$
  defined as follows
\begin{equation}\label{extension}
u_{h,g}^{}(\bx_i^{})=\left\{ \begin{array}{cl} g(\bx_i^{}) & \textrm{if}\; i\in\{N+1,\ldots,P\}\,, \\
0 & \textrm{else}\,.\end{array}\right.
\end{equation}
With these ingredients the analogue of \eqref{eq19} for the
non-homogeneous boundary data is: Find $\tilde{u}_h^{}\in V_\calP^{}$
such that
\begin{equation}\label{FEM-NH}
a_J^{}\big((\tilde{u}_h^{}+u_{h,g}^{})^+, v_h^{}\big)+s\big((\tilde{u}_h^{}+u_{h,g}^{})^-, v_h^{}\big) = \langle f,v_h^{}\rangle_\Omega^{}
\qquad\forall\, v_h^{}\in V_\calP^{}\,.
\end{equation}
}

{The fundamental reason for choosing $u_{h,g}^{}$ as
  extension of $g$, rather than any other, is that at each node of
  $\calP$ either $\tilde{u}_h^{}$ or $u_{h,g}^{}$ are zero, and then
  the following important property holds:
\begin{equation}
(\tilde{u}_h^{}+u_{h,g}^{})^+=\tilde{u}_h^{+}+u_{h,g}^{}\,,
\end{equation}
and, as a consequence $(\tilde{u}_h^{}+u_{h,g}^{})^-=\tilde{u}_h^{-}$ (as $u_{h,g}^-=0$ since $\kappa\ge \|g\|_{0,\infty,\partial\Omega}^{}$).  Thus, 
\eqref{FEM-NH} can be rewritten as: Find $\tilde{u}_h^{}\in V_\calP^{}$
such that 
\begin{equation}\label{FEM-NH-big}
a_J^{}\big(\tilde{u}_h^{+}, v_h^{}\big)+s\big(\tilde{u}_h^{-}, v_h^{}\big) = (f,v_h^{})_\Omega^{}
- a_J^{}\big(u_{h,g}^{}, v_h^{}\big)
\qquad\forall\, v_h^{}\in V_\calP^{}\,.
\end{equation}
Now the proof of Theorem~\ref{Theorem14} remains unchanged, and thus
\eqref{FEM-NH-big} has a solution. For the uniqueness result, the same
arguments used in the proof of Lemma~\ref{lemma5} lead to the fact
that $\tilde{u}_h^+$ solves the following variational inequality:
$\tilde{u}_h^+\in V_\calP^+$, and
\begin{equation}\label{Variational-Ineq-bis}
a_J^{}(\tilde{u}_h^+,v_h^{}-\tilde{u}_h^+)\ge \langle f,v_h^{}-\tilde{u}_h^+\rangle_\Omega^{}
- a_J^{}\big(u_{h,g}^{}, v_h^{}-\tilde{u}_h^{+}\big)
\qquad\forall\, v_h^{}\in V_\calP^+\,.
\end{equation}
}

{Since \eqref{Variational-Ineq-bis} has a unique solution thanks to Stampacchia's Theorem,
the existence and uniqueness of solution for \eqref{FEM-NH} follows using exactly the same arguments
as those for  \eqref{eq19}.  Finally, to analyse the error we notice that, assuming enough regularity for the exact solution we get
\begin{equation*}
a_J^{}(u, v_h^{}-\tilde{u}_h^+)=\langle f,v_h^{}-u_h^+\rangle_{\Omega}^{}\,,
\end{equation*}
for all $v_h^{}\in V_\calP^{}$. So,  the following variational inequality holds
\begin{equation}
a_J^{}((\tilde{u}_h^{}+u_{h,g}^{})^+-u, v_h^{}-u_h^+)\ge 0\qquad\forall\, v_h^{}\in V_\calP^+\,,
\end{equation}
which is instrumental in the bound for the bound of \textrm{II} in the proof of Theorem~\ref{Th:error}.
Hence, the error analysis follows very similar arguments as those presented in the proof of Theorem~\ref{Th:error}.
}

\section{Numerical experiments}	
\label{sec:numerics}

In this section we present three series of numerical results testing
the performance of the finite element method \eqref{eq19}.  In all
numerical experiments in this section $\Omega=(0,1)^2$, and we have used
the value $\alpha=1$ in the stabilising bilinear form
$s(\cdot,\cdot)$.  We have selected three different types of meshes, the coarsest level of them are depicted
in Figure~\ref{meshs}.  While the family depicted in Figure~\ref{e11}
and \ref{c11} are symmetric and Delaunay, the mesh depicted in
Figure\ref{d11} is a non-Delaunay one, and the one depicted in \ref{Q11} is
quadrilateral. The mesh represented by \ref{d11} is generated by
displacing some interior nodes of the mesh in \ref{c11} to the right, thus
creating obtuse angles. The reason for this choice is motivated by the
fact that the discrete maximum principle fails to hold for most finite
element methods in such meshes (see, e.g., \cite{BJK23}).  In
particular, the initial datum $u_h^0$ defined below will not, in
general, belong to $V_\calP^+$.
\begin{figure}[h!]
	\centering
	\subfloat[A symmetric, Delaunay mesh. ]{\label{e11}
		\includegraphics[width=0.23\textwidth]{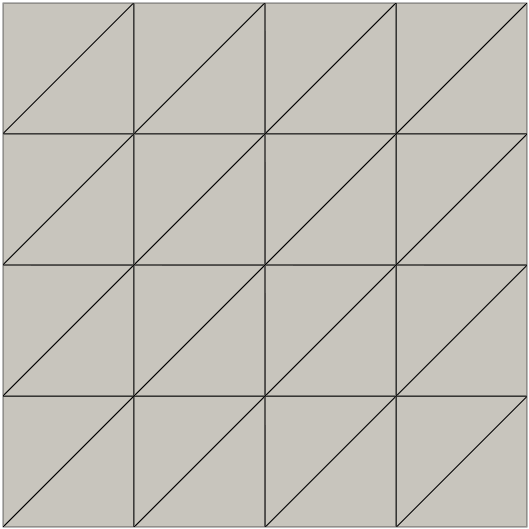}
	}
	\subfloat[A symmetric, Delaunay mesh.]{\label{c11}
		\includegraphics[width=0.23\textwidth]{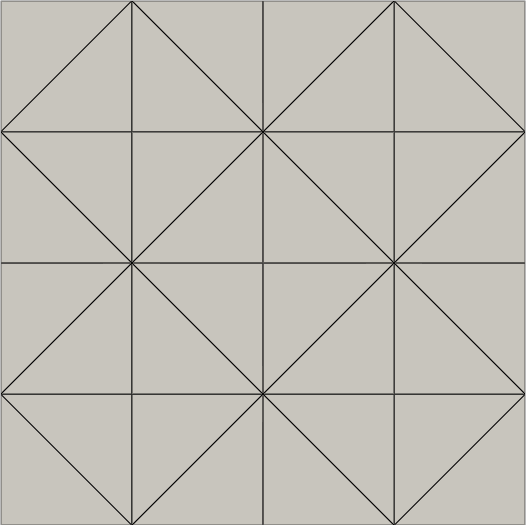}
	}
	\subfloat[A non-symmetric, non-Delaunay mesh.]{\label{d11}
		\includegraphics[width=0.23\textwidth]{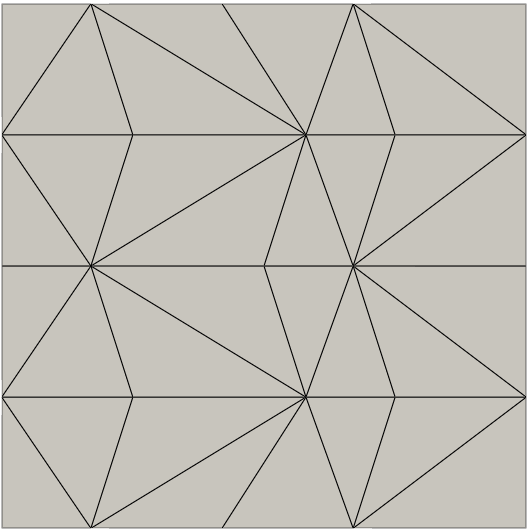}
	}
	\subfloat[A simple quadrilateral mesh.]{\label{Q11}
		\includegraphics[width=0.23\textwidth]{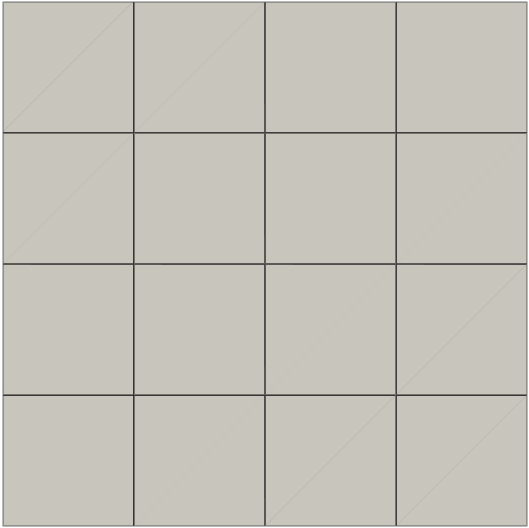}
	}
	\caption{Three coarse level indicative meshes used in the
		experiments all with $N=5$.}\label{meshs}
\end{figure}

To solve the nonlinear system associated to \eqref{FEMethod} we use
the following Richardson-like iterative method: Set $u_{h}^{0}\in
V_{\mathcal{P}}$ such that
\begin{eqnarray}
  a_J(u^{0}_{h},v_{h})= \langle f,v_{h}\rangle_{\Omega}\qquad\forall v_{h}\in V_{\mathcal{P}}\,.\label{iter}
\end{eqnarray}
Then, for $n=0,1,2,\ldots,$ find $u^{n+1}_{h}\in V_{\mathcal{P}}$ such that:
\begin{eqnarray}
  a_{J}(u^{n+1}_{h},v_{h})=a_{J}(u^{n}_{h},v_{h})+\omega \left(\langle f,v_{h}\rangle_{\Omega} - a_{J}((u^{n}_{h})^{+},v_{h})-s((u^{n}_{h})^{-},v_{h})\right)\qquad\forall v_{h}\in V_{\mathcal{P}},\label{iter1}
\end{eqnarray}
where $\omega\in(0,1]$ is a damping parameter. The iterations are
  terminated when
\begin{equation}
  \|u_{h}^{n+1}-u^{n}_{h} \|_{0,\Omega}\leq 10^{-8}. \label{stopping}
\end{equation}

In all figures, $N-1$ represents the number of divisions  in the $x$ and
$y$ directions, so the total number of vertices (including the boundary)
is $N^2$.  We test the performance of the method
asymptotically in $N$, where we use EOC as the estimated order of
convergence, and also examine the convergence of the iterative method.
We have used $\mathbb{P}_1^{},\mathbb{P}_2^{}$, and $\mathbb{P}_3^{}$ elements in the
triangular meshes, and $\mathbb{Q}_1^{}$ and $\mathbb{Q}_2^{}$ elements in the quadrilateral mesh.
		
\begin{example}[Convergence of a problem with smooth solution]\label{Example1}
  We consider $\mu=1$, $\mathcal{D}=\epsilon\begin{bmatrix} 100 &\cos(x) \\ \cos(x) & 1  \end{bmatrix}$ ($\epsilon=10^{-5}$), $\boldsymbol{\beta}=(2,1)$,
  and set $f$ such that the function $u(x,y)=100\sin(\pi x)\sin(\pi y)$
  is the analytical solution of (\ref{CDR}).  Notice that $u(x)\in
  [0,100]$, and thus we choose $\kappa=100$.  The CIP stabilisation
  parameter is $\gamma=0.025$ in the penalty term (\ref{eq10}), and we
  have used $\omega=1$ in the iterative method \eqref{iter1}.
\end{example}

In Tables \ref{Tab11}-\ref{Tab16} we report the convergence results in both the
$\|\cdot\|_{0,\Omega}$ and $\|\cdot\|_h$ norms for $u-u_h^+$, and the
$\|\cdot\|_s$-norm for $u_h^-$, as well as the number of iterations
needed to reach convergence for the nonlinear system. The results show
an optimal order of convergence for the constrained part $u_h^+$, thus
confirming the results from Section~\ref{Sec:Error}. In addition, they
show a higher order of convergence (to zero) for the complementary
part $u_h^-$.
\begin{table}[H]
	\centering
	\begin{tabular}{|r|r|c|c|c|c|c|c|}
		\hline
		$N$ &Itr  &$\|u-u^{+}_{h} \|_{0,\Omega}$& EOC  &$\|u-u^{+}_{h} \|_{h}$  & EOC     & $	\| u^{-}_{h}\|_{s}$ & EOC \\  
		\hline\hline
		5  & 2 & 8.57e+0 &-- & 4.55e+1  &-- & 0  &--\\
		9  & 7 &  2.12e+0& 2.37 &1.95e+1  & 1.44 & 3.05e-1 &--\\ 
		17 &   6&  5.05e-1 & 2.26 & 7.71e+0 & 1.46 & 2.74e-1 &0.17\\ 
		33  & 7 &1.23e-1 & 2.12 & 2.89e+0  & 1.47 &      6.64e-2 & 2.13\\  
		65 &  7 & 3.09e-2 & 2.03 &  1.06e+0 & 1.47&1.27e-2 & 2.44\\ 
		129 & 6 & 7.80e-3  & 2.00 &  3.82e-1 & 1.49 &2.29e-3 & 2.50\\ 
		\hline
	\end{tabular}
	\caption{Numerical results for Example~\ref{Example1} using $\mathbb{P}_{1}$ elements and 
		Mesh \ref{d11}.\label{Tab11}}
\end{table}
\begin{table}[H]
	\centering
	\begin{tabular}{|r|r|c|c|c|c|c|c|}
		\hline
		$N$ &Itr  &$\|u-u^{+}_{h} \|_{0,\Omega}$& EOC  &$\|u-u^{+}_{h} \|_{h}$  & EOC     & $	\| u^{-}_{h}\|_{s}$ & EOC \\  
		\hline\hline
		5  & 15 & 5.51e+0 &-- & 2.73e+1 &-- & 4.43e+0  &--\\
		9  & 15 &  8.03e-1 & 3.27 & 9.79e+0  & 1.74 & 8.43e-1  &2.82\\ 
		17 &  13 &  1.38e-1 &2.76 &  3.47e+0 & 1.63 & 1.67e-1&2.54\\ 
		33  & 12 &  2.86e-2 &2.37 & 1.23e+0  & 1.56 &   3.12e-2 & 2.52\\  
		65 &  10 & 6.62e-3 & 2.15 & 4.37e-1 & 1.53 &5.70e-3 & 2.50\\ 
		129 &  9 & 1.61e-3  & 2.06 &  1.56e-1 & 1.50 &1.02e-3& 2.51\\ 
		\hline
	\end{tabular}
	\caption{Numerical results for Example~\ref{Example1} using $\mathbb{Q}_{1}$ elements and Mesh \ref{Q11}.\label{Tab13}}
\end{table}
\begin{table}[H]
	\centering
	\begin{tabular}{|r|r|c|c|c|c|c|c|}
		\hline
		$N$ &Itr  &$\|u-u^{+}_{h} \|_{0,\Omega}$& EOC  &$\|u-u^{+}_{h} \|_{h}$  & EOC     & $	\| u^{-}_{h}\|_{s}$ & EOC \\  
		\hline\hline
		5  & 15 & 2.51e+0 &-- & 8.14e+0 &-- & 1.37e+0  &--\\
		9  & 2 &  3.02e-1& 3.60 & 1.52e+0  & 2.85 & 0 &--\\ 
		17 &   2&  3.72e-2 & 3.29 & 3.03e-1 & 2.53 & 0 &--\\ 
		33  & 2 & 4.44e-3 & 3.20 & 5.78e-2  & 2.50 &      0 & --\\  
		65 &  2 & 5.37e-4 & 3.11 &  1.07e-2 & 2.48 &0 &--\\ 
		129 &  2 & 6.57e-5  & 3.03 &  1.97e-3 & 2.44 &0 & --\\ 
		\hline
	\end{tabular}
	\caption{Numerical results for Example~\ref{Example1} using $\mathbb{P}_{2}$ elements and  Mesh \ref{d11}.\label{Tab14}}
\end{table}
\begin{table}[H]
	\centering
	\begin{tabular}{|r|r|c|c|c|c|c|c|}
		\hline
		$N$ &Itr  &$\|u-u^{+}_{h} \|_{0,\Omega}$& EOC  &$\|u-u^{+}_{h} \|_{h}$  & EOC     & $	\| u^{-}_{h}\|_{s}$ & EOC \\  
		\hline\hline
		5  & 2 & 3.77e-1 &-- & 6.22e-1 &-- & 0  &--\\
		9  & 58 &  4.26e-2 & 3.70 & 9.79e-2  &3.14 & 1.85e-2  &4.06\\ 
		17 &  44 &  5.18e-3& 3.31 &  1.71e-2& 2.74 & 2.44e-3 &3.18\\ 
		33  & 28 &6.36e-4 & 3.16 &3.21e-3  & 2.52 &      2.45e-4 & 3.46\\  
		65 &  2 & 7.75e-5 &3.10 & 6.43e-4 & 2.37&5.28e-6 & 5.66\\ 
		129 &  2 & 9.20e-6  & 3.10 &  1.37e-4 & 2.25 &4.35e-7 & 3.62\\
		\hline
	\end{tabular}
	\caption{Numerical results for Example~\ref{Example1} using $\mathbb{Q}_{2}$ elements and  Mesh \ref{Q11}.\label{Tab15}}
\end{table}
\begin{table}[H]
	\centering
	\begin{tabular}{|r|r|c|c|c|c|c|c|}
		\hline
		$N$ &Itr  &$\|u-u^{+}_{h} \|_{0,\Omega}$& EOC  &$\|u-u^{+}_{h} \|_{h}$  & EOC     & $	\| u^{-}_{h}\|_{s}$ & EOC \\  
		\hline\hline
		5  & 2 & 2.85e-1 &-- & 8.75e-1  &-- & 0  &--\\
		9  &  137 &  2.69e-2& 4.01 &  9.75e-2 & 3.73 & 6.62e-3 &--\\ 
		17 &   71 &  2.40e-3 &4.16 &  1.18e-2 &3.32& 3.75e-4 &4.51\\ 
		33  & 2 & 1.70e-4 &3.99 &1.25e-3  & 3.38 &      8.77e-7 & 9.13\\  
		65  & 2 & 9.66e-6& 4.23 &1.17e-4 & 3.49 &  0 & --\\ 
		129 &  2 & 5.03e-7 & 4.26 &  1.02e-5 &3.51&   0 & --\\ 
		\hline
	\end{tabular}
	\caption{Numerical results for Example~\ref{Example1} using $\mathbb{P}_{3}$ elements and  Mesh \ref{d11}.\label{Tab16}}
\end{table}

\begin{example}[A problem with two inner layers]\label{Example2}
  For this test case and the following one 
the diffusion in \eqref{CDR} is given by
$\mathcal{D}=\epsilon\mathcal{I}$, where $\epsilon>0$.
  We now approximate the solution of \eqref{CDR} for
  $f=0$, $\mu=0$, $\epsilon=10^{-5}$, and
  $\boldsymbol{\beta}=(-y,x)$. Homogeneous Neumann boundary
  conditions are imposed on exit (that is, at the lines $x=0$ and $y=1$), while the following
  (discontinuous) Dirichlet datum is imposed on entry ($x=1$ and $y=0$):
  \begin{equation}
    g(x,y)
    =
    \left\{
    \begin{split}
      &0 \quad \text{ if } x\leq \tfrac{1}{3} \text{ and } y=0
      \\
      &\tfrac{1}{2}  \quad \text{ if } x\in \left(\tfrac{1}{3}, \tfrac{2}{3}\right) \text{ and } y=0,
      \\
      &1  \quad \text{ otherwise.}
    \end{split}
    \right. 
  \end{equation}
\end{example}
The goal of this numerical experiment is twofold. First, we aim at
testing the capabilities of the current bound preserving method (BPM)
\eqref{eq19} to suppress over- and under-shoots in regions where the
contraint is not needed.  In fact, for this example $\kappa=1$ in the
whole domain, while there are two inner layers inside the domain where
the solution varies rapidly from $0$ to approximately $0.5$, and a
second one where it does from $0.5$ to $1$. Now, around those layers
the BPM will control the undershoot at $u_h^+=0$, but there is no
explicit control on the region where the solution is approximately
$0.5$. Our intent is to assess the capability of the current method to
suppress the possible overshoot at that region, even if the nonlinear
stabilisation is not active in it.  The second goal of this numerical
experiment is to provide numerical evidence that the addition of the
(linear) CIP stabilisation on $u_h^+$ has a positive impact on the
performance of the method.
				
We start addressing the latter of our objectives.  In
Tables~\ref{tabEx2-1} and \ref{tabEx2-2} we report the number of iterations required by
the nonlinear solver. In all our simulations we have set a maximum
number of iterations to $3,000$, and whenever we have reached this
value, the solver stops and we report ``NC'' which represents
non-convergence.  As can be seen in Tables~\ref{tabEx2-1} and \ref{tabEx2-2}, if the
linear CIP stabilisation is not added to the formulation then the
nonlinear iteration process is more prone to non-convergence, thus
giving one more argument in favour of adding linear stabilisation to
\eqref{eq19}. For this particular example we have chosen to use the
stabilising term \eqref{eq121} with stabilising parameter
$\gamma_{\bbeta} = 0.05$, as it is the one that has provided the best
numerical results in terms of sharpness of the interior layers.

We have run the experiments using meshes from Figure \ref{e11} and
Figure \ref{d11} for a variety of values for $N$.  For these
experiments, for $\mathbb{P}_{1}$, $\mathbb{Q}_{1}$ and $\mathbb{Q}_{2}$ we have used $\omega=0.1$ in \eqref{iter1} for the BPM
and $\omega=0.05$ if CIP is removed, that is, choosing
$\gamma_\bbeta=0$ (it  is worth mentioning that making $\omega$ smaller makes the
convergence of the iterative solver more likely).
\begin{table}[H]
	\centering
	\begin{tabular}{|c|l|c|c|c|c|c|c|}
		\hline
		& $N$	&	5  & 9 & 17 & 33 & 65 & 129 \\\hline
		\multirow{ 2}{*}{\textrm{Mesh \ref{e11}}}& $\gamma_\beta=0.05$	&	82  & 96 & 122  & 124& 113 & 98 \\ 
		& $\gamma_\beta=0$		&	228  & 1702 & NC & NC & NC & NC \\  \hline
		\multirow{ 2}{*}{\textrm{Mesh \ref{d11}}}&	$\gamma_\beta=0.05$	&	140 &148 & 174  & 137 & 123 &  111 \\ 
		& $\gamma_\beta=0$	         & 126 & NC & NC  & NC & NC & NC  \\ \hline
	\end{tabular}
	\caption{Number of iterations for the fixed point linearisation
		(\ref{iter1}) needed to reach convergence using $\mathbb{P}_{1}$ elements and the meshes given in
		Figures \ref{e11} and \ref{d11}.}\label{tabEx2-1}
\end{table}		

\begin{table}[H]
	\centering
	\begin{tabular}{|c|l|c|c|c|c|c|c|}
		\hline
		& $N$	&	5  & 9 & 17 & 33 & 65 & 129 \\\hline
		\multirow{ 2}{*}{$\mathbb{Q}_{1}$}& $\gamma_\beta=0.05$	&	72  & 128 & 136  & 151& 159 & 190 \\ 
		& $\gamma_\beta=0$		&	1901  & NC & NC & NC & NC & NC \\  \hline
		\multirow{ 2}{*}{$\mathbb{Q}_{2}$}&	$\gamma_\beta=0.05$	&	283 & 243 & 360 & 315& 339 & 258 \\ 
		& $\gamma_\beta=0$	         &	2034 & NC & NC  & NC & NC & NC  \\ \hline
	\end{tabular}
	\caption{Number of iterations for the fixed point linearisation
		(\ref{iter1}) needed to reach convergence using $\mathbb{Q}_{1}$ and $\mathbb{Q}_{2}$ elements and the mesh given in
		Figure \ref{Q11}.}\label{tabEx2-2}
\end{table}		

We next address the sharpness in the approximation of the interior
layers. In Figure \ref{ex6fig6}-\ref{ex6fig6Q1} we depict the approximate solution
for the meshes from Figure \ref{e11} and \ref{d11} and observe the
lack of significant oscillations in the vicinity of the layers, even
for mesh \ref{d11}, which is non-Delaunay.

For comparison purposes, we have also approximated the same problem
using the (linear) CIP method (with the stabilising term \eqref{eq121}
and $\gamma_{\bbeta}=0.05$), and the Algebraic Flux Correction (AFC)
scheme, as written in \cite{BBK17-NumMath} (the latter only for the
case of $\mathbb{P}_1^{}$ elements).  For this last method it is known
that it respects the discrete maximum principle (at least in Delaunay
meshes in two space dimensions), and thus the results are expected to
lie within the bounds, at least for the mesh from Figure~\ref{e11}. In
our experiments we have used the values $p=8$ and $\gamma_0=0.75$,
(see \cite{BBK17-NumMath} for details on the formulation of the
method).  We have carried out the comparison by taking a cross-section
along the line $y=x$. We focus our attention in two main points,
namely, suppression of over and undershoots in the numerical solution,
and in how diffused the interior layers are.  We depict zooms of the
cross-sections at the onset of the layers. We observe that, as
expected, the CIP method by itself presents over- and under-shoots,
while the BPM and AFC method do not. In fact, the function $u_h^+$
presents significantly smaller oscillations than CIP, while showing an
accuracy comparable to that of the AFC method regarding the sharpness
of the layers.

The same comparison has been carried out using the non-Delaunay mesh
from Figure \ref{d11}, and very similar conclusions are drawn.  
Interestingly,  when employing the BPM method for $\mathbb{P}_{2}^{}$, $\mathbb{Q}_{2}^{}$ and higher-order elements, even if the bounds are only imposed
at the nodes, no noticeable undershoots (in terms of violations of the physical bounds) have been observed in the numerical solution.

  \begin{figure}[H]
  	\centering
  	\subfloat[Approximation using mesh \ref{e11}]{
  		\includegraphics[width=0.4\textwidth]{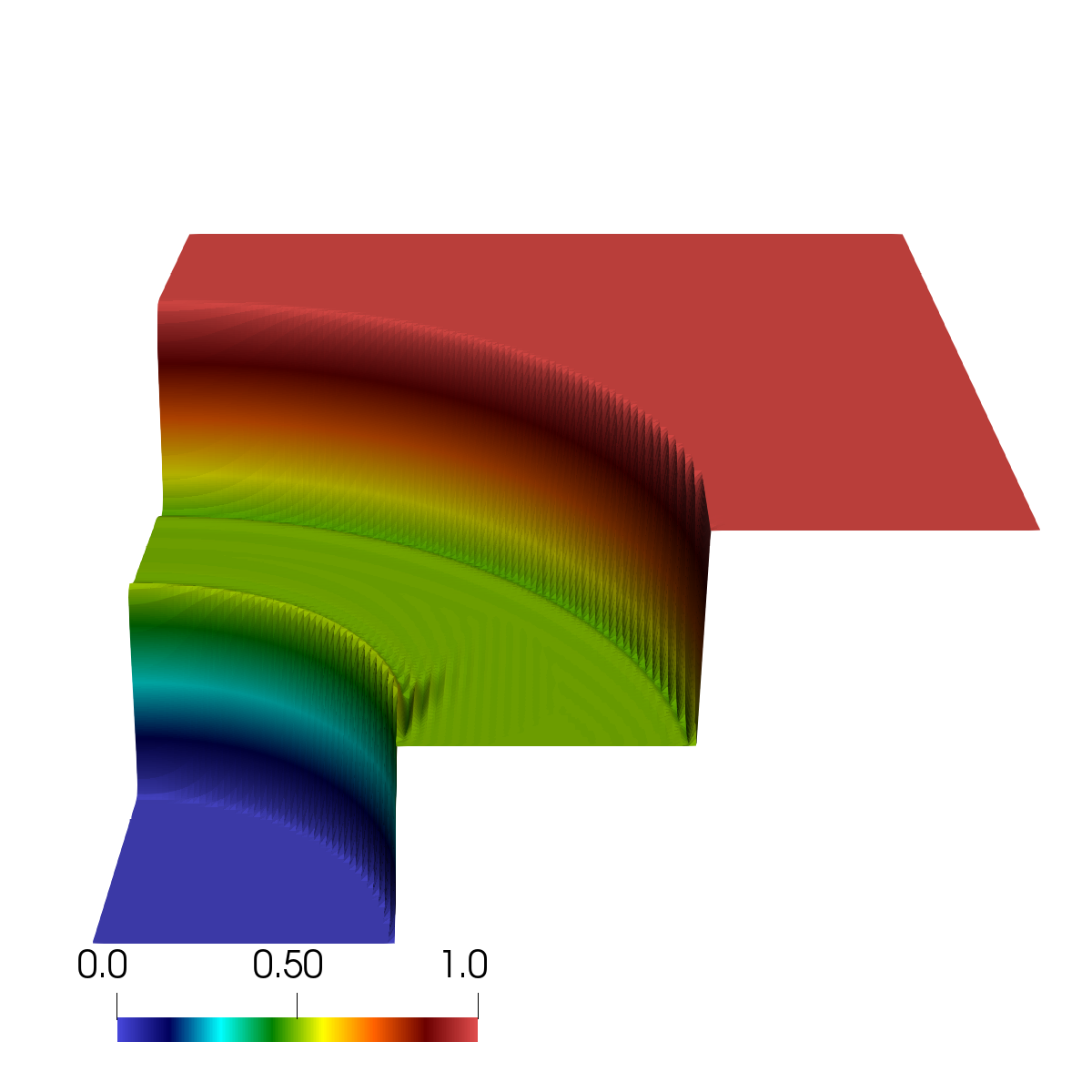}
  	}
  	\subfloat[Approximation using mesh \ref{d11}]{
  		\includegraphics[width=0.4\textwidth]{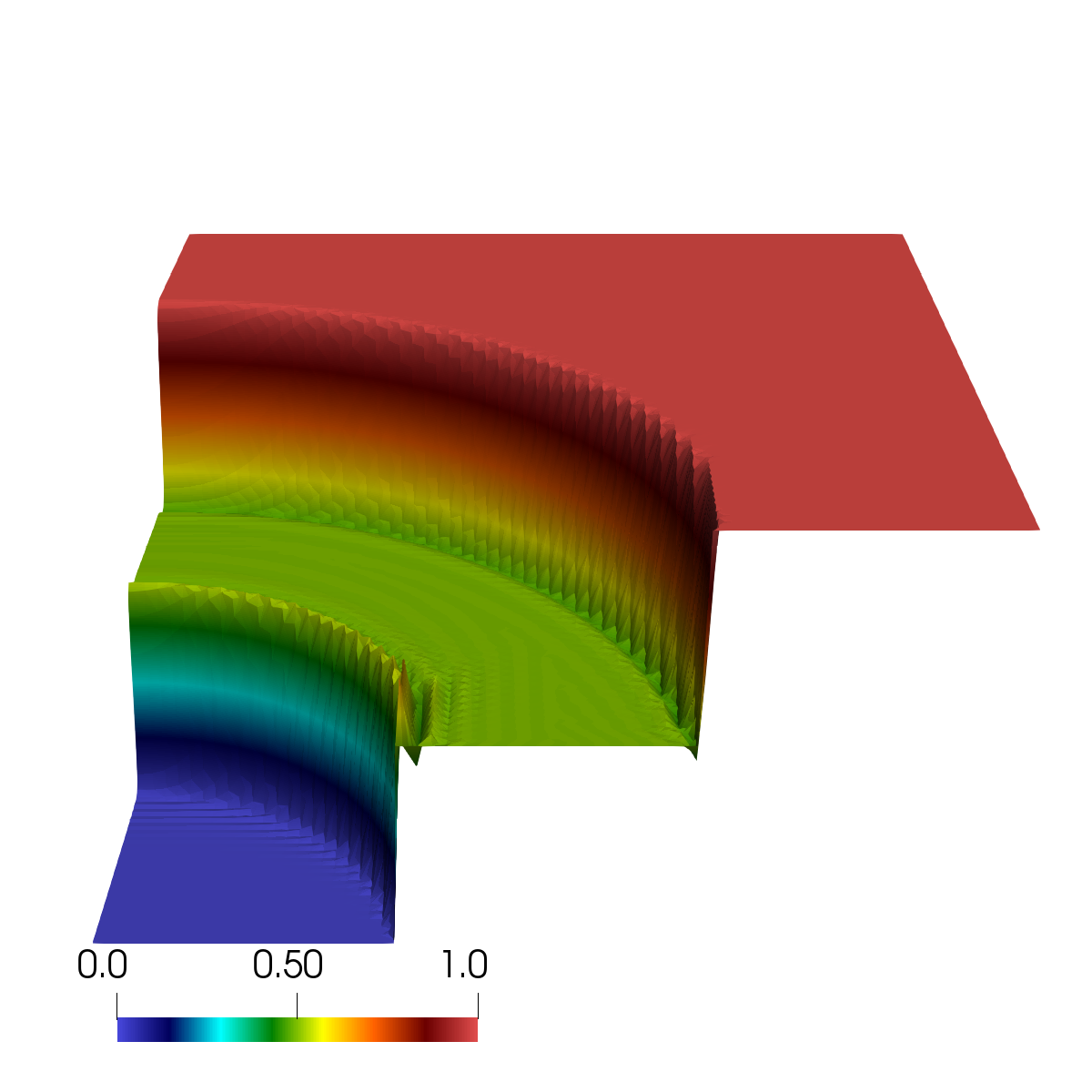}
  	}
  	\vspace{-10pt}
  	\\
  	\subfloat[Cross-section using mesh \ref{e11}]{
  		\includegraphics[width=0.4\textwidth]{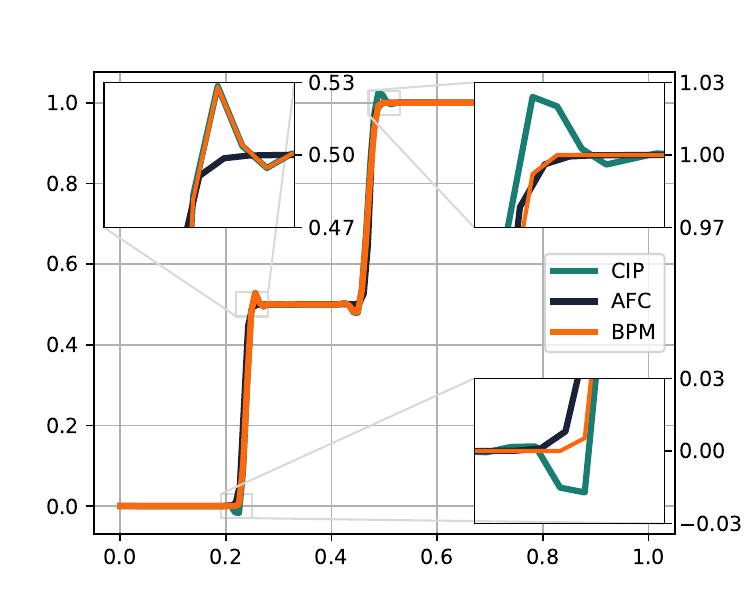}
  	}
  	\subfloat[Cross-section using mesh \ref{d11}]{
  		\includegraphics[width=0.4\textwidth]{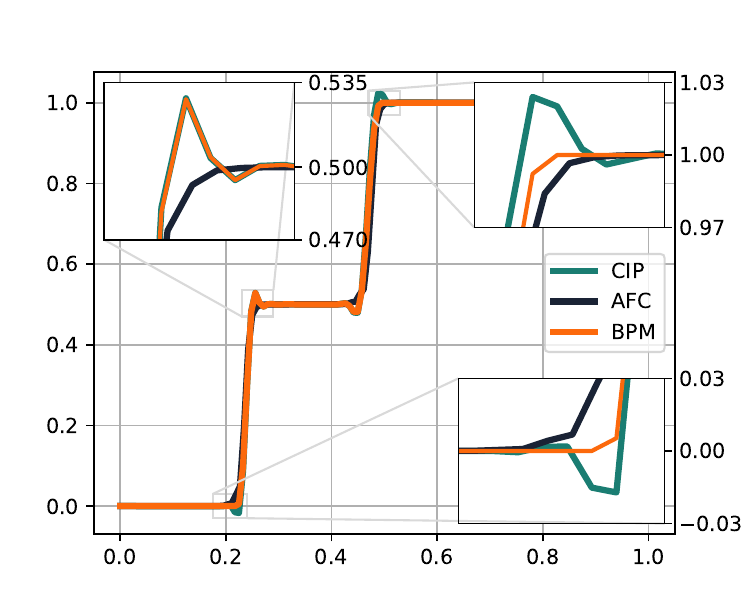}
  	}
  	\caption{
          The approximation of the solution of Example
  		\ref{Example2} by the bound preserving method (BPM), using $\mathbb{P}_{1}$ elements and the
  		meshes given in Figures \ref{e11} and \ref{d11} with
  		$N=129$. Cross-sections taken about $y=x$ plane of the solution
  		of the BPM, CIP and AFC. For AFC $p=8$ and for BPM and CIP the
  		penalty (\ref{eq121}) $\gamma_{\bbeta}=0.05$ and $\omega=0.1$ has been used. 
  		\label{ex6fig6}
  	}
  \end{figure}
 
  \begin{figure}[H]
  	\centering
  	\subfloat[Approximation using $\mathbb{P}_{2}$ elements and mesh \ref{d11}]{
  		\includegraphics[width=0.37\textwidth]{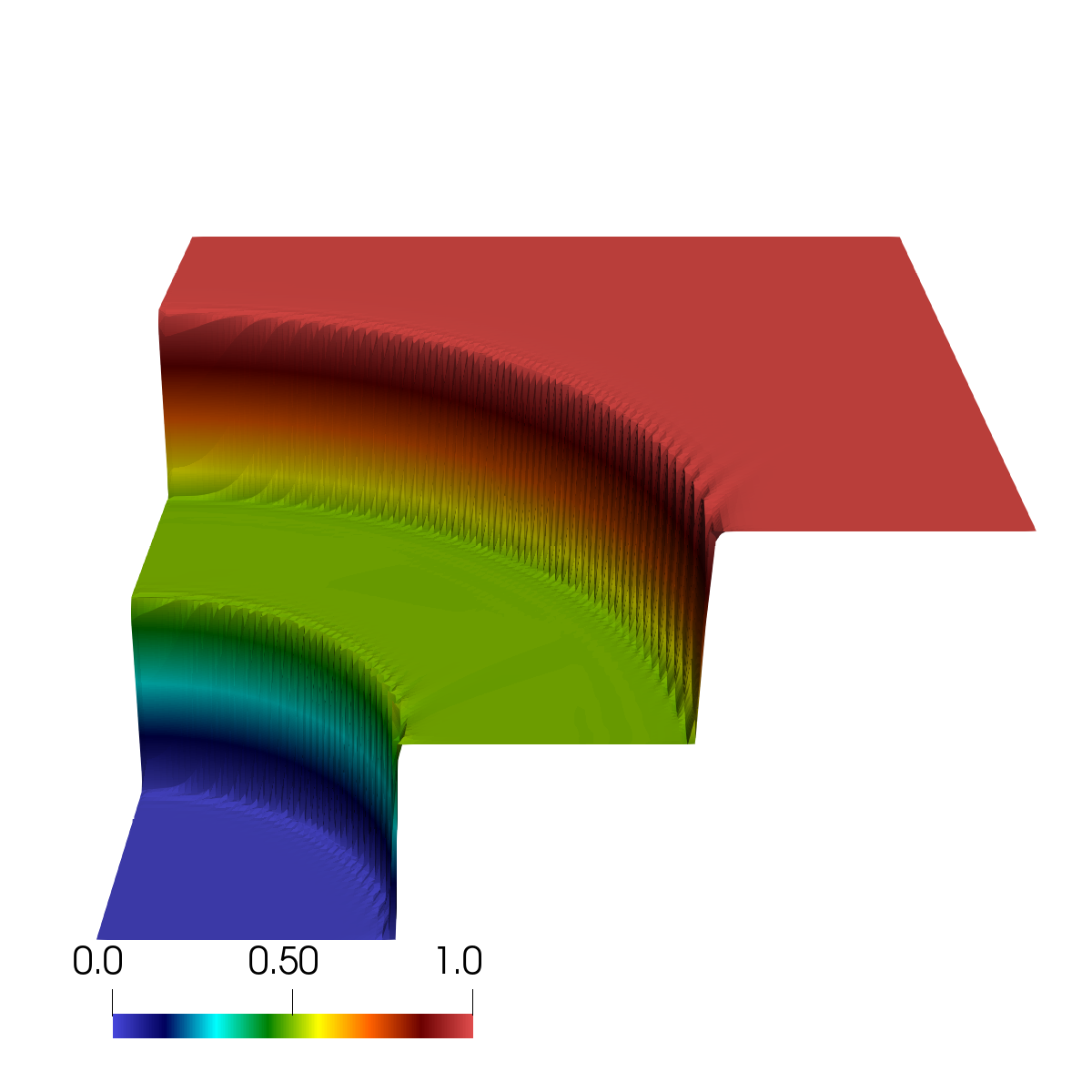}
  	}
  	\subfloat[Approximation using $\mathbb{P}_{3}$ elements and mesh \ref{d11}]{
  		\includegraphics[width=0.37\textwidth]{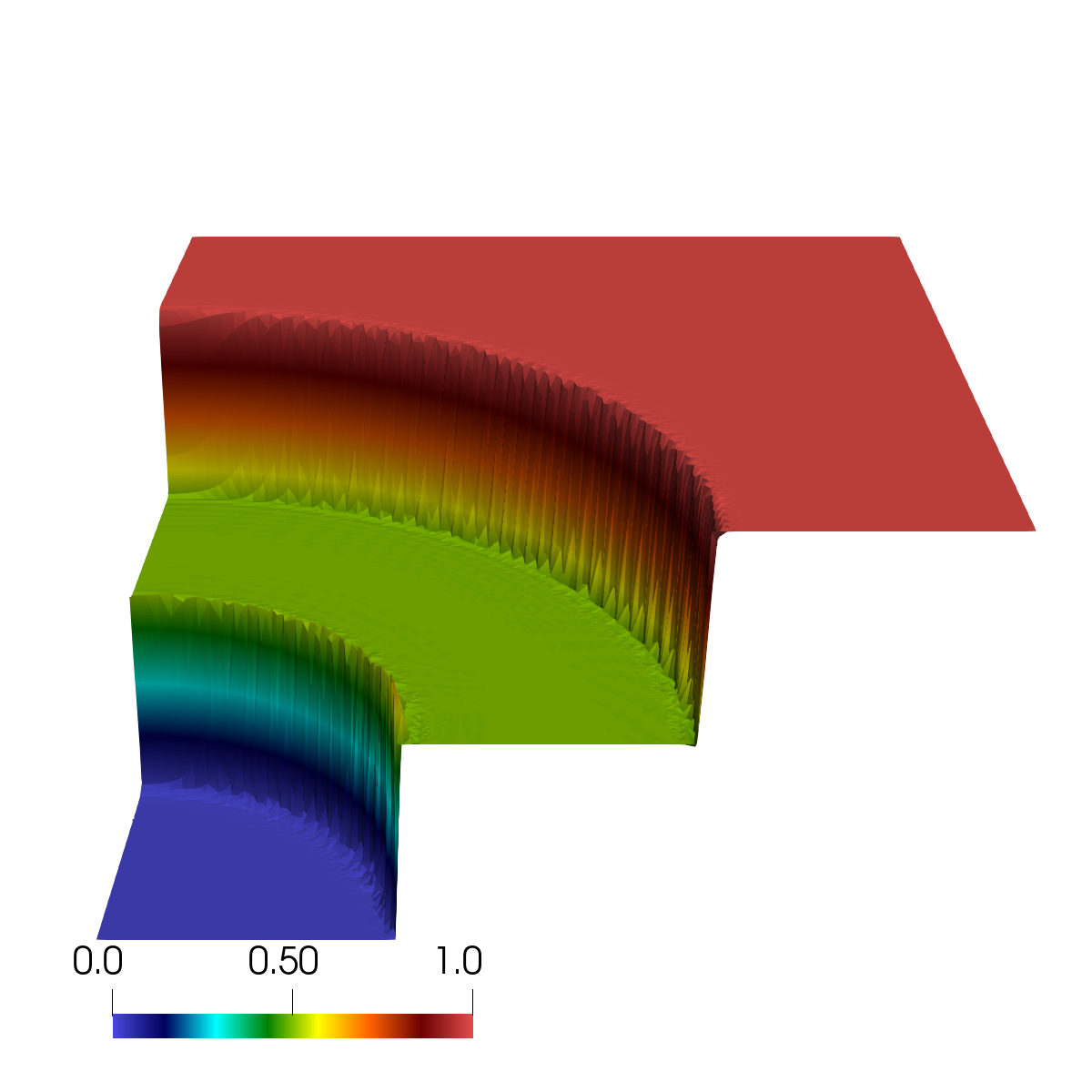}
  	}
  	\vspace{-10pt}
  	\\
  	\subfloat[Cross-section using $\mathbb{P}_{2}$ elements and mesh \ref{d11}]{
  		\includegraphics[width=0.4\textwidth]{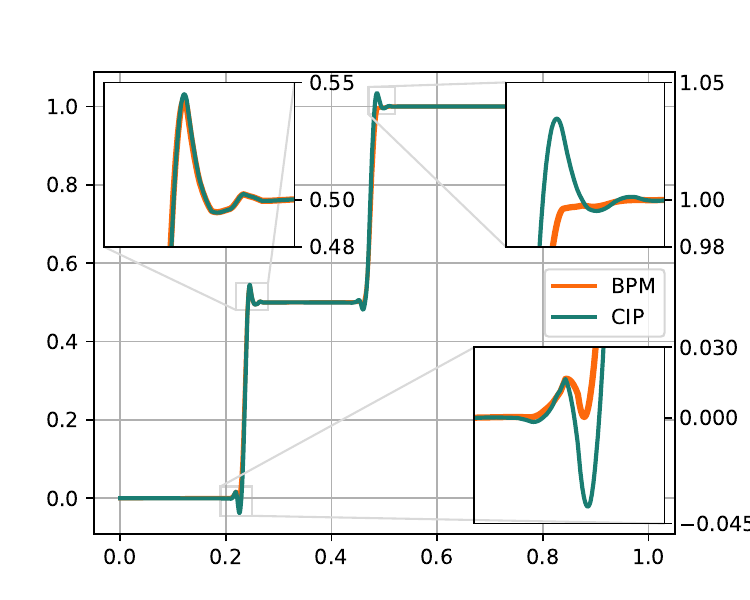}
  	}
  	\subfloat[Cross-section using $\mathbb{P}_{3}$ elements and mesh \ref{d11}]{
  		\includegraphics[width=0.4\textwidth]{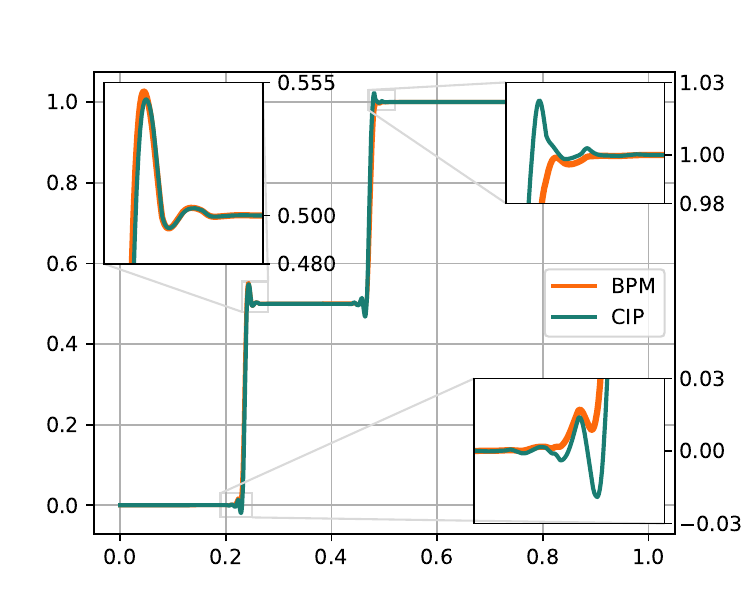}
  	}
  	\caption{
          The approximation of the solution of Example  \ref{Example2} by the bound preserving
          method (BPM), using $\mathbb{P}_{2}$ and $\mathbb{P}_{3}$ elements and the meshes given in Figure
 \ref{d11} with $N=129$. Cross-sections taken along the line $y=x$. For both methods
          the penalty \eqref{eq121} with $\gamma_{\bbeta}=0.05$ was used
          ($\omega=0.05$). For plotting these cross-sections, 10,000 equidistant points were chosen along the line $y=x$, and the values of the approximated solution have been plotted at these points.
  		\label{ex6fig6p2}
  	}
  \end{figure}

  \begin{figure}[H]
  	\centering
  	\subfloat[Approximation using $\mathbb{Q}_{1}$ elements.]{
  		\includegraphics[width=0.37\textwidth]{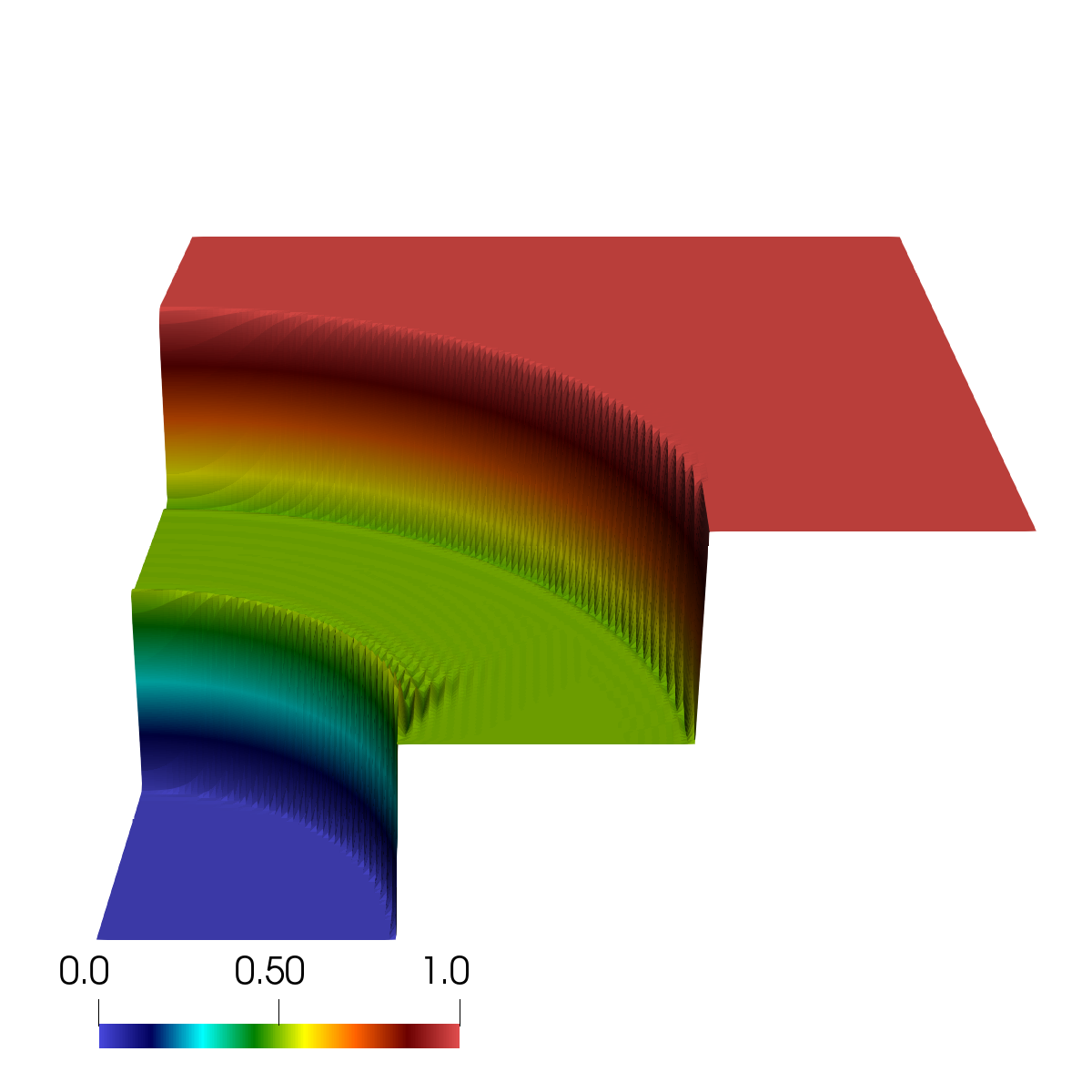}
  	}
  	\subfloat[Approximation using $\mathbb{Q}_{2}$ elements.]{
  		\includegraphics[width=0.37\textwidth]{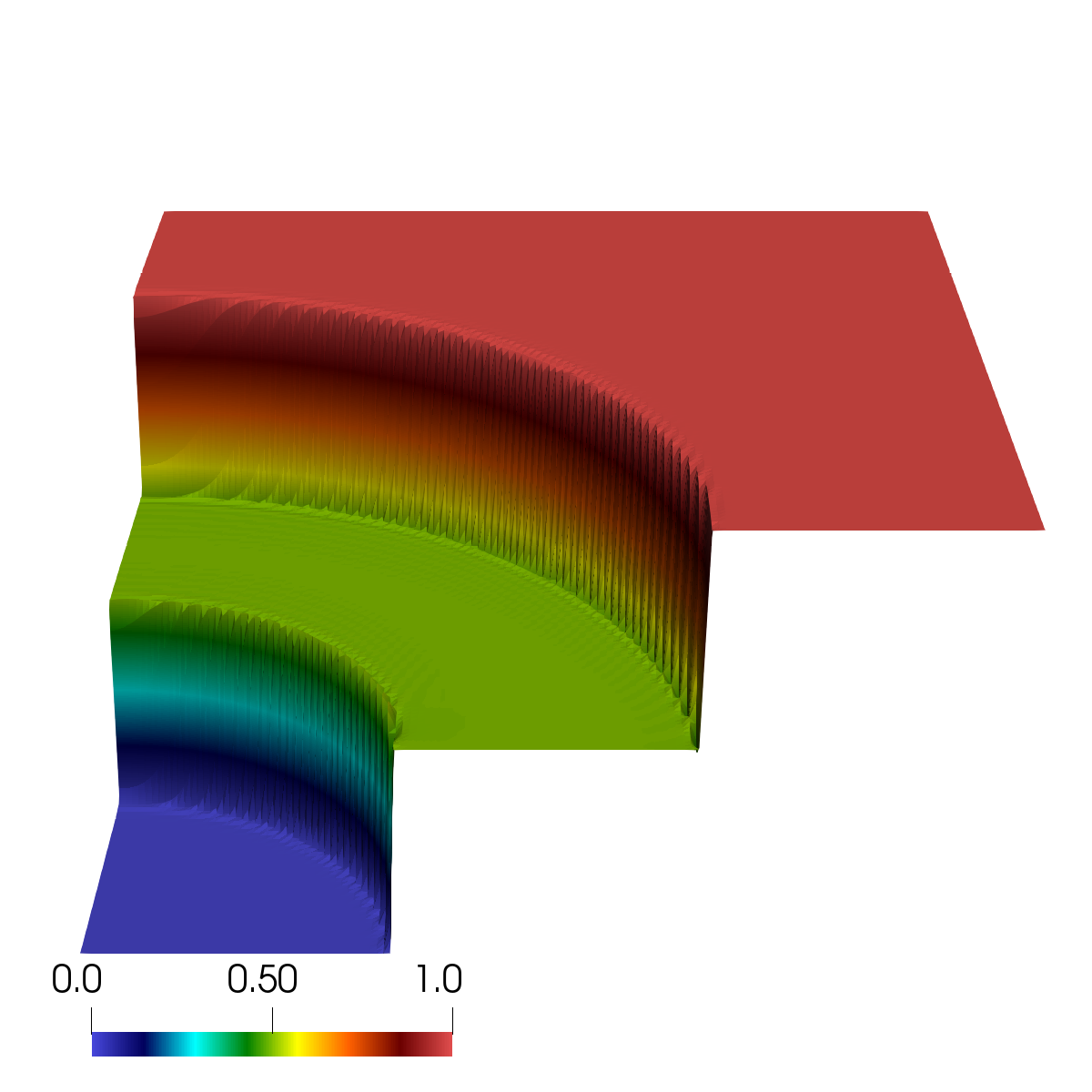}
  	}
  	\vspace{-10pt}
  	\\
  	\subfloat[Cross-section taken using $\mathbb{Q}_{1}$ elements.]{
  		\includegraphics[width=0.4\textwidth]{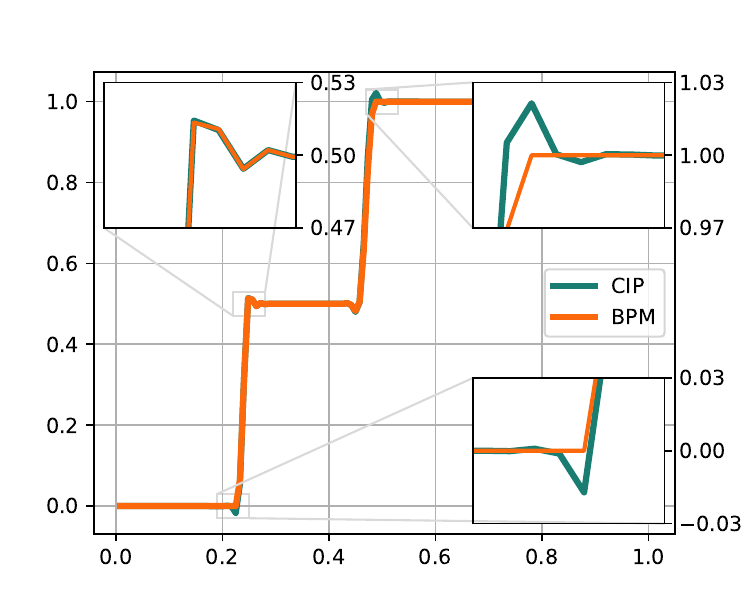}
  	}
  	\subfloat[Cross-section taken using $\mathbb{Q}_{2}$ elements.]{
  		\includegraphics[width=0.4\textwidth]{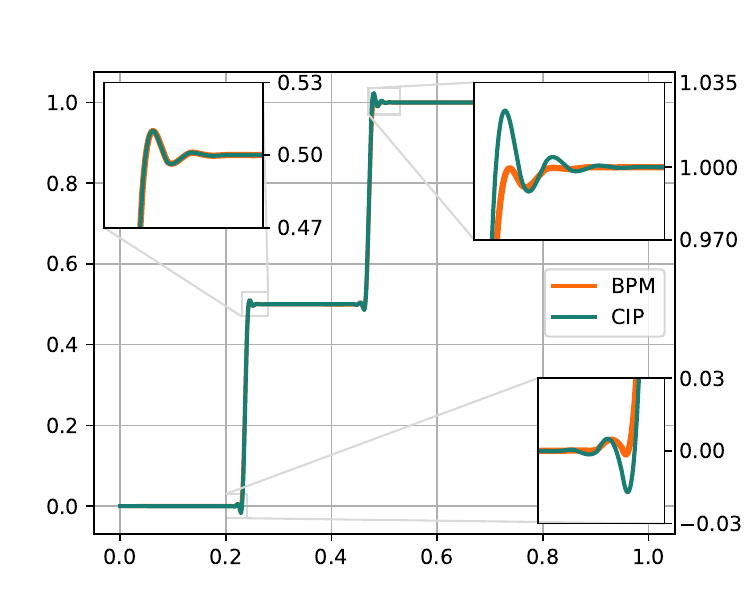}
  	}
  	\caption{ The approximation of the
          solution of Example \ref{Example2} by the bound preserving method (BPM), using $\mathbb{Q}_{2}$ and
          $\mathbb{Q}_{1}$ elements and
          the mesh given in Figure \ref{Q11} with
          $N=129$. Cross-sections taken about $y=x$ plane of the
          solution of the BPM and CIP. For BPM and CIP the penalty
          (\ref{eq121}) $\gamma_{\bbeta}=0.05$ was used
          ($\omega=0.1$). For plotting the cross-sections with $\mathbb{Q}_{2}$ elements, 10,000 equidistant points were chosen along the line $y=x$, and the values of the approximated solution have been plotted at these points.
  		\label{ex6fig6Q1}
  	}
  \end{figure}
  
  \begin{example}[A problem with an inner and a boundary layer]\label{Example3}
    For this last example we consider $f=0$, $\mu=0$,
    $\epsilon=10^{-5}$,
    $\bbeta=(\cos(\frac{\pi}{3}),\sin(\frac{\pi}{3}))^{T}$, and the
    Dirichlet boundary condition $u=g$ on $\Gamma$, where $g$ is given
    by
    \begin{equation}
      g(x,y)
      =
      \left\{
      \begin{split}
        &1 \quad \text{ if } x = 0 \text{ or } y = 1
        \\
        &0  \quad \text{ otherwise.}
      \end{split}
      \right. 
    \end{equation}    
    The problem consists of propagating a discontinuous entry
    condition to the interior, thus generating an interior layer that
    meets a boundary layer at $y=1$.  We have approximated this
    problem using the meshes depicted in Figures \ref{e11}--\ref{d11}.
    For this experiment, especially the approximation of the outflow
    layer, the best results were provided by the method enhanced with
    the CIP stabilising term \eqref{eq10} and $\gamma=0.01$. So, we
    only report the results obtained for this choice.
  \end{example}
  
  For the iterative method \eqref{iter1} we use $\omega=0.1$, and we
  now report the number of fixed-point iterations needed to
  convergence:
       \begin{table}[H]
       	\centering
       	\begin{tabular}{|c|r|c|c|c|c|c|c|}
       		\hline
       		& $N$	&	5  & 9 & 17 & 33 & 65 & 129 \\ \hline
       		Mesh \ref{e11} & Itr. 	&	109  &  143& 177 & 212 & 249 & 249 \\ \hline
       		Mesh \ref{d11} & Itr. 	&123	 & 152& 186  &  218 & 245&  240 \\ \hline
       	\end{tabular}
       	\caption{Iterations needed to reach convergence using $\mathbb{P}_{1}$ elements and the meshes
       		given in Figures \ref{e11}--\ref{d11}, and the penalty term
       		\eqref{eq10} with $\gamma=0.01$ ($\omega=0.1$).}\label{tab2231}
       \end{table}
       
       \begin{table}[H]
       	\centering
       	\begin{tabular}{|c|r|c|c|c|c|c|c|}
       		\hline
       		& $N$	&	5  & 9 & 17 & 33 & 65 & 129 \\ \hline
       		$\mathbb{Q}_{1}$ & Itr. 	&156 &  226& 225 & 308 & 310 & 322 \\ \hline
       		$\mathbb{Q}_{2}$ & Itr. 	&	375 & 299&  291  &  270 &236 &  217 \\ \hline
       	\end{tabular}
       	\caption{Iterations needed to reach convergence using $\mathbb{Q}_{1}$ and $\mathbb{Q}_{2}$ elements and the mesh
       		given in Figure \ref{Q11}, and the penalty term
       		\eqref{eq121} with $\gamma_{\beta}=0.01$ ($\omega=0.1$).}\label{tab223113}
       \end{table}

  We now validate the statement made in Remark~\ref{Rem-uh-}, by
  depicting in Figure~\ref{Fig5} elevations of $u_h^-$ using $\mathbb{P}_{1}$ elements. 
  In Figure~\ref{Fig5} we depict a zoom near the boundary of the cross-section
  along the line $x=0.9$ of $u_h^-$ for different levels of refinment. We can
  observe that as the mesh gets refined,  the magnitute of $u_h^-$ decreases slowly, and the
  support of $u_h^-$ gets more and more localised, confirming what is
  stated in Remark~\ref{Rem-uh-}.
  
  Finally, in Figures \ref{fig:elevation}-\ref{fig:elevation3} we depict the approximate solutions
  using BPM, AFC, and CIP methods for this problem as well as cross
  sections showing the nature of the interior layer. We also present
  cross-sections of $u_h^+$ across $y=1-x$ for Mesh
  \ref{e11}, and $N=129$. Once again, we have performed comparisons
  with the linear CIP method (with the stabilising term \eqref{eq10}
  and $\gamma=0.01$), and the AFC scheme as described in the last
  example.  The results are depicted in Figure
  \ref{fig:elevation}-\ref{fig:elevation3}. Similar comments can be made about both sets of
  results, namely, that the current method removes the oscillations
  from the CIP solution successfully, while presenting a similar behaviour
  to AFC in terms of sharpness of the layers.


\begin{figure}[h!]
	\centering
	\subfloat[Cross-section taken of $u_h^-$ along $x = 0.9$.]{ \includegraphics[width=0.44\textwidth]{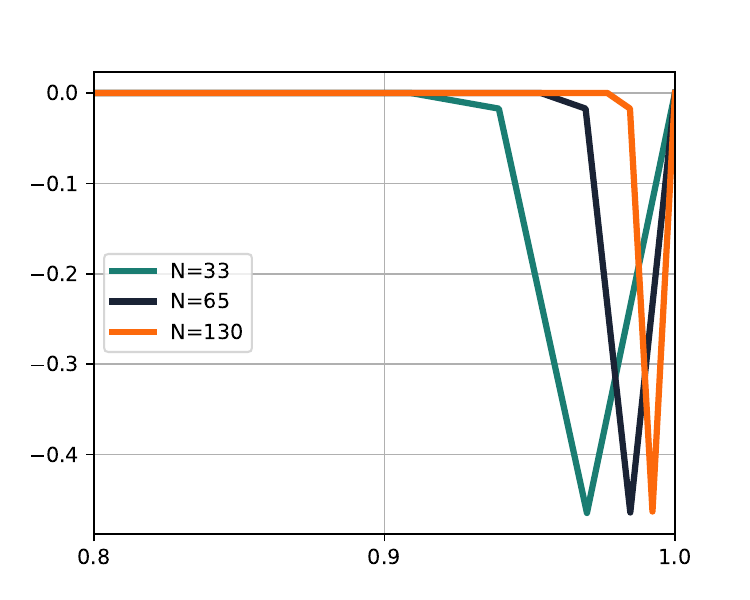}
	}
	\caption{ 
        Cross-sections of $u_h^-$ for
          Example \ref{Example3} illustrating the behaviour at the
         boundary layers using
          $\mathbb{P}_{1}$ elements and the mesh
          given in Figures \ref{e11}.}\label{Fig5}
\end{figure}

  \begin{figure}[h!]
  	\centering
  	\vspace{-10pt}
  	\subfloat[AFC approximation using mesh \ref{e11}]{
  		\includegraphics[width=0.40\textwidth]{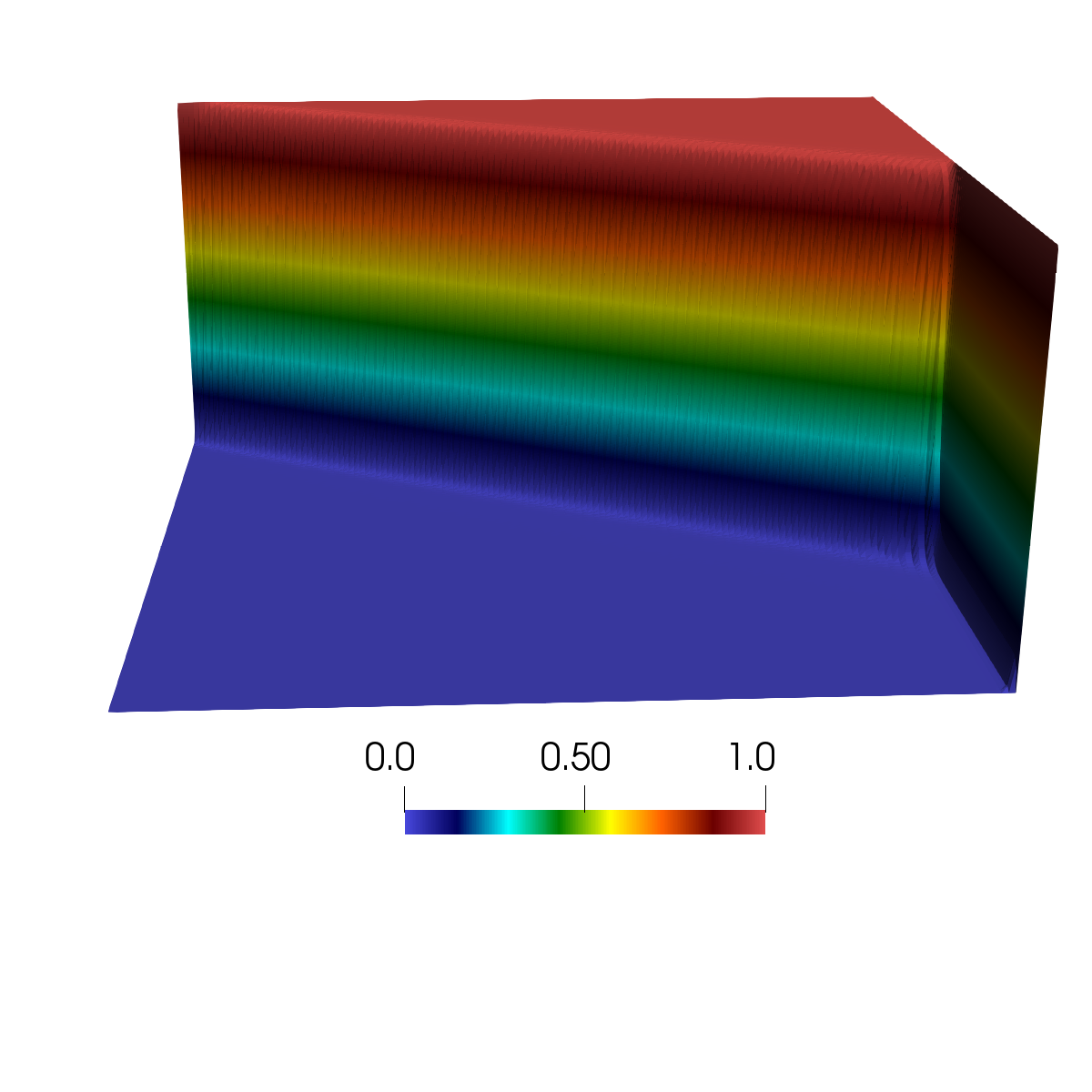}
  	}
  	\subfloat[CIP approximation using mesh \ref{e11}]{
  		\includegraphics[width=0.40\textwidth]{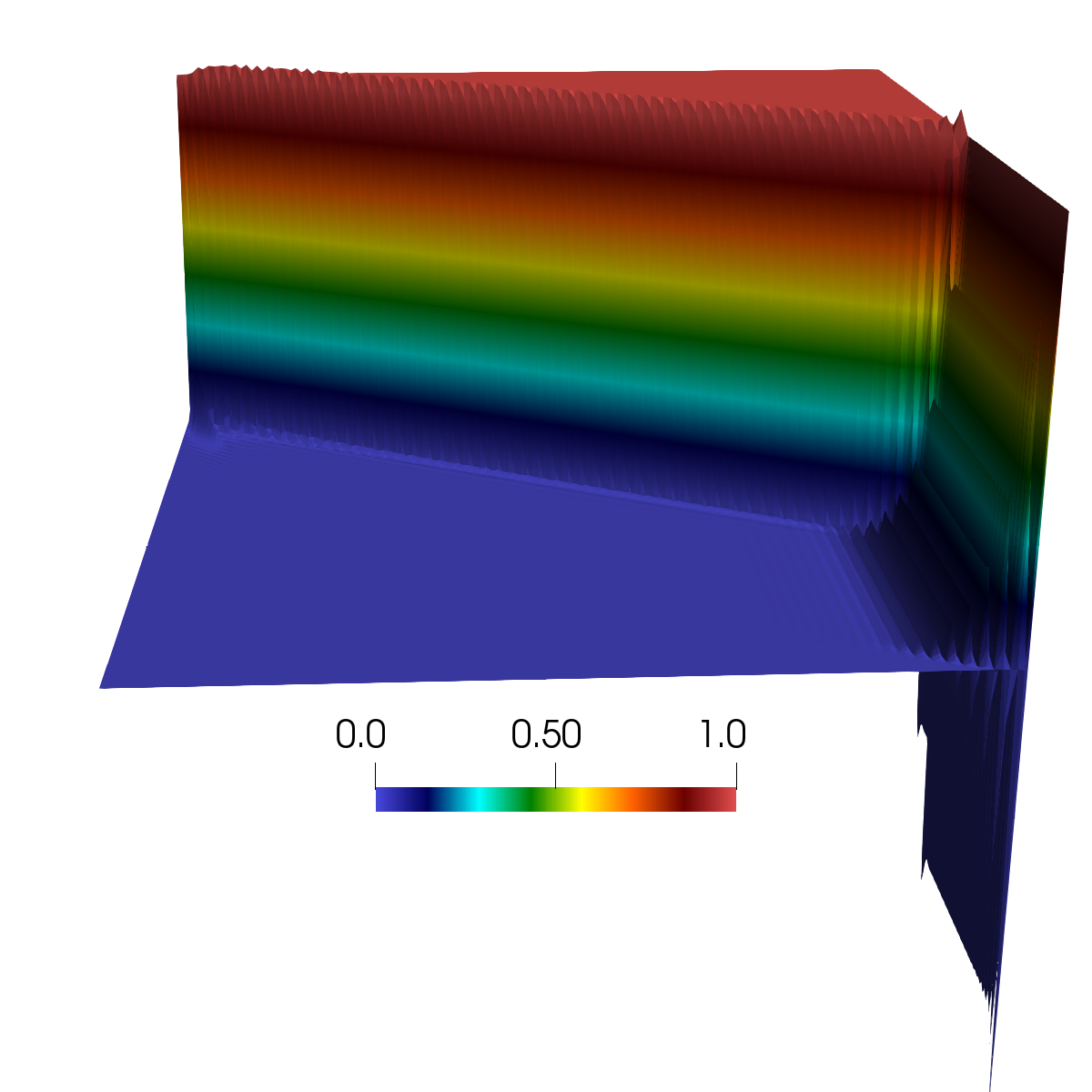}
  	}
  	\vspace{-10pt}
  	\\
  	\subfloat[BPM approximation using mesh  \ref{e11}]{
  		\includegraphics[width=0.4\textwidth]{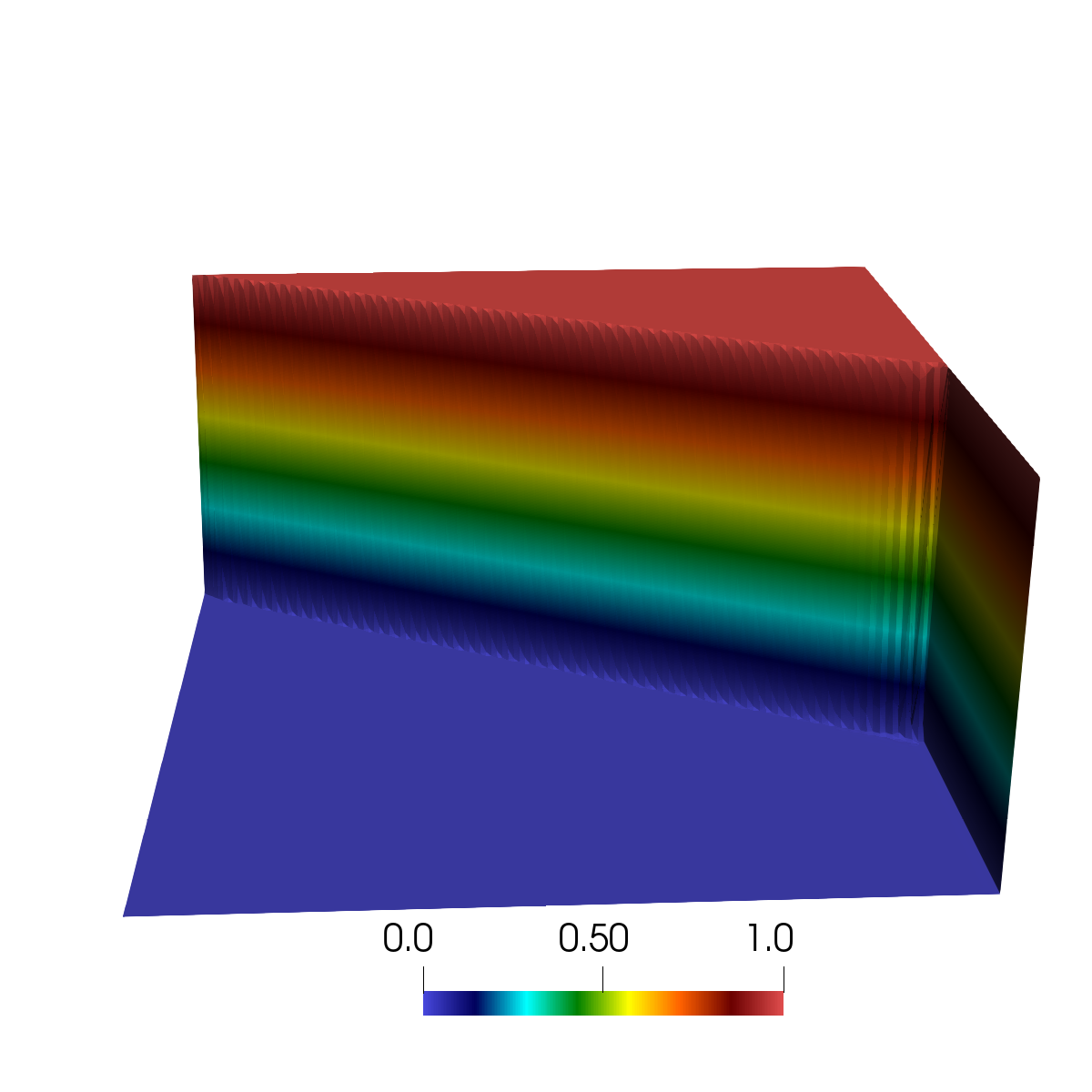}
  	}
  	\subfloat[BPM approximation using mesh \ref{d11}]{
  		\includegraphics[width=0.4\textwidth]{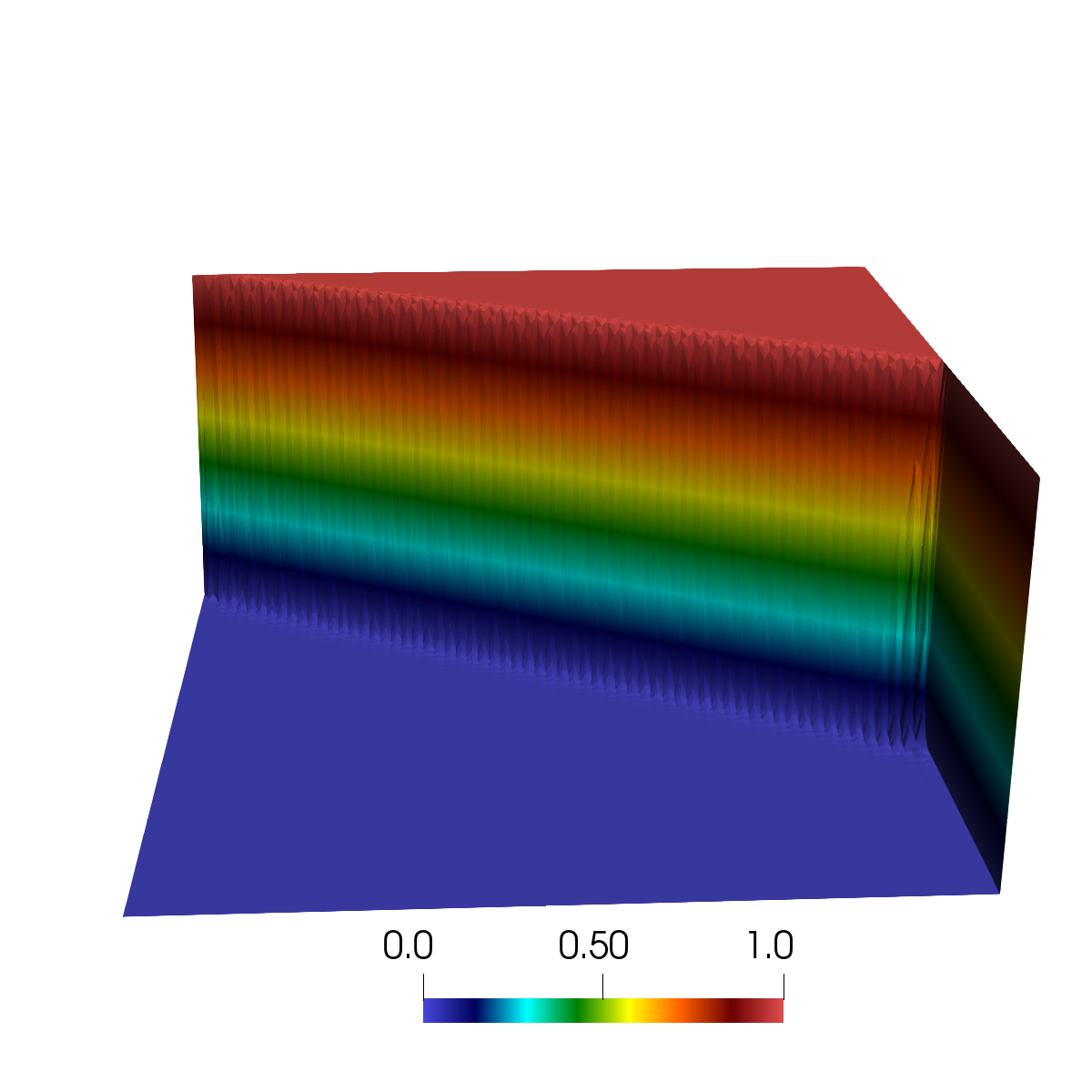}
  	}
  	\vspace{-10pt}
  	\\
  	\subfloat[Cross-section using mesh \ref{e11}]{
  		\includegraphics[width=0.4\textwidth]{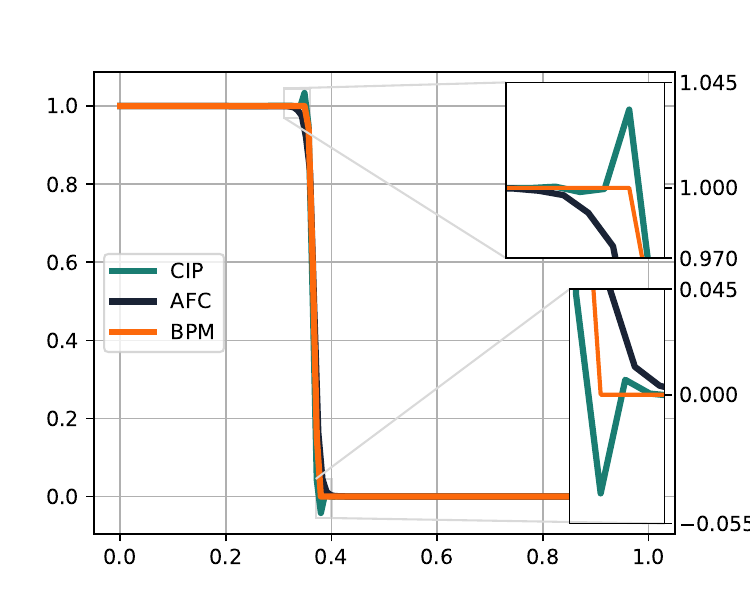}
  	}
  	\subfloat[Cross-section using mesh \ref{d11}]{
  		\includegraphics[width=0.4\textwidth]{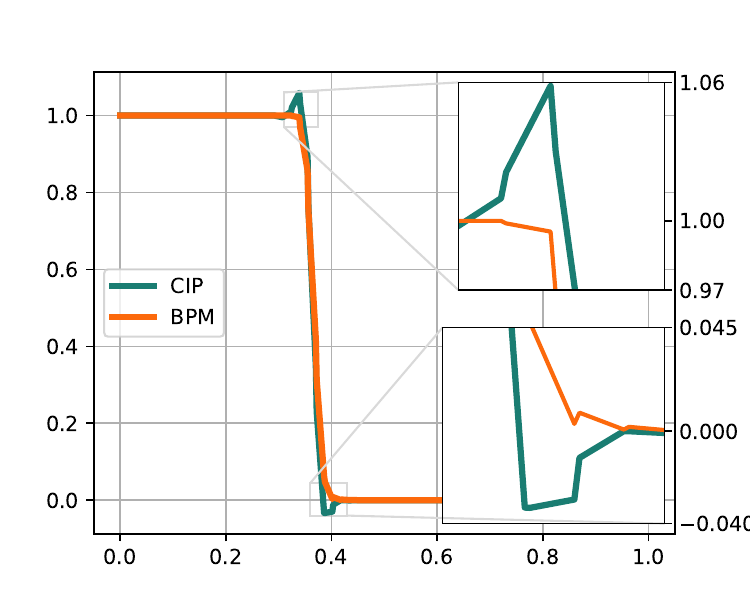}
  	}
  	\caption{The
          approximation of the solution of Example \ref{Example3} by the bound preserving method (BPM), using
          $\mathbb{P}_{1}$ elements and
          the meshes given in Figures \ref{e11} and \ref{d11} with
          $N=129$. Cross-sections taken about $y=x$ plane of the
          solution of the BPM, CIP and AFC. For AFC $p=8$ and for BPM
          and CIP the penalty (\ref{eq121}) $\gamma_{\bbeta}=0.01$ was
          used ($\omega=0.1$). For plotting the cross-sections we used linear interpolation between the nodes. }\label{fig:elevation}
  \end{figure}
  \begin{figure}[h!]
  	\centering
  	\vspace{-10pt}
  	\subfloat[CIP approximation using mesh \ref{e11}]{
  		\includegraphics[width=0.4\textwidth]{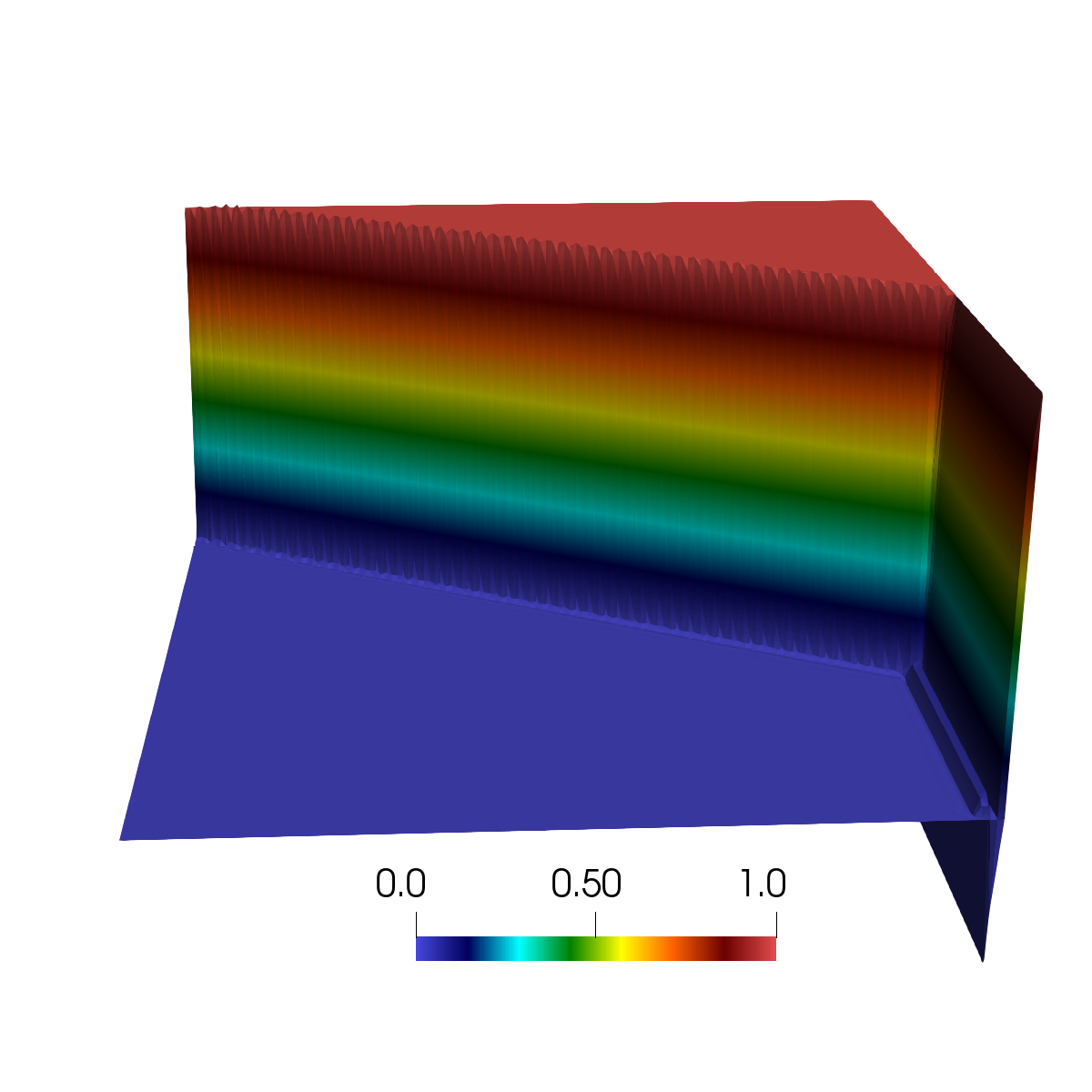}
  	}
  	\subfloat[CIP approximation using mesh \ref{d11}]{
  		\includegraphics[width=0.40\textwidth]{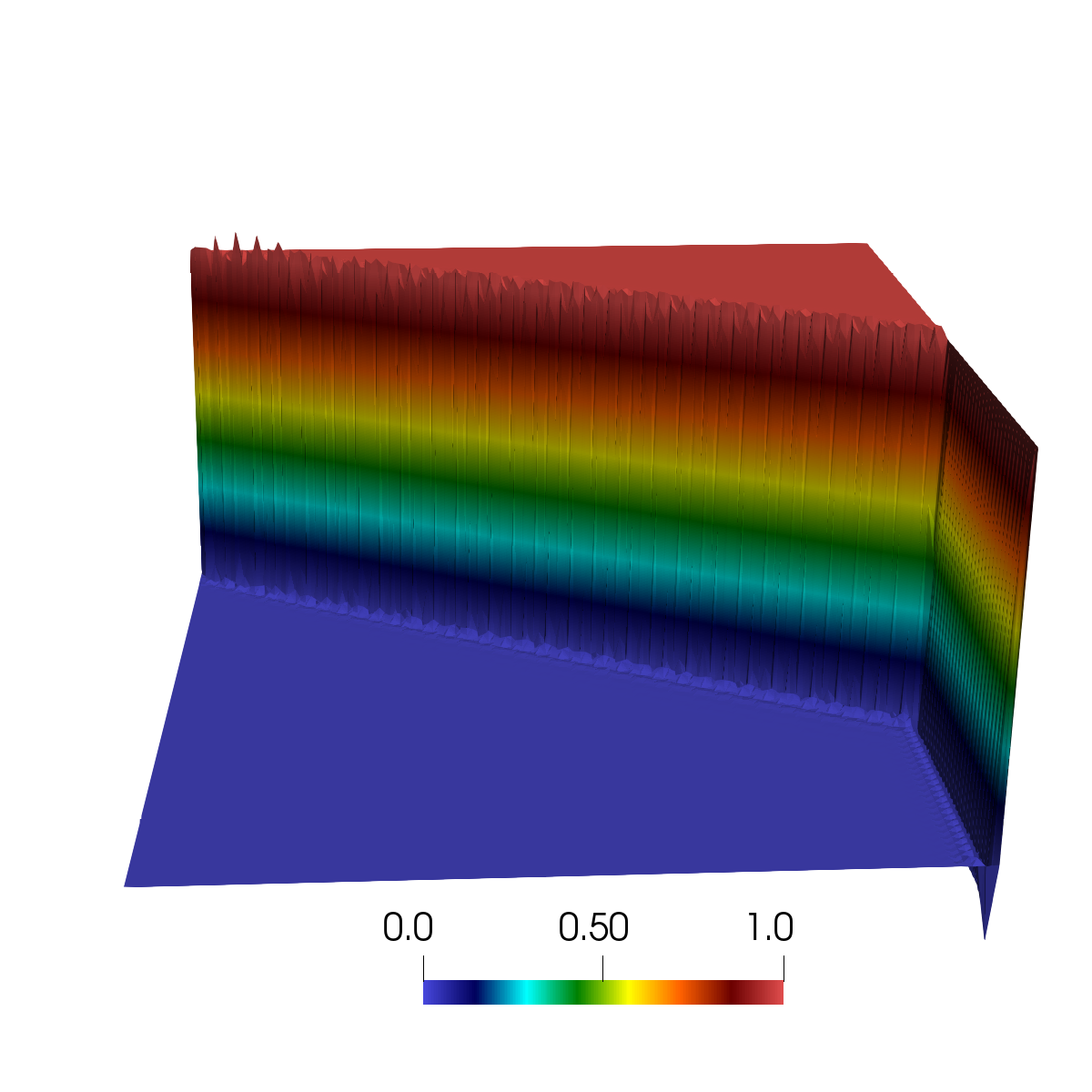}
  	}
  	\vspace{-10pt}
  	\\
  	\subfloat[BPM approximation using mesh \ref{e11}]{
  		\includegraphics[width=0.4\textwidth]{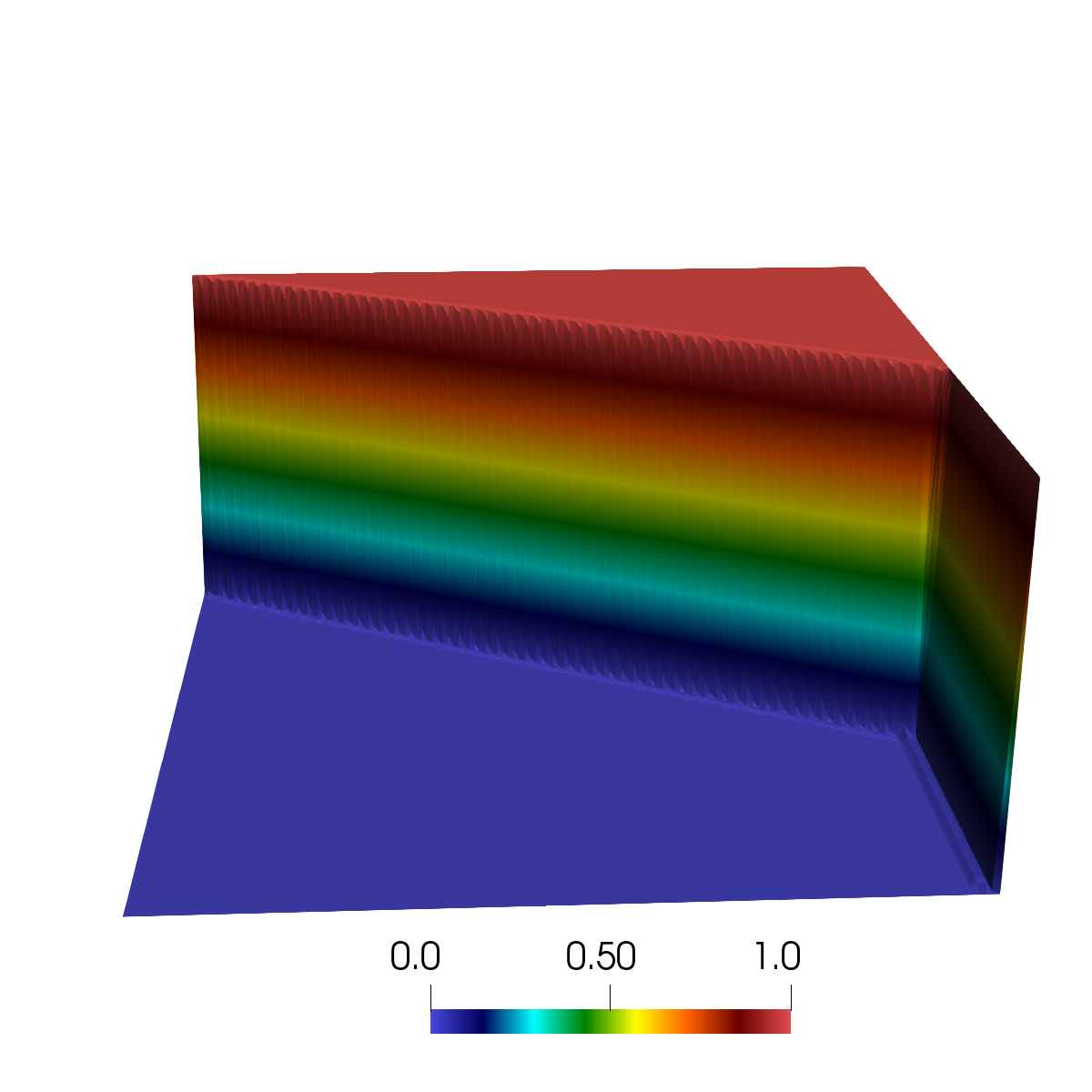}
  	}
  	\subfloat[BPM approximation using mesh \ref{d11}]{
  		\includegraphics[width=0.4\textwidth]{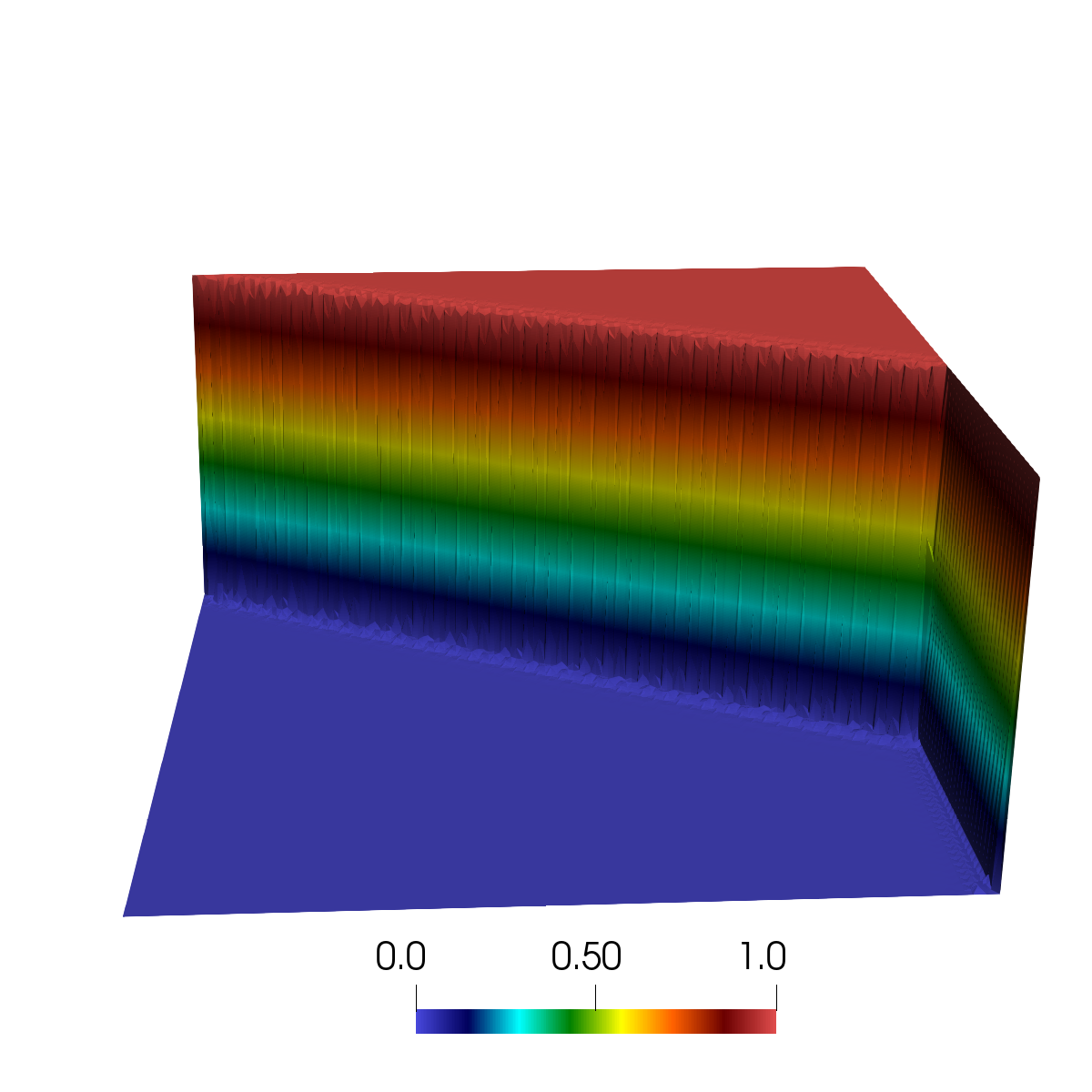}
  	}
  	\vspace{-10pt}
  	\\
  	\subfloat[Cross-section using mesh \ref{e11}]{
  		\includegraphics[width=0.4\textwidth]{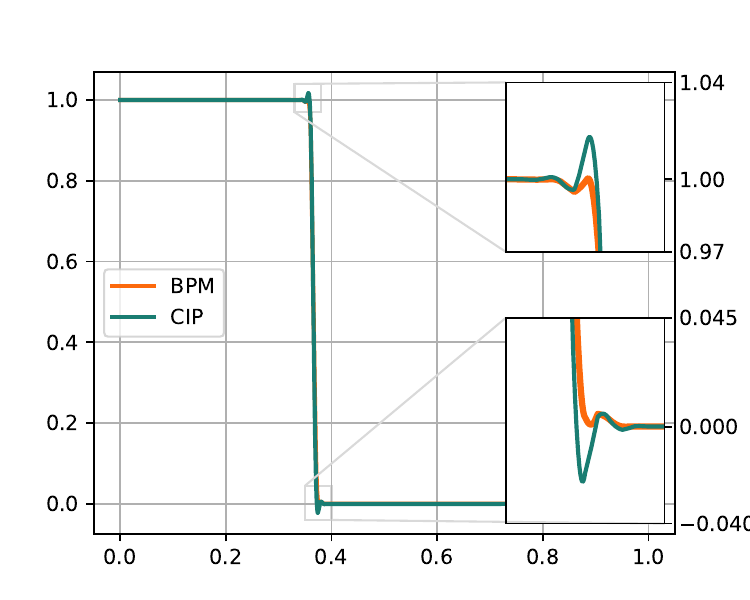}
  	}
  	\subfloat[Cross-section using mesh \ref{d11}]{
  		\includegraphics[width=0.4\textwidth]{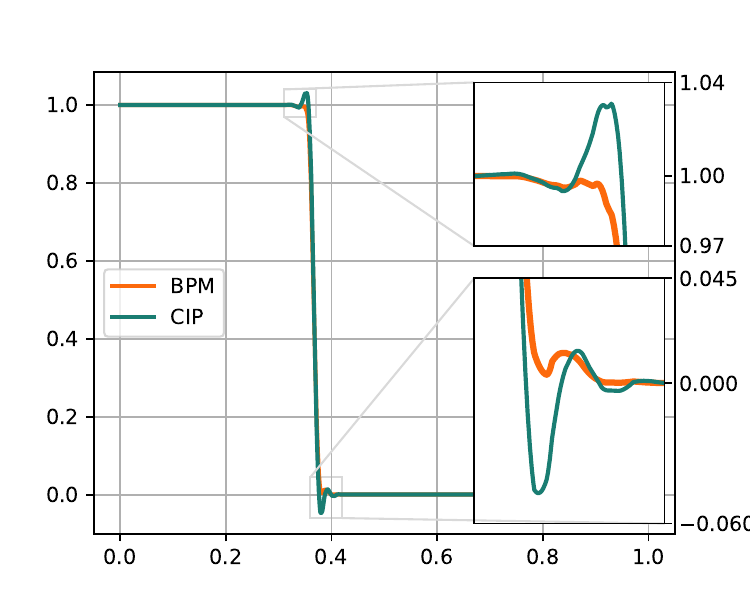}
  	}
  	\caption{The
          approximation of the solution of Example \ref{Example3} by the bound preserving method (BPM), using $\mathbb{P}_{2}$ elements
          and the meshes given in Figures \ref{e11} and \ref{d11} with
          $N=129$. Cross-sections around the line $y=x$  of the
          solution of the BPM and CIP methods. For both methods the penalty
          (\ref{eq121}) with $\gamma_{\bbeta}=0.01$ was used
          ($\omega=0.1$). 
        For plotting these cross-sections, 10,000 equidistant points were chosen along the line $y=x$, and the values of the approximated solution have been plotted at these points. }\label{fig:elevation1}
  \end{figure}

  \begin{figure}[h!]
  	\centering
  	\subfloat[CIP approximation using mesh \ref{Q11}.]{
  		\includegraphics[width=0.40\textwidth]{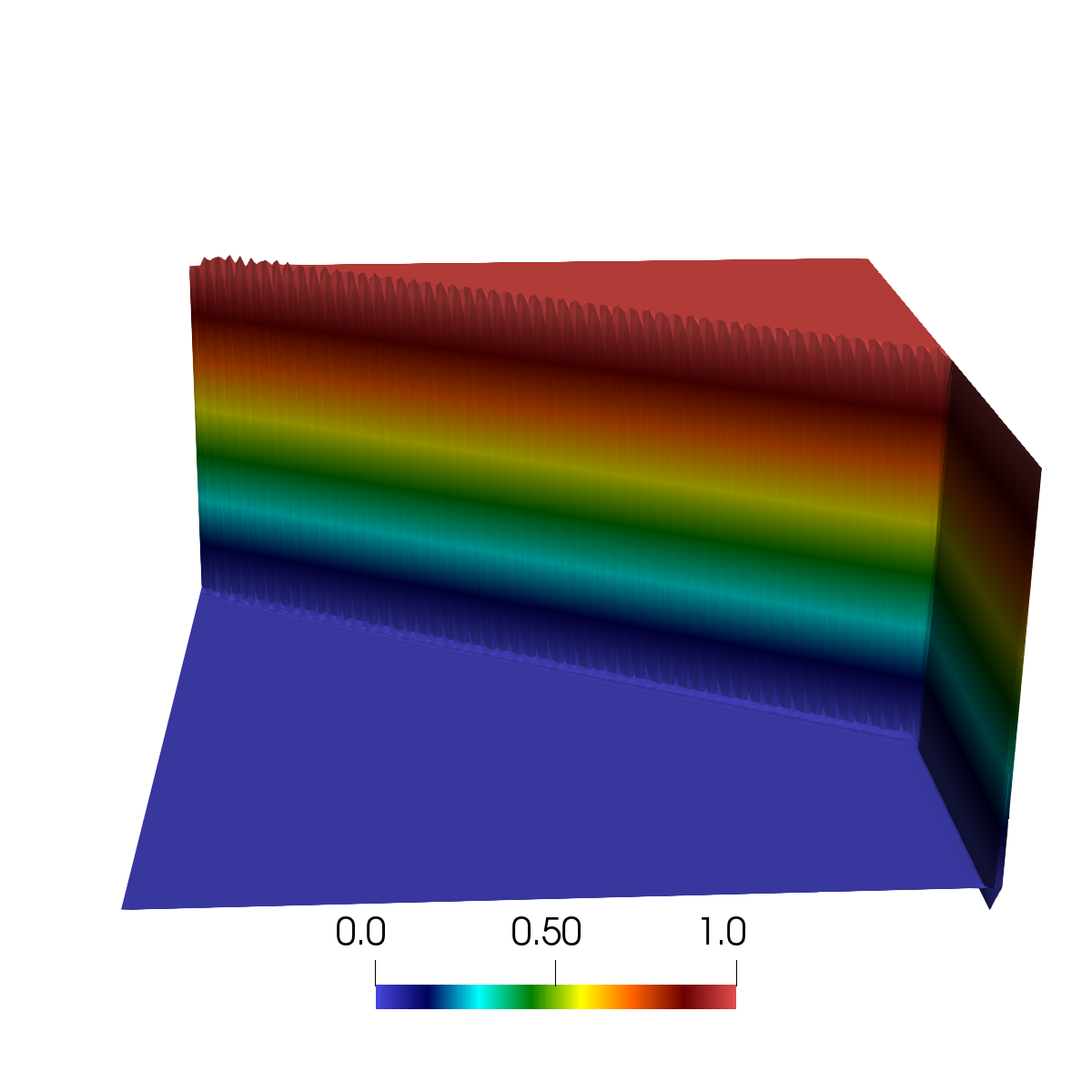}
  	}
  	\subfloat[BPM approximation using mesh \ref{Q11}.]{
  		\includegraphics[width=0.40\textwidth]{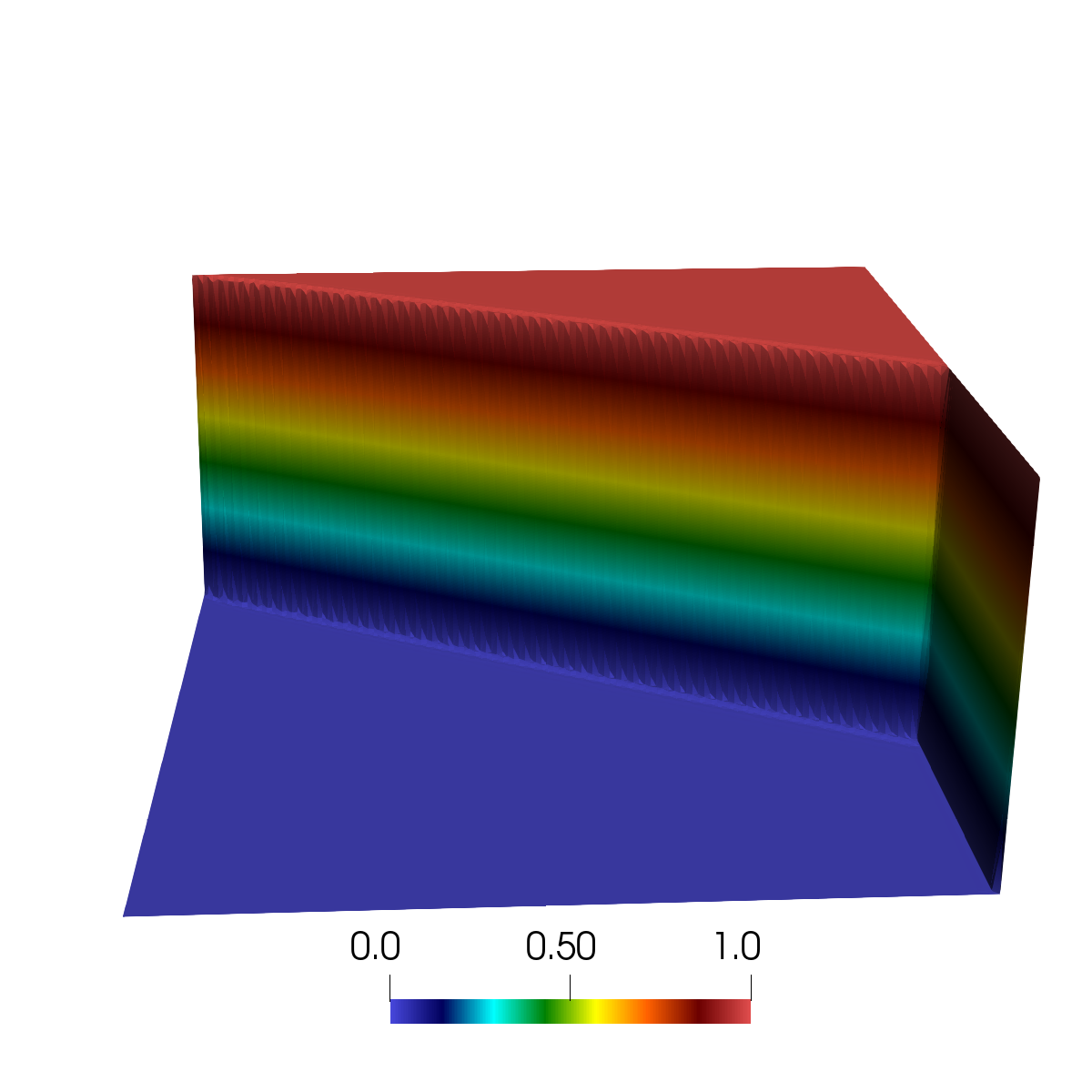}
  	}
  	\\
  	\subfloat[Cross-section taken using mesh \ref{Q11}.]{
  		\includegraphics[width=0.4\textwidth]{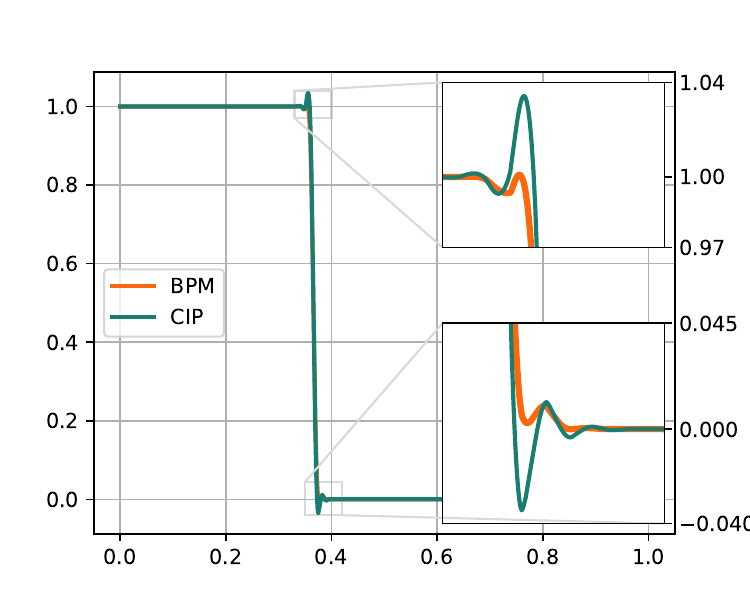}
  	}
  	\caption{The approximation of the solution of Example
  		\ref{Example3} by the bound preserving method (BPM), using $\mathbb{Q}_{2}$ elements and the
  		mesh given in Figure \ref{Q11} with
  		$N=129$. Cross-sections  of the solution
  		of the BPM and CIP taken about the line $y=x$.  For BPM and CIP the
  		penalty (\ref{eq121}) $\gamma_{\bbeta}=0.01$ was used ($\omega=0.1$). For plotting this cross-section, 10,000 equidistant points were chosen along the line $y=x$, and the values of the approximated solution have been plotted at these points.
  	}\label{fig:elevation3}
  \end{figure}

\clearpage
\section{Conclusions and outlook} 	
\label{sec:conc}

The method proposed in this work constitutes an inexpensive and simple
way to impose global bounds in the numerical solution to
convection-diffusion equations (at least at the nodes of the
triangulation, and globally for the lowest order case $k=1$).  Other
than its well-posedness, we proved optimal order error estimates for
the constrained part $u_h^+$.  It is worth stressing that
optimal-order error estimates are not common for this type of method
(even in the piecewise linear case, see \cite{BJK23} for more
details), so we believe this result constitutes a highlight of this
paper.  The numerical experiments presented show that it constitutes a
competitive alternative to previously existing alternatives.

Looking forward, the complementary part $u_h^-$ has a localised
support. This has been checked numerically, but the important question
about whether this fact can be exploited in a posteriori error
analysis remains an open question. In addition, the generality of this
approach makes the applications of the method to unsteady nonlinear
fluids problems, for example of porous media and Allen-Cahn type,
quite natural. These, amongst other areas, are topics of ongoing
research that shall be reported in upcoming publications.

\paragraph{Acknowledgements}
The work of AA, GRB, and TP has been partially supported by the
Leverhulme Trust Research Project Grant No. RPG-2021-238.  TP is also
partially supported by EPRSC grants
\href{https://gow.epsrc.ukri.org/NGBOViewGrant.aspx?GrantRef=EP/W026899/2}
     {EP/W026899/2},
\href{https://gow.epsrc.ukri.org/NGBOViewGrant.aspx?GrantRef=EP/X017206/1}
     {EP/X017206/1}
and
\href{https://gow.epsrc.ukri.org/NGBOViewGrant.aspx?GrantRef=EP/X030067/1}
     {EP/X030067/1}. The authors also want to thank Emmanuil Geourgoulis
and Andreas Veeser for numerous very helpful and stimulating discussions.
		
\bibliography{refs.bib}
	
%
%
	
\end{document}